\documentclass[10pt, reqno,amsmath,amsthm,amssymb,amscd]{amsart}
\usepackage{mathrsfs,amssymb, amscd,amsmath,amsthm}
\usepackage[enableskew,vcentermath]{youngtab}
\usepackage{multicol}\multicolsep=0pt
\usepackage{tikz}
\newcommand{\END}{\mathcal{E}nd}
\newcommand{\clr}{rgb:black,1;blue,4;red,1}
\newcommand{\wdot}{ node[circle, draw, fill=white, thick, inner sep=0pt, minimum width=4pt]{}}
\newcommand{\bdot}{ node[circle, draw, fill=\clr, thick, inner sep=0pt, minimum width=4pt]{}}

\newcommand{\ob}[1]{\mathsf{#1}}
\newcommand{\up}{\uparrow}
\newcommand{\down}{\downarrow}
\newcommand{\OBC}{\mathcal{OBC}}
\newcommand{\AOBC}{\mathcal{AOBC}}

\newcommand{\OB}{\mathcal{OB}}

\newcommand{\wrd}{\langle\up,\down\rangle}
\newcommand{\fulldot}{
    \begin{tikzpicture}[color=\clr]
    \draw (0,0) \bdot;
    \end{tikzpicture}
}
\newcommand{\emptydot}{
    \begin{tikzpicture}[color=\clr]
    \draw (0,0) \wdot;
    \end{tikzpicture}
}
\newcommand{\undot}[1]{\operatorname{undot}({#1})}
\newcommand{\p}[1]{|{#1}|}

\newcommand{\lcap}{
\begin{tikzpicture}[baseline = 3pt, scale=0.5, color=\clr]
        \draw[-,thick] (1,0) to[out=up, in=right] (0.53,0.5) to[out=left, in=right] (0.47,0.5);
        \draw[->,thick] (0.49,0.5) to[out=left,in=up] (0,0);
\end{tikzpicture}
}
\newcommand{\lcup}{
\begin{tikzpicture}[baseline = 6pt, scale=0.5, color=\clr]
        \draw[-,thick] (1,1) to[out=down, in=right] (0.53,0.5) to[out=left, in=right] (0.47,0.5);
        \draw[->,thick] (0.49,0.5) to[out=left,in=down] (0,1);
\end{tikzpicture}
}

\newcommand{\rcup}{
\begin{tikzpicture}[baseline = 6pt, scale=0.5, color=\clr]
        \draw[<-,thick] (1,1) to[out=down, in=right] (0.53,0.5) to[out=left, in=right] (0.47,0.5);
        \draw[-,thick] (0.49,0.5) to[out=left,in=down] (0,1);
\end{tikzpicture}
}
\newcommand{\swap}{
\begin{tikzpicture}[baseline = 3pt, scale=0.5, color=\clr]
        \draw[->,thick] (0,0) to[out=up, in=down] (1,1);
        \draw[->,thick] (1,0) to[out=up, in=down] (0,1);
\end{tikzpicture}
}

\newcommand{\dswap}{
\begin{tikzpicture}[baseline = 3pt, scale=0.5, color=\clr]
        \draw[<-,thick] (0,0) to[out=up, in=down] (1,1);
        \draw[<-,thick] (1,0) to[out=up, in=down] (0,1);
\end{tikzpicture}
}

\newcommand{\xdot}{
\begin{tikzpicture}[baseline = 3pt, scale=0.5, color=\clr]
        \draw[->,thick] (0,0) to[out=up, in=down] (0,1);
        \draw (0,0.4)\bdot;
\end{tikzpicture}
}

\newcommand{\xdotk}{
\begin{tikzpicture}[baseline = 3pt, scale=0.5, color=\clr]
        \draw[->,thick] (0,0) to[out=up, in=down] (0,1);
        \draw (0,0.4)\bdot;
        \draw(0.4,0.4)node{\tiny $k$};
\end{tikzpicture}
}

\newcommand{\xdotr}{
\begin{tikzpicture}[baseline = 3pt, scale=0.5, color=\clr]
        \draw[<-,thick] (0,0) to[out=up, in=down] (0,1);
        \draw (0,0.4)\bdot;
\end{tikzpicture}
}

\newcommand{\cldot}{
\begin{tikzpicture}[baseline = 3pt, scale=0.5, color=\clr]
        \draw[->,thick] (0,0) to[out=up, in=down] (0,1);
        \draw (0,0.4)\wdot;
\end{tikzpicture}
}

\def\SUM#1#2{{\mbox{$\sum\limits_{#1}^{#2}$}}}

\hoffset -25truemm              
\oddsidemargin=30truemm         
\textwidth=155truemm            
\voffset -25truemm              
\topmargin=25truemm             
\textheight=239truemm           
\relpenalty=10000
\binoppenalty=10000
\tolerance=500
 \providecommand{\og}{``}
\providecommand{\fg}{''} \providecommand{\smfandname}{and}



\def\sc{\scriptstyle}

\usepackage{amssymb}
\baselineskip15pt
\usepackage{mathrsfs,amssymb}
\usepackage{multicol}\multicolsep=0pt
\usepackage[enableskew,vcentermath]{youngtab}

\usepackage[sort]{cite}
\usepackage{xcolor,graphicx}

\def\crulefill{\leavevmode\leaders\hrule height 1pt\hfill\kern 0pt}
\long\def\QUERY#1{%
\leavevmode\newline%
\noindent$\star\star\star$\thinspace\textsf{Comment/Query}\crulefill\newline%
   \space #1\newline\hbox to 120mm{\crulefill}$\star\star\star$\newline}
\newtheorem{Theorem}{Theorem}[section]
\newtheorem{Lemma}[Theorem]{Lemma}
\newtheorem{Cor}[Theorem]{Corollary}
\newtheorem{Prop}[Theorem]{Proposition}

\setcounter{section}{0} \theoremstyle{definition}

\newtheorem{Defn}[Theorem]{Definition}

\newtheorem{conj}[Theorem]{Conjecture}
\newtheorem{rem}[Theorem]{Remark}

\numberwithin{equation}{section}
\theoremstyle{definition}


\makeatletter
\def\enumerate{\begingroup\ifnum\@enumdepth>3\@toodeep\else
      \advance\@enumdepth\@ne
      \edef\@enumctr{enum\romannumeral\the\@enumdepth}%
      \topsep\z@\parskip\z@
      \list{\csname label\@enumctr\endcsname}
        {\@nmbrlisttrue\let\@listctr\@enumctr
         \parsep\z@\itemsep\z@\topsep\z@
         \setcounter{\@enumctr}{0}
         \def\makelabel##1{\hss\llap{\rm ##1}}
       }\fi}

\makeatother

\let\bar=\overline
\let\epsilon=\varepsilon
\def\({\big(}
\def\){\big)}

\def\C{\mathbb C}

\def\Z{\mathbb Z}

\def\0{\underline{0}}

\DeclareMathOperator{\End}{End}


\def\Hom{\text{Hom}}

\def\U{\mathbf U}


{\catcode`\|=\active
  \gdef\set#1{\mathinner{\lbrace\,{\mathcode`\|"8000%
                                   \let|\midvert #1}\,\rbrace}}
  \gdef\seT#1{\mathinner{\Big\lbrace\,{\mathcode`\|"8000%
                                   \let|\midverT #1}\,\Big\rbrace}}
}
\def\midvert{\egroup\mid\bgroup}
\def\midverT{\egroup\,\Big|\,\bgroup}

\def\Set[#1]#2|#3|{\Big\{\ #2\ \Big| \
           \vcenter{\hsize #1mm\centering #3}\Big\}}





\def\d{\delta}

\def\l{\lambda}

\def\sc{\scriptstyle}

\def\dis{\displaystyle}




\def\Z{\mathbb{Z}}

\def\C{\mathbb{C}}

\def\es{\varepsilon}

\def\Hom{{\rm Hom}}

\def\OT#1{{\raisebox{-8.5pt}{$\stackrel{{\mbox{\Large$\dis\otimes$}}}{\sc#1}$}}}

\def\mfg{{\mathfrak g}}
\def\fh{{\mathfrak h}}

\def\Set{{\rm Set}}

\def\OTIMES{{{\sc\!}\otimes{\sc\!}}}

\def\Hom{\text{Hom}}%
\def\U{\mathbf U}%
\def\textsf#1{{\textit{#1}}}%



\begin{document}
\title{A proof of Comes-Kujawa's conjecture }
\author{ MengMeng Gao, Hebing Rui, Linliang Song, Yucai Su}
\address{M.G.  Department  of Mathematics, East China Normal University,  Shanghai, 2000240, China}\email{g19920119@163.com}
\address{H.R.  School of Mathematical Science, Tongji University,  Shanghai, 200092, China}\email{hbrui@tongji.edu.cn}
\address{L.S.  School of Mathematical Science, Tongji University,  Shanghai, 200092, China}\email{llsong@tongji.edu.cn}
\address{Y.S.  School of Mathematical Science, Tongji University,  Shanghai, 200092, China}\email{ycsu@tongji.edu.cn}
\thanks{H. Rui is supported  partially by NSFC (grant No.  11571108).  L. Song is supported  partially by NSFC (grant No.  11501368).  Y. Su is supported partially by NSFC (grant No. 11431010)}

\date{\today}
\sloppy \maketitle

\begin{abstract}Let $\kappa$ be a commutative ring containing $2^{-1}$. In this paper, we prove the
Comes-Kujawa's  conjecture on a $\kappa$-basis  of   cyclotomic oriented Brauer-Clifford supercategory.
As a by-product, we prove that the cyclotomic walled Brauer-Clifford superalgebra  defined by  Comes and Kujawa and ours are isomorphic if $\kappa$ is  an algebraically closed field with characteristic not two.
\end{abstract}

\section{Introduction}
The affine walled Brauer-Clifford supercategory and its cyclotomic quotients are introduced by Comes and Kujawa \cite{CK}. These supercategories have closed connections with  representations of queer Lie superalgebra $\mathfrak q(n)$, and its associated finite W-superalgebras, etc.
The aim of this paper  is to prove the  Comes-Kujawa's  conjecture on a basis  of   cyclotomic oriented Brauer-Clifford supercategory\cite[Conjecture~7.1]{CK}.

 Before we recall Comes-Kujawa's conjecture, we need some notions in \cite{CK} etc.
  Throughout, we assume that $\kappa$ is  an arbitrary commutative ring   containing $2^{-1}$.

\subsection{The affine walled Brauer-Clifford supercategory}
In this paper, we work over the super world. By definition,   a supermodule is a module  on which there is a   $\mathbb Z_2$-grading. We are going to  freely use the notions of
 $\kappa$-linear (monoidal) supercategories and superfunctors etc. For more details,  we  refer a reader to \cite{BE, CK}  and references therein.

For any two objects $\ob a,\ob b$ in a \emph{strict monoidal supercategory},  $\ob a\ob b$ represents  $\ob a\otimes \ob b$. So  $\ob a^k$ represents $\ob a\otimes \ldots\otimes \ob a$, where there are $k$ copies of $\ob a$ in the tensor product.
 Following  \cite{CK},   a  morphism $g:\ob a\to \ob b$ is drawn as
    $$\begin{tikzpicture}[baseline = 12pt,scale=0.5,color=\clr,inner sep=0pt, minimum width=11pt]
        \draw[-,thick] (0,0) to (0,2);
        \draw (0,1) node[circle,draw,thick,fill=white]{$g$};
        \draw (0,-0.2) node{$\ob a$};
        \draw (0, 2.3) node{$\ob b$};
    \end{tikzpicture}
     \quad\text{ or simply as }
    \begin{tikzpicture}[baseline = 12pt,scale=0.5,color=\clr,inner sep=0pt, minimum width=11pt]
        \draw[-,thick] (0,0) to (0,2);
        \draw (0,1) node[circle,draw,thick,fill=white]{$g$};
    \end{tikzpicture}$$
 if there is no confusion for the objects. Note that $\ob a$ is drawn at the bottom while $\ob b$ is at the top.
 There is   a well-defined  tensor product of  two morphisms such that  $g\otimes h$  is given by horizontal stacking:
\begin{equation*}\begin{tikzpicture}[baseline = 19pt,scale=0.5,color=\clr,inner sep=0pt, minimum width=11pt]
        \draw[-,thick] (0,0) to (0,3);
        \draw[-,thick] (2,0) to (2,3);
        \draw (0,1.5) node[circle,draw,thick,fill=white]{$g$};
        \draw (2,1.5) node[circle,draw,thick,fill=white]{$h$};
    \end{tikzpicture}~.
 \end{equation*}
 To simplify the notation, the $r$-fold tensor products of  $g$ is drawn as
\begin{equation*}
    \begin{tikzpicture}[baseline = 12pt,scale=0.5,color=\clr,inner sep=0pt, minimum width=11pt]
        \draw[-,thick] (0,0) to (0,2);
        \draw (0,1) node[circle,draw,thick,fill=white]{$g$};
    \end{tikzpicture}^r
        ~.
\end{equation*}
 The composition of two morphisms  $g\circ h$ is given by vertical stacking:
\begin{equation*}
        \begin{tikzpicture}[baseline = 19pt,scale=0.5,color=\clr,inner sep=0pt, minimum width=11pt]
        \draw[-,thick] (0,0) to (0,3);
        \draw (0,2.2) node[circle,draw,thick,fill=white]{$g$};
        \draw (0,0.8) node[circle,draw,thick,fill=white]{$h$};
    \end{tikzpicture}
    ~.
\end{equation*}
 Following \cite{CK}, a diagram  involving multiple products is  interpreted by \emph{first composing horizontally, then composing vertically}.
The  \emph{super-interchange law} is :
\[
    (g\otimes h)\circ(k\otimes l)=(-1)^{[h][k]}(g\circ k)\otimes(h\circ l).
\]
where $g, h, k, l$ are homogenous elements and  $[h]$ is the parity of $h$.

\begin{Defn}\cite[Definition~3.7]{CK}\label{aobc}
The \textsf{degenerate affine oriented Brauer-Clifford supercategory} $\AOBC_\kappa$
is  the $\kappa$-linear strict monoidal supercategory generated by two objects $\uparrow, \down$; four even morphisms $\lcup:1\rightarrow\uparrow\down, \ \ \lcap: \down\uparrow\rightarrow1, \ \ \swap: \uparrow\uparrow\rightarrow\uparrow\uparrow,\ \  \xdot:\uparrow\rightarrow\uparrow$; and one odd morphism $\cldot:\uparrow\rightarrow\uparrow$ subject to the following relations:
 \begin{equation}\label{OB relations 1 (symmetric group)}
        \begin{tikzpicture}[baseline = 10pt, scale=0.5, color=\clr]
            \draw[-,thick] (0,0) to[out=up, in=down] (1,1);
            \draw[->,thick] (1,1) to[out=up, in=down] (0,2);
            \draw[-,thick] (1,0) to[out=up, in=down] (0,1);
            \draw[->,thick] (0,1) to[out=up, in=down] (1,2);
        \end{tikzpicture}
        ~=~
        \begin{tikzpicture}[baseline = 10pt, scale=0.5, color=\clr]
            \draw[-,thick] (0,0) to (0,1);
            \draw[->,thick] (0,1) to (0,2);
            \draw[-,thick] (1,0) to (1,1);
            \draw[->,thick] (1,1) to (1,2);
        \end{tikzpicture}
        \ ,\qquad
        \begin{tikzpicture}[baseline = 10pt, scale=0.5, color=\clr]
            \draw[->,thick] (0,0) to[out=up, in=down] (2,2);
            \draw[->,thick] (2,0) to[out=up, in=down] (0,2);
            \draw[->,thick] (1,0) to[out=up, in=down] (0,1) to[out=up, in=down] (1,2);
        \end{tikzpicture}
        ~=~
        \begin{tikzpicture}[baseline = 10pt, scale=0.5, color=\clr]
            \draw[->,thick] (0,0) to[out=up, in=down] (2,2);
            \draw[->,thick] (2,0) to[out=up, in=down] (0,2);
            \draw[->,thick] (1,0) to[out=up, in=down] (2,1) to[out=up, in=down] (1,2);
        \end{tikzpicture}\ ,
    \end{equation}
    \begin{equation}\label{OB relations 2 (zigzags and invertibility)}
        \begin{tikzpicture}[baseline = 10pt, scale=0.5, color=\clr]
            \draw[-,thick] (2,0) to[out=up, in=down] (2,1) to[out=up, in=right] (1.5,1.5) to[out=left,in=up] (1,1);
            \draw[->,thick] (1,1) to[out=down,in=right] (0.5,0.5) to[out=left,in=down] (0,1) to[out=up,in=down] (0,2);
        \end{tikzpicture}
        ~=~
        \begin{tikzpicture}[baseline = 10pt, scale=0.5, color=\clr]
            \draw[-,thick] (0,0) to (0,1);
            \draw[->,thick] (0,1) to (0,2);\
        \end{tikzpicture}\
        ,\qquad
        \begin{tikzpicture}[baseline = 10pt, scale=0.5, color=\clr]
            \draw[-,thick] (2,2) to[out=down, in=up] (2,1) to[out=down, in=right] (1.5,0.5) to[out=left,in=down] (1,1);
            \draw[->,thick] (1,1) to[out=up,in=right] (0.5,1.5) to[out=left,in=up] (0,1) to[out=down,in=up] (0,0);
        \end{tikzpicture}
        ~=~
        \begin{tikzpicture}[baseline = 10pt, scale=0.5, color=\clr]
            \draw[-,thick] (0,2) to (0,1);
            \draw[->,thick] (0,1) to (0,0);
        \end{tikzpicture}\
        ,\qquad
        \begin{tikzpicture}[baseline = 10pt, scale=0.5, color=\clr]
            \draw[-,thick] (2,2) to[out=down, in=up] (2,1) to[out=down, in=right] (1.5,0.5) to[out=left,in=down] (1,1);
            \draw[->,thick] (1,1) to[out=up,in=right] (0.5,1.5) to[out=left,in=up] (0,1) to[out=down,in=up] (0,0);
            \draw[->,thick] (0.7,0) to[out=up,in=down] (1.3,2);
        \end{tikzpicture}
        ~\text{ is invertible},
    \end{equation}
 \begin{equation}\label{OBC relations}
        \begin{tikzpicture}[baseline = 7.5pt, scale=0.5, color=\clr]
            \draw[->,thick] (0,0) to[out=up, in=down] (0,1.5);
            \draw (0,1) \wdot;
            \draw (0,0.5) \wdot;
        \end{tikzpicture}
        ~=~
        \begin{tikzpicture}[baseline = 7.5pt, scale=0.5, color=\clr]
            \draw[->,thick] (0,0) to[out=up, in=down] (0,1.5);
        \end{tikzpicture}\
        ,\qquad
        \begin{tikzpicture}[baseline = 7.5pt, scale=0.5, color=\clr]
            \draw[->,thick] (0,0) to[out=up, in=down] (1,1.5);
            \draw[->,thick] (1,0) to[out=up, in=down] (0,1.5);
            \draw (0.2,0.5) \wdot;
        \end{tikzpicture}
        ~=~
        \begin{tikzpicture}[baseline = 7.5pt, scale=0.5, color=\clr]
            \draw[->,thick] (0,0) to[out=up, in=down] (1,1.5);
            \draw[->,thick] (1,0) to[out=up, in=down] (0,1.5);
            \draw (0.8,1) \wdot;
        \end{tikzpicture}\
        ,\qquad
        \reflectbox{\begin{tikzpicture}[baseline = -2pt, scale=0.5, color=\clr]
            \draw[->,thick] (0,0) to[out=up, in=left] (0.5,0.5);
            \draw[-,thick] (0.4,0.5) to (0.5,0.5) to[out=right,in=up] (1,0) to[out=down, in=right] (0.5,-0.5) to[out=left,in=down] (0,0);
            \draw (0,0) \wdot;
        \end{tikzpicture}}
        ~=0,
    \end{equation}

    \begin{equation}\label{AOBC relations}
        \begin{tikzpicture}[baseline = 7.5pt, scale=0.5, color=\clr]
            \draw[->,thick] (0,0) to[out=up, in=down] (0,1.5);
            \draw (0,1) \bdot;
            \draw (0,0.5) \wdot;
        \end{tikzpicture}
        ~=-~
        \begin{tikzpicture}[baseline = 7.5pt, scale=0.5, color=\clr]
            \draw[->,thick] (0,0) to[out=up, in=down] (0,1.5);
            \draw (0,1) \wdot;
            \draw (0,0.5) \bdot;
        \end{tikzpicture}\
        ,\qquad
        \begin{tikzpicture}[baseline = 7.5pt, scale=0.5, color=\clr]
            \draw[->,thick] (0,0) to[out=up, in=down] (1,1.5);
            \draw[->,thick] (1,0) to[out=up, in=down] (0,1.5);
            \draw (0.2,1) \bdot;
        \end{tikzpicture}
        ~-~
        \begin{tikzpicture}[baseline = 7.5pt, scale=0.5, color=\clr]
            \draw[->,thick] (0,0) to[out=up, in=down] (1,1.5);
            \draw[->,thick] (1,0) to[out=up, in=down] (0,1.5);
            \draw (0.8,0.5) \bdot;
        \end{tikzpicture}
        ~=~
        \begin{tikzpicture}[baseline = 7.5pt, scale=0.5, color=\clr]
            \draw[->,thick] (0,0) to[out=up, in=down] (0,1.5);
            \draw[->,thick] (1,0) to[out=up, in=down] (1,1.5);
        \end{tikzpicture}
        ~-~
        \begin{tikzpicture}[baseline = 7.5pt, scale=0.5, color=\clr]
            \draw[->,thick] (0,0) to[out=up, in=down] (0,1.5);
            \draw[->,thick] (1,0) to[out=up, in=down] (1,1.5);
            \draw (0,0.7) \wdot;
            \draw (1,0.7) \wdot;
        \end{tikzpicture}.
    \end{equation}
\end{Defn}
 Let $\wrd$ be the set of all words in the alphabets $\uparrow, \downarrow$ including
the empty word $\varnothing$. Each word $\ob a_1\ldots \ob a_r$ (resp., empty word $\varnothing$) represents  $\ob a_1\otimes \ldots \otimes \ob a_r$ (resp., the unit object $1$) in $\AOBC_\kappa$. The objects of previous  five morphisms are implicated in the pictures. In fact,  they can be read from the consistent orientation   of each strand.
For example, the objects  at both  the top and the bottom  of $\swap$ are   $\uparrow\uparrow$ since  the orientations of strands at both the top and the bottom  are  up-toward.
It means that $\swap$ is a morphism in $\End_{\AOBC_\kappa}(\uparrow\uparrow)$.
For $\lcup$, there is no endpoint at the bottom. It  means the object at the bottom is the unit object and hence $\lcup\in \Hom_{\AOBC_\kappa}(1,\uparrow\downarrow)$.
Similarly,  $\lcap\in \Hom_{\AOBC_\kappa}(\downarrow\uparrow,1)$ and $\xdot, \cldot\in\End_{\AOBC_\kappa}(\uparrow)$.
Since any morphism $g:\ob a\rightarrow\ob b$ in  $\AOBC_\kappa$ can be expressed as     tensor products and compositions of the five morphisms in Definition~\ref{aobc}, $\ob a,\ob b$ will be omitted when $g$ is drawn as a picture.
Given  an $\ob a\in \wrd$, following \cite{BCNR}, the identity morphism $\mathbf 1_{\ob a}\in \End_{\AOBC_k}(\ob a)$  can be drawn by the object $\ob a$ itself.
For example, $\mathbf 1_{\uparrow\uparrow\downarrow}=\uparrow\uparrow\downarrow$.

By~\cite[Definition~3.2]{CK}, the  oriented Brauer-Clifford supercategory $\OBC_\kappa$  is the subcategory of $\AOBC_\kappa$ generated by  the same objects, and  the previous morphisms except $ \xdot:\uparrow\rightarrow\uparrow$.  The oriented Brauer category $\OB_\kappa$\cite[Theorem~1.1]{BCNR} is the supercategory  generated by  the same objects and  the previous  morphisms except $ \cldot:\uparrow\rightarrow\uparrow$ and $ \xdot:\uparrow\rightarrow\uparrow$ subject to the relations \eqref{OB relations 1 (symmetric group)}--\eqref{OB relations 2 (zigzags and invertibility)}.

\subsection{Hom-superspaces of $\AOBC_\kappa$ and dotted oriented Brauer-Clifford diagrams with bubbles}
In order to state  bases  of  Hom-superspaces in  $\AOBC_\kappa$, $\OBC_\kappa$ and $\OB_\kappa$, we need to recall the definitions of (dotted) oriented Brauer(-Clifford) diagrams with bubbles in \cite{BCNR,CK}.

\begin{Defn}\cite{BCNR}
For any two words   $\ob a,\ob b\in\wrd$,
 an {\em oriented Brauer diagram} of type $\ob a \rightarrow \ob b$ is   an
oriented diagrammatic representation of a
 bijection
\begin{equation}\label{bijec}
\{i\:|\:\ob a_i = \up\} \sqcup \{i'\:|\:\ob b_i = \down\}
\stackrel{\sim}{\rightarrow}
\{i\:|\:\ob b_i = \up\}\sqcup\{i'\:|\:\ob a_i = \down\}
\end{equation}
obtained by placing $\ob a$ below
 $\ob b$, then drawing strands connecting pairs of letters as prescribed by the  bijection in \eqref{bijec}.
The consistent orientation to each strand in the diagram is given by the
letters of $\ob a$ and $\ob b$. Two oriented Brauer diagrams are {\em equivalent}
if they are of the same type and represent the same bijection.

\end{Defn}
 For example, the following is an oriented Brauer diagram of type $\downarrow^2\uparrow^3\rightarrow \downarrow^2\uparrow^3$.
 \begin{equation}\begin{tikzpicture}[baseline = 25pt, scale=0.35, color=\clr]
        \draw[<-,thick] (2,0) to[out=up,in=down] (0,5);
        \draw[->,thick] (6,0) to[out=up,in=down] (7,5);
        \draw[->,thick] (7,0) to[out=up,in=down] (6,5);
               \draw[<-,thick] (0,0) to[out=up,in=left] (2,1.5) to[out=right,in=up] (4,0);
        \draw[->,thick] (2,5) to[out=down,in=left] (3,4) to[out=right,in=down] (4,5);
   \end{tikzpicture}.
   \end{equation}
Following \cite{BCNR},  $\Delta_0:= \begin{tikzpicture}[baseline = -2pt, scale=0.5, color=\clr]
            \draw[->,thick] (0,0) to[out=up, in=left] (0.5,0.5);
            \draw[-,thick] (0.4,0.5) to (0.5,0.5) to[out=right,in=up] (1,0) to[out=down, in=right] (0.5,-0.5) to[out=left,in=down] (0,0);
        \end{tikzpicture}=\begin{tikzpicture}[baseline = -2pt, scale=0.5, color=\clr]
            \draw[-<,thick] (0,0) to[out=up, in=left] (0.5,0.5);
            \draw[-,thick] (0.4,0.5) to (0.5,0.5) to[out=right,in=up] (1,0) to[out=down, in=right] (0.5,-0.5) to[out=left,in=down] (0,0);
        \end{tikzpicture}$
is called  a \textsf{bubble}.
Two oriented Brauer diagrams with  bubbles are {\em equivalent}
if they have the same number of bubbles
and the underlying oriented Brauer diagrams without  bubbles are equivalent.
 It is proven  in \cite{BCNR} that two equivalent oriented Brauer diagrams with bubbles of type $\ob a \rightarrow \ob b$  represent the same morphism in $\Hom_{\OBC_\kappa} (\ob a, \ob b)$, and the set of   all equivalence classes of oriented Brauer diagrams  with bubbles of type $\ob a \rightarrow \ob b$ is a $\kappa$-basis of  $\Hom_{\OB_\kappa}(\ob a, \ob b)$.

\begin{Defn}\cite[\S3.3]{CK} For any two words  $\ob a,\ob b\in\wrd$,
an \emph{oriented Brauer-Clifford diagram}  (resp.,\ \emph{with bubbles}) of type $\ob a\rightarrow \ob b$  is an oriented Brauer diagram (resp.,~with bubbles) of type $\ob a\rightarrow \ob b$  such that there are  finitely many $\emptydot$'s on its segments.
A \emph{dotted oriented Brauer-Clifford diagram} (resp.,\ \emph{with bubbles}) is  an oriented Brauer diagram (resp.,\ with bubbles) such that there are  finitely many $\emptydot$'s and $\fulldot$'s on its segments.
\end{Defn}

 It is defined in  \cite{CK} (see also \cite{BCNR} for the second one) that

  \begin{equation}\label{down white dot}
    \begin{tikzpicture}[baseline = 12pt, scale=0.5, color=\clr]
        \draw[->,thick] (0,2) to (0,0);
        \draw (0,1) \wdot;
    \end{tikzpicture}
    ~:=~
    \begin{tikzpicture}[baseline = 12pt, scale=0.5, color=\clr]
        \draw[->,thick]
            (1,2) to (1,1)
            to[out=down,in=right] (0.75,0.25)
            to[out=left,in=down] (0.5,1)
            to[out=up,in=right] (0.25,1.75)
            to[out=left,in=up] (0,1) to (0,0);
        \draw (0.5,1) \wdot;
    \end{tikzpicture},\quad
    \text{  and }\quad
     \begin{tikzpicture}[baseline = 12pt, scale=0.5, color=\clr]
        \draw[->,thick] (0,2) to (0,0);
        \draw (0,1) \bdot;
    \end{tikzpicture}
    ~:=~
    \begin{tikzpicture}[baseline = 12pt, scale=0.5, color=\clr]
        \draw[->,thick]
            (1,2) to (1,1)
            to[out=down,in=right] (0.75,0.25)
            to[out=left,in=down] (0.5,1)
            to[out=up,in=right] (0.25,1.75)
            to[out=left,in=up] (0,1) to (0,0);
        \draw (0.5,1) \bdot;
    \end{tikzpicture}
\end{equation}
such that any morphism in $\Hom_{\OBC_\kappa}(\ob a,\ob b) $ (resp.,  $\Hom_{\AOBC_\kappa}(\ob a,\ob b) $)  can be realized as a $\kappa$-linear combination of (resp., dotted)  oriented Brauer-Clifford diagrams  with bubbles  of type $\ob a\rightarrow \ob b$.     In order to
give  bases  of  Hom-superspaces in $\OBC_\kappa$ (resp., $\AOBC_\kappa$), Comes and Kujawa introduced the  notion of a normally ordered (resp., dotted) oriented Brauer-Clifford diagram.
 \begin{Defn}\cite[\S~3.3]{CK}
 An oriented Brauer-Clifford diagram is called normally ordered if
\begin{enumerate}
    \item it has at most one $\emptydot$ on each strand and it has no bubble;
    \item all $\emptydot$'s are on outward-pointing boundary segments;
    \item all $\emptydot$'s are positioned at the same height if the  segments they occur on have the same orientation.
\end{enumerate}
\end{Defn}
 \begin{Defn}\cite[Definition~3.8]{CK}\label{normdot}
A dotted oriented Brauer-Clifford diagram with bubbles is \emph{normally ordered} if
\begin{enumerate}
    \item it is a normally ordered oriented Brauer-Clifford diagram by ignoring  all bubbles and all $\fulldot$'s;
   \item each $\fulldot$ is either on a bubble or on an inward-pointing boundary segment;
   \item each bubble has zero $\emptydot$'s, and an \emph{odd number} of $\fulldot$'s, are crossing-free, counterclockwise, and there are no other strands shielding it from the rightmost edge of the picture;
    \item whenever a $\fulldot$ and a $\emptydot$ appear on a segment that is both inward and outward-pointing, the $\emptydot$ appears ahead of the $\fulldot$ in the direction of the orientation.
\end{enumerate}
\end{Defn}
\noindent For example, the following diagrams represent  two  morphisms in $\Hom_{\AOBC_\kappa}(\down^2\up^3,\downarrow\uparrow^2)$. The right one is normally ordered whereas the left one is not.

\begin{equation}\label{two AOBC diagrams}
    \begin{tikzpicture}[baseline = 25pt, scale=0.35, color=\clr]
        \draw[<-,thick] (2,0) to[out=up,in=down] (0,5);
        \draw[->,thick] (6,0) to[out=up,in=down] (6,5);
        \draw[->,thick] (7,0) to[out=up,in=down] (7,5);
        \draw (7,1.53) \wdot;
        \draw[<-,thick] (0,0) to[out=up,in=left] (2,1.5) to[out=right,in=up] (4,0);
        \draw[->,thick] (1,3.1) to (1,3) to[out=down,in=right] (-0.5,2) to[out=left,in=down] (-2,3)
                        to[out=up,in=left] (-0.5,4) to[out=right,in=up] (1,3);
        \draw (1.9,0.8) \bdot;
        \draw (3.7,0.8) \wdot;
        \draw (6,0.8) \bdot;
        \draw (6,1.53) \wdot;
        \draw (6,2.26) \bdot;
        \draw (-2,3) \bdot;
        \draw (6,3) \wdot;
        \draw (0.05,4.5) \bdot;
        \draw (6,4.35) \bdot;
    \end{tikzpicture}
    \qquad\qquad\qquad
    \begin{tikzpicture}[baseline = 25pt, scale=0.35, color=\clr]
        \draw[<-,thick] (2,0) to[out=up,in=down] (0,5);
        \draw[->,thick] (6,0) to[out=up,in=down] (6,5);
         \draw[->,thick] (6.7,0) to[out=up,in=down] (6.7,5);
         \draw (6.7,4.35) \wdot;
        \draw[<-,thick] (0,0) to[out=up,in=left] (2,1.5) to[out=right,in=up] (4,0);
        \draw[->,thick] (8.1,4) to (8,4) to[out=left,in=up] (7,3) to[out=down,in=left] (8,2)
                        to[out=right,in=down] (9,3) to[out=up,in=right] (8,4);
        \draw (0.05,4.5) \bdot;
        \draw (0.65,3) \bdot;
        \draw (9,3) \bdot;
        \draw (6,5/4) \bdot;
        \draw (6,10/4) \bdot;
        \draw (6,15/4) \bdot;
        \draw (0.3,0.8) \wdot;
        \draw (1.9,0.8) \wdot;
    \end{tikzpicture}
\end{equation}

Let $\Delta_k=\begin{tikzpicture}[baseline = 5pt, scale=0.5, color=\clr]
        \draw[->,thick] (0.6,1) to (0.5,1) to[out=left,in=up] (0,0.5)
                        to[out=down,in=left] (0.5,0)
                        to[out=right,in=down] (1,0.5)
                        to[out=up,in=right] (0.5,1);
        \draw (1,0.5) \bdot;
        \draw (1.4,0.5) node{\footnotesize{$k$}};
    \end{tikzpicture}.~$  Then  $\Delta_k$ is the crossing-free and  counterclockwise bubble with $k$ $\fulldot$'s on it.  By \cite[Proposition~3.12]{CK},
 $\Delta_k=0$ whenever $k$ is even. This is the reason why Comes-Kujawa require that there are odd numbers of $\fulldot$'s on each bubble in Definition~\ref{normdot}. Later on, only  bubbles on which  there are odd $k$  $\fulldot$'s will be considered.

Following \cite{CK},  two normally ordered (resp., dotted) oriented Brauer-Clifford diagrams (resp., with bubbles)
are said to be \emph{equivalent} if their underlying oriented Brauer diagrams (resp., with bubbles) are equivalent and their corresponding strands have the same number of $\emptydot$'s (resp., and $\fulldot$'s).  It is proven  in \cite{CK} that two equivalent   normally ordered oriented Brauer-Clifford diagrams of type $\ob a \rightarrow \ob b$ represent the same morphism in $\Hom_{\OBC_\kappa} (\ob a, \ob b)$ and the set of  all equivalence  classes of  normally  ordered   oriented Brauer-Clifford diagrams of type $\ob a\rightarrow \ob b$
is a $\kappa$-basis of $\Hom_{\OBC_\kappa}(\ob a, \ob b)$. Unlike the cases for $\OB_\kappa$ and $\OBC_\kappa$, two equivalent normally ordered dotted oriented Brauer-Clifford diagrams (with bubbles) may not represent the same morphism in $\AOBC_\kappa$. The following is the main result of \cite{CK}.
\begin{Theorem}\cite[Corollary~6.4]{CK}\label{main-aff} For any  $\ob a, \ob b\in \wrd$, the set of all  equivalence classes of normally  ordered  dotted oriented Brauer-Clifford diagrams with bubbles of type $\ob a\rightarrow \ob b$
is   a $\kappa$-basis of $\Hom_{\AOBC_\kappa}(\ob a, \ob b)$. \end{Theorem}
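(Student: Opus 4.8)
The plan is to prove the two halves of a basis theorem: that the normally ordered dotted oriented Brauer-Clifford diagrams with bubbles of type $\ob a\rightarrow \ob b$ span $\Hom_{\AOBC_\kappa}(\ob a,\ob b)$, and that they are $\kappa$-linearly independent.

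For spanning, I would use that, by the discussion preceding the theorem (following \cite{CK}), every morphism in $\Hom_{\AOBC_\kappa}(\ob a,\ob b)$ is already a $\kappa$-linear combination of dotted oriented Brauer-Clifford diagrams with bubbles, so it suffices to rewrite an arbitrary such diagram as a combination of normally ordered ones. I would do this by a reduction algorithm, inducting on the lexicographically ordered statistic (number of crossings; number of closed components; number of $\fulldot$'s and $\emptydot$'s not yet in a normal position). The moves: (i) the relations \eqref{OBC relations} (two $\emptydot$'s collapse to the identity strand, a $\emptydot$ teleports through a crossing, a $\emptydot$ on a left curl is $0$) together with the reduction underlying the $\OBC_\kappa$ basis theorem, to normally order all $\emptydot$'s; (ii) the first relation in \eqref{AOBC relations} to slide a $\fulldot$ past a $\emptydot$ up to a sign, so that each strand carries at most one $\emptydot$, placed ahead of its $\fulldot$'s; (iii) the ``dot through a crossing'' relation, the second in \eqref{AOBC relations}, to move $\fulldot$'s along strands---each use produces an error term in which one crossing is resolved (possibly creating $\emptydot$'s, handled by (i)), strictly decreasing the statistic; (iv) pushing $\fulldot$'s onto inward-pointing segments and onto closed components, pulling the resulting bubbles past everything to the right edge via the cup/cap relations, then collapsing each closed component to a single $\Delta_k$ and deleting those with $k$ even using $\Delta_k=0$. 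The only real care is needed when a $\fulldot$ lies on a strand that is simultaneously inward- and outward-pointing: it must be routed through the curl by \eqref{down white dot}, and one must check the statistic still drops; termination is then a finite check.

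The main obstacle is linear independence, for which I would build a sufficiently faithful representation. Over $\mathbb C$, use the queer Lie superalgebra $\mathfrak q(n)$ with natural module $V=\mathbb C^{n|n}$: there is a monoidal superfunctor $F$ from $\AOBC_{\mathbb C}$ to the category of $\mathfrak q(n)$-supermodules with $\up\mapsto V$, $\down\mapsto V^*$, $\lcup,\lcap$ sent to (co)evaluation, $\swap$ to the graded flip, $\cldot$ to the odd operator on $V$ coming from the type-Q structure, and $\xdot$ to an ``$\Omega$''-tensor assembled from the $\mathfrak q(n)$-action---the superalgebra analogue of the Casimir tensor used for $\mathcal{AOB}$ in \cite{BCNR}. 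Evaluating on $M\otimes V^{\otimes r}\otimes (V^*)^{\otimes s}$ with $M$ a Verma (or parabolic Verma) module of generic highest weight and $n$ large, the images of distinct normally ordered diagrams should be linearly independent: the dots give a commuting family of Jucys--Murphy-type operators whose eigenvalues record the $\fulldot$-placements on the strands and, through the scalars $F(\Delta_k)$, on the bubbles, while by the $\OBC_\kappa$ basis theorem the underlying oriented Brauer-Clifford diagram is separated by the standard $\OBC_{\mathbb C}$-part of the action; genericity of $M$ together with $n\to\infty$ then removes every relation among these separating data. To descend to an arbitrary commutative $\kappa\ni 2^{-1}$: the presentation of Definition~\ref{aobc} is defined over $\mathbb Z[1/2]$ and base-changes, so the spanning step realizes $\Hom_{\AOBC_{\mathbb Z[1/2]}}(\ob a,\ob b)$ as a quotient $q\colon \mathbb Z[1/2]^{(B)}\twoheadrightarrow \Hom_{\AOBC_{\mathbb Z[1/2]}}(\ob a,\ob b)$, where $B$ is the set of normally ordered diagrams; tensoring with $\mathbb Q$ and invoking $\mathbb C$-linear independence (hence $\mathbb Q$-linear independence) shows $\Ker q$ is torsion, but $\Ker q\subseteq\mathbb Z[1/2]^{(B)}$ is torsion-free, so $q$ is an isomorphism, whence $\Hom_{\AOBC_\kappa}(\ob a,\ob b)=\kappa\otimes_{\mathbb Z[1/2]}\Hom_{\AOBC_{\mathbb Z[1/2]}}(\ob a,\ob b)$ is free with basis $B$. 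The genuine difficulty is (a) pinning down the action of $\xdot$ so that all relations of Definition~\ref{aobc} hold, the odd super-interchange signs making this delicate, and (b) the genericity argument that no nonzero $\kappa$-combination of normally ordered diagrams can act as $0$ for all large $n$; an alternative I would keep in reserve is to filter $\Hom_{\AOBC_\kappa}(\ob a,\ob b)$ by the total number of $\fulldot$'s and to identify its associated graded, via \eqref{AOBC relations}, with a free module over $\Hom_{\OBC_\kappa}(\ob a,\ob b)$, but this still requires a representation-theoretic lower bound on the graded.
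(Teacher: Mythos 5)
You should first note that the paper does not prove this statement at all: Theorem~\ref{main-aff} is quoted verbatim from \cite[Corollary~6.4]{CK} and is used as an \emph{input} to the present paper, whose own contribution is the cyclotomic Conjecture~\ref{Cyclotomic basis conjecture}. So the only meaningful comparison is with the proof in \cite{CK} (and its prototype in \cite{BCNR} for the oriented Brauer category), and your outline is essentially a reconstruction of that proof: spanning by a straightening algorithm driven by \eqref{OBC relations}--\eqref{AOBC relations}, linear independence over $\mathbb C$ via the monoidal superfunctor $\Psi$ of Theorem~\ref{actofca} evaluated at generic (parabolic) Verma modules for $\mathfrak q(n)$ with $n$ large, reduction of general $\Hom(\ob a,\ob b)$ to $\End(\up^r)$ by the adjunction isomorphisms, and finally descent from $\mathbb Q$ to $\mathbb Z[1/2]$ and base change to arbitrary $\kappa\ni 2^{-1}$. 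The torsion argument for the descent is fine as stated.

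What keeps this from being a proof rather than a plan is exactly what you flag, and it is worth being explicit that these are the hard 30--40 pages of \cite{CK}: (a) one must actually verify that $\Psi(\xdot)=\Omega_1$ (as in \eqref{def-Omega}) satisfies all relations of Definition~\ref{aobc}, including the interaction with $\cldot$ and the super-interchange signs; (b) the separation argument needs two independent ingredients, namely a triangularity statement for the action of the $\fulldot$'s on a chosen basis of $V^{\otimes r}\otimes M$ (the analogue of Lemma~\ref{L:coefficients technical lemma} here, which requires an explicit filtration of $V\otimes M(\l)$ as in Lemma~\ref{filml}), \emph{and} the fact that the scalars $\Psi(\Delta_{2r-1})$, computed via Sergeev's central elements $S_r$ as in Propositions~\ref{actiononm}--\ref{actionofsr}, are Zariski-dense as the highest weight varies, so that no polynomial relation among the bubbles survives; without the latter, diagrams differing only in their bubble content are not separated. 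Also, your step (iv) silently uses that $\Delta_k=0$ for $k$ even, which is itself a computation in $\AOBC_\kappa$ (\cite[Proposition~3.12]{CK}) and must precede the definition of normal ordering. None of these is wrong in your sketch, but each is asserted rather than carried out, so the proposal should be graded as a correct strategy identification rather than a proof.
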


\subsection{Cyclotomic quotients and Comes-Kujawa's conjecture}
Fix two nonnegative integers $a,b$ and  $\mathbf u=(u_1, \ldots, u_b)\in(\kappa^\ast)^b$, where $\kappa^\ast=\kappa\setminus\{0\}$.  Let  \begin{equation}\label{funcf} f(t)=t^{2a+\epsilon}\prod_{1\leq i\leq b}(t^2-u_i),\end{equation} where $ \epsilon\in \{0,1\}$. In \cite{CK}, Comes and Kujawa defined \begin{equation} \label{COBC defn}\OBC_\kappa^f=\AOBC_\kappa/I,\end{equation}
called the~\textsf{cyclotomic quotient} of $\AOBC_\kappa$ or the  \textsf{level $\ell$ oriented Brauer-Clifford supercategory}~\cite{CK},
where $\ell$ is the degree of $f(t)$ and $I$ is  the left  tensor ideal generated by $f(\xdot)$.

\begin{conj}\cite[Conjecture~7.1]{CK}\label{Cyclotomic basis conjecture} Suppose that $\kappa$ is a field of characteristic not two.   Given two words  $\ob a,\ob b\in\wrd$,  $\Hom_{\OBC_\kappa^f}(\ob a,\ob b)$ has basis given by  all equivalence classes of normally ordered dotted oriented Brauer-Clifford diagrams with bubbles of type $\ob a\to\ob b$ with fewer than $\ell$ $\fulldot$'s on each strand.
\end{conj}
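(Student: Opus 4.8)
The plan is to realize the candidate basis as a spanning set first, and then to establish linear independence by constructing a faithful action on a suitable module — the standard "two-sided" strategy for diagrammatic categories. Since $\OBC_\kappa^f = \AOBC_\kappa/I$ and Theorem~\ref{main-aff} gives a basis of $\Hom_{\AOBC_\kappa}(\ob a,\ob b)$ by all normally ordered dotted oriented Brauer-Clifford diagrams with bubbles, the spanning claim amounts to showing that, modulo $I$, every such diagram with $\geq \ell$ $\fulldot$'s on some strand can be rewritten in terms of diagrams with fewer than $\ell$ $\fulldot$'s on every strand. First I would use the relation $f(\xdot)=0$ in $\OBC_\kappa^f$ together with the "dot-slide" relations \eqref{AOBC relations} and \eqref{OBC relations} to reduce the number of $\fulldot$'s on any strand; the factorization \eqref{funcf} of $f(t)$ into $t^{2a+\epsilon}\prod(t^2-u_i)$ means that $\xdot^{\,\ell}$ is a $\kappa$-linear combination of lower powers $\xdot^{\,k}$ with $k<\ell$, and each such substitution, after sliding $\emptydot$'s past $\fulldot$'s using \eqref{AOBC relations} and clearing crossings, stays within the span of normally ordered diagrams with bubbles. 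The bubbles $\Delta_k$ produced in this process are scalars (elements of $\End_{\OBC_\kappa^f}(\unit)$) that must also be controlled; here I would invoke \cite[Proposition~3.12]{CK} (even bubbles vanish) and the cyclotomic relation to argue that bubbles with $\geq \ell$ $\fulldot$'s are expressible through lower ones, so the whole reduction terminates. This gives the spanning statement over an arbitrary commutative ring containing $2^{-1}$, not just a field.

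For linear independence I would build an explicit $\OBC_\kappa^f$-module on which the candidate basis acts by linearly independent operators. The natural choice, following the queer-analogue of the constructions in \cite{BCNR} for cyclotomic oriented Brauer categories, is a tensor-space module: take the "cyclotomic polynomial Clifford module" — roughly, $\kappa[x]/(f(x))$ tensored with a Clifford algebra generator to account for $\cldot$ — and let the generating morphisms $\lcup,\lcap,\swap,\xdot,\cldot$ act in the evident way ($\xdot$ by multiplication by $x$, $\cldot$ by the odd Clifford element, cups and caps by (co)evaluation with respect to a bilinear form). One checks the defining relations \eqref{OB relations 1 (symmetric group)}--\eqref{AOBC relations} hold and that $f(\xdot)$ acts as zero, so this descends to $\OBC_\kappa^f$; then evaluating the candidate basis diagrams of type $\ob a\to\ob b$ on a well-chosen tuple of basis vectors of the tensor module should separate them, by a triangularity argument with respect to the number of $\fulldot$'s and the underlying Brauer diagram. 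Combined with the spanning statement and a dimension/rank count matching the free $\kappa$-module of the expected rank, this yields that the candidate set is a $\kappa$-basis.

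The main obstacle I expect is the linear-independence half, specifically the verification that the tensor-space action is faithful on the candidate basis rather than merely well-defined: the normalization conditions in Definition~\ref{normdot} (all $\emptydot$'s at equal height, $\fulldot$'s before $\emptydot$'s along the orientation, exactly odd numbers of $\fulldot$'s on counterclockwise crossing-free bubbles positioned at the right edge) are precisely what prevent naive equivalent diagrams from coinciding in $\AOBC_\kappa$, and carrying this bookkeeping through the module action — while simultaneously tracking signs from the super-interchange law — is delicate. A secondary difficulty is that the conjecture is stated over a field of characteristic $\neq 2$, whereas the spanning argument works over any $\kappa\ni 2^{-1}$; to get the sharp statement one likely first proves freeness of the relevant Hom-space over $\Z[2^{-1}][\bu]$ (or a suitable universal base ring) and then specializes, which requires the base-change compatibility of the module construction. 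Once faithfulness is in hand, the by-product about the cyclotomic walled Brauer-Clifford superalgebras follows by comparing $\End$-algebras of the two constructions via the basis.
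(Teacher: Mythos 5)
There is a genuine gap in the linear-independence half, and it is exactly where you flagged the ``main obstacle.'' The module you propose --- essentially $\kappa[x]/(f(x))$ tensored with a Clifford generator, with cups and caps acting by (co)evaluation --- is a single finite, fixed-rank supermodule. No such module can separate the candidate diagrams of type $\up^r\to\up^r$ for all $r$: once $r$ exceeds (roughly) the rank of the module for $\up$, the underlying oriented Brauer--Clifford diagrams already act by linearly dependent operators (this is the usual failure of Schur--Weyl faithfulness outside the stable range), so no amount of bookkeeping with signs and normal orderings will rescue faithfulness. Moreover a fixed module pins down one specific value of each bubble $\Delta_{2k-1}$, whereas the statement must hold for the category in which these bubbles are free central parameters. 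The paper's proof supplies precisely the two missing ingredients. First, the test objects are $V^{\otimes r}\otimes M^{\mathfrak p}(\lambda)$, where $V=\C^{n|n}$ is the natural $\mathfrak q(n)$-module with $n$ \emph{growing with} $r$ (one takes $n_i\ge 2r$), and $M^{\mathfrak p}(\lambda)$ is a parabolic Verma supermodule whose highest weight $\lambda$ is engineered in \eqref{de of lam} so that $f(x)$ kills $V\otimes M^{\mathfrak p}(\lambda)$ (Theorem~\ref{zerop}); linear independence is then obtained by an explicit triangularity computation $c_{\beta,\beta}=\pm1$, $c_{\beta,\beta'}=0$ for $\beta'<\beta$ against a PBW-type basis (Lemma~\ref{L:coefficients technical lemma}, Proposition~\ref{basis theorem for level l Hecke-Clifford}). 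Second, to pass from the particular bubble values $\delta_{2k-1}=-2z_k(\lambda)$ realized by these modules to generic bubbles, the paper shows the map $(n_1,\dots,n_{a+b})\mapsto(z_1(\lambda),\dots,z_{a+b}(\lambda))$ is dominant (Lemma~\ref{dominant}), so the realized bubble values are Zariski dense; this yields the result over the universal ring $\mathcal Z=\Z[\hat u_1,\dots,\hat u_b]$ (Theorem~\ref{mainf}) and then over any $\kappa\ni 2^{-1}$ by base change. You correctly anticipated the need for a universal base ring and specialization, but without the growing family of $\mathfrak q(n)$-modules, the tailored weights $\lambda$, and the density argument for the bubble parameters, the faithfulness step cannot be carried out.

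Two smaller points. The spanning half does not need to be re-proved: Comes and Kujawa already established it over any commutative ring containing $2^{-1}$, and the paper simply cites this. And the reduction from general $\Hom_{\OBC^f_\kappa}(\ob a,\ob b)$ to $\End_{\OBC^f_\kappa}(\up^r)$ is done by an explicit linear isomorphism \eqref{isospace} filtered by the number of $\fulldot$'s, rather than by evaluating arbitrary-type diagrams directly on tensor space; this is worth noting because the dimension count $\dim\Hom(\ob a,\ob b)=\dim\End(\up^r)$ is what lets one work only with upward strands throughout Sections~2 and~3.
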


Comes and Kujawa proved that Conjecture~\ref{Cyclotomic basis conjecture}
is true when
either $f(t) = t$ or $f(t) = t^2-u$ with $u\neq 0$. The proof for the second case is inspired by  \cite[\S4]{GRSS}.
Let $\AOBC_\kappa (0)$ (resp.,~$\OBC_\kappa^f(0)$) be  the supercategory obtained from $\AOBC_\kappa$ (resp.~ $\OBC_\kappa^f$) by imposing the relations  $\Delta_{k}=0$ for all $k>0$. Using certain representations of  finite $W$-superalgebras associated to queer Lie superalgebras  $\mathfrak q(n)$, Comes and Kujawa  are able to prove  Conjecture~\ref{Cyclotomic basis conjecture} for $\OBC_\kappa^f(0)$. In general, as far as we know, their conjecture remains open.

 The main result of  this paper is that Conjecture~\ref{Cyclotomic basis conjecture} holds over an arbitrary commutative ring $\kappa$ containing $2^{-1}$.  As a by-product, we prove that the cyclotomic walled Brauer-Clifford superalgebra defined by Comes and Kujawa in \cite{CK}  is isomorphic our cyclotomic walled Brauer-Clifford superalgebra in \cite[Definition~3.14]{GRSS} when $\kappa$ is an algebraically closed field with characteristic not two.

The contents of this paper are organized as follows. In section~2, we consider certain tensor modules in parabolic supercategory $\mathcal O$ for  $\mathfrak q(n)$ over $\mathbb C$. In section~3, we
prove that Conjecture~\ref{Cyclotomic basis conjecture} holds over an arbitrary commutative ring $\kappa$ containing $2^{-1}$. In section~4, we prove that the cyclotomic walled Brauer-Clifford superalgebra defined by Comes-Kujawa and ours are isomorphic if $\kappa$ is an algebraically closed field with characteristic not two.

\section{Schur-Weyl super-duality}

Let $\mfg$ be the queer Lie superalgebra  $\mathfrak{q}(n)$ of rank $n$ over $\C$. Then $\mfg$ has  a basis $e_{i,j}=E_{i,j}+E_{-i,-j}$ (even element), $f_{i,j}=E_{i,-j}+E_{-i,j}$ (odd element) for
$i,j\in I^+=\{1,2,...,n\},$ where $E_{i,j}$ is the $2n\times2n$ matrix with entry $1$ at $(i,j)$ position and zero elsewhere for $i,j\in I=I^+\cup I^-$, and  $I^-=-I^+$.

Let  $V=\C^{n|n}=V_{\bar0}\oplus V_{\bar1}$ be the
natural $\mfg$-supermodule (and the natural supermodule of the general linear Lie superalgebra $\mathfrak{gl}_{n|n}$) with basis $\{v_i\,|\,i\in I\}$.
Then $v_i$ has the parity $[v_i]=[i]\in\Z_2$, where $[i]=0$ and $[-i]= 1$ for $i\in I^+$.
Let $V^*$ be the linear dual  of $V$ with $\{\bar v_i\,|\,i\in I\}$ being its dual basis. Then  $V^*$ is a left $\mfg$-supermodule such that
\begin{eqnarray}\label{action-dual}
E_{a,b}\bar v_i=-(-1)^{[a]([a]+[b])}\d_{i,a}\bar v_b\mbox{ for }a,b,i\in I.\end{eqnarray}
Let $\fh=\fh_{\bar 0}\oplus\fh_{\bar 1}$ be a Cartan subalgebra of $\mfg$ with even part $\fh_{\bar0}={\rm span}\{h_i\,\,|\,i\in I^+\}$ and
odd part $\fh_{\bar 1}={\rm span}\{h_i':\,\,|\,i\in I^+\}$, where $h_i=e_{i,i}$ and $h_i'=f_{i,i}$ for all admissible $i$. Let $\fh^*_{\bar0}$ be the linear dual  of $\fh_{\bar0}$ with
 $\{\es_i\,|\,i\in I^+\}$ being the dual basis of $\{h_i\,\,|\,i\in I^+\}$. Then an element $\l\in\fh^*_{\bar0}$ (called a {\it weight}) can be written as \begin{equation}\label{weight-}\l=\SUM{i\in I^+}{}\l_i\es_i.\end{equation}
 Let $\mathfrak b$ be the standard Borel super subalgebra of $\mfg$ with even part $\mathfrak b_{\bar 0}={\rm span}\{e_{i,j}\,\,|\,i\leq j\in I^+\}$ and
odd part $\mathfrak b_{\bar 1}={\rm span}\{f_{i,j}\,\,|\,i\leq j\in I^+\}$.
Let $\mathcal O$ be the supercategory of all $\mfg$-supermodules $M$ such that:
\begin{enumerate}
\item $M$ is finitely generated as a $\mfg$-supermodule;
\item $M$ is locally finite-dimensional over $\mathfrak b$;
\item $M$ is semisimple over $\fh_{\bar 0}$.
\end{enumerate}
\noindent For any $\l\in \fh^*_{\bar0}$, let $I_\l$ be the irreducible $\fh$-supermodule. Then the dimension of $I_\l$  is  $2^{\lfloor\frac{\ell(\lambda)+1}{2}\rfloor}$ (see ~\cite{CW}), where $\ell(\l)$ is the number of non-zero parts of $\l$ and $\lfloor a\rfloor$ is the integer part of any nonnegative real number $a$.
Let $$M(\lambda):= \U(\mfg)\otimes _{\U(\mathfrak b)} I_\l$$ be the Verma supermodule with the highest weight $\lambda$, where in general  $\U(\mathfrak f)$ is the universal enveloping algebra of any Lie superalgebra $ \mathfrak f$. Then $M(\lambda)$ has the simple head denoted by  $L(\l)$. It is well known that   $L(\l)$ is of  finite dimensional if and only if $\l_i-\l_{i+1}\in\mathbb Z_{\geq0}$ and $\l_i-\l_{i+1}=0 $ implies that $\l_i=0$, for $1\leq i<n$.

Fix $ \epsilon\in\{0,1\}$ and two nonnegative integers $a$ and $b$. We define  $\mathbf n=(n_1,n_2, \ldots, n_{a+b+\epsilon})$ such that \begin{equation}\label{bign} n=\sum_{i=1}^{a+b+\epsilon} n_i,\end{equation} the summation of even positive integers $n_i$ for  $1\leq i\leq a+b+\epsilon$.
Let $\mathfrak p$
be the parabolic super subalgebra of $\mfg$ such that the Levi super subalgebra $\mathfrak l$ is  $ \oplus_{i=1}^{a+b+\epsilon} \mathfrak q(n_i)$.   Let $\mathcal O^{\mathfrak p}$ be the corresponding parabolic  supercategory $\mathcal O$. Then $\mathcal O^{\mathfrak p}$  is the full subcategory of $\mathcal O$
consisting of all $\mfg$-supermodules which  are locally finite-dimensional over $\mathfrak p$.
Throughout, we define
\begin{equation}\label{pii} p_0=0, \text{ and $p_i=\sum_{j=1}^i n_j$, and $\mathbf p_i=\{p_{i-1}+1, \ldots, p_i\}$ for $1\leq i\leq a+b+\epsilon$}\end{equation}  and let
\begin{equation}\label{lambda}
\Lambda=\{\l\in \fh^*_{\bar0}\mid  \l_j-\l_{j+1}\in\mathbb Z_{\geq 0} \text{ and $\l_j=0$ if $\l_j=\l_{j+1} $},\  p_i\leq j< p_{i+1}\},
\end{equation}
be the set of $\mathfrak l$-dominant weights.
For any $\l\in\Lambda$, the irreducible $\mathfrak l$-module $L(\l)^0$ with the highest weight $\l$ is  finite dimensional.
The parabolic Verma supermodule $M^{\mathfrak p}(\l)$  with the highest weight  $\l\in\Lambda$ is
$$M^{\mathfrak p}(\l):= \U(\mfg)\otimes _{\U(\mathfrak p)} L(\l)^0.$$
 For any $\mfg$-supermodule  $M$ and any  $r\!\in\!\Z^{\ge0}$, set $M^{r}= V^{\otimes r}\OTIMES M $. For convenience
we define the totally  ordered set \begin{equation}\label{ordered-set}J= J_1\cup\{0\}\
\mbox{ where $J_1=\{1,...,r\}$, } \end{equation}
such that
$r\prec r-1\prec\ldots\prec 1\prec 0$.
We write $M^{r}$ as
\begin{equation}\label{M-st==}M^{r}=\OT{i\in J}V_i,\mbox{  where $V_0=M$, $V_i=V$ if $i \in J_1$}. \end{equation}
 Hereafter all tensor products will be taken according to the total order $\prec$  on  $J$.  Then $M^r$ is a   left $\U(\mfg)^{\otimes(r+1)}$-supermodule such that the action is given by
 \begin{equation}\label{gact} \Big(\OT{i\in J} g_i\Big)\Big(\OT{i\in J} x_i\Big)=(-1)^{\sum\limits_{i\in J}{}[g_i]\sum\limits_{j\prec i}{}[x_j]}\OT{i\in J}(g_ix_i)\mbox{ for }g_i\in \U(\mfg),\ x_i\in V_i.\end{equation}
Via the coproduct of $\U(\mfg)$, it is a left $\U(\mfg)$-supermodule. In order to define the left action of $\AOBC_{\mathbb C}$ on $M^r$,
we define   \begin{eqnarray}\label{def-Omega}&\!\!\!\!\!\!\!\!\!\!\!\!\!\!\!\!\!\!\!\!\!\!\!\!\!\!\!&
\tilde e_{i,j}=E_{i,j}-E_{-i,-j},\ \ \  \tilde f_{i,j}=E_{-i,j}-E_{i,-j} \in {\mathfrak {gl}}_{n|n},\nonumber\\
&\!\!\!\!\!\!\!\!\!\!\!\!\!\!\!\!\!\!\!\!\!\!\!\!\!\!\!\!\!\!\!&
\Omega_1\!=\!\mbox{$\sum\limits_{i,j\in I^+}$}\tilde e_{i,j}\OTIMES  e_{j,i}\!-\!\mbox{$\sum\limits_{i,j\in I^+}$}\tilde f_{i,j}\OTIMES  f_{j,i}\in \mathfrak{gl}_{n|n} \OTIMES\mfg.\end{eqnarray}
 Let $c: V\to V$ be the odd linear map such that \begin{equation}\label{oddmapc}c(v_{i})=(-1)^{\p{v_{i}}}\sqrt{-1}v_{\bar{i}}, \ \ \text{for all $i \in I$.}\end{equation}
Since $\mathcal O$ and $\mathcal O ^{\mathfrak p}$ are closed under the functors $V\otimes -$ and $ V^*\otimes -$, we can use $\mathcal O^{\mathfrak p} $ (or $\mathcal O$) to replace the supercategory $\U(\mfg)$-smod  of left $\U(\mfg)$-supermodules in \cite[Theorem~4.4]{CK}.

\begin{Theorem}\cite[Theorem~4.4]{CK}\label{actofca}
There is a monoidal superfunctor $\Psi:\AOBC_{\mathbb C}\to\END(\mathcal O^{\mathfrak p})$  sending the objects $\up, \down$ to the endofunctors $V\otimes-, V^*\otimes-$, respectively, and  moreover,
\begin{align*}
    \Psi\left(\lcup\right)&:\operatorname{Id} \rightarrow V \otimes V^*\otimes-,
    \quad&
    m &\mapsto \sum_{i\in I} v_i\otimes v^*_i \otimes m,\\
    \Psi\left(\lcap\right)&:V^* \otimes V\otimes- \rightarrow \operatorname{Id},
    \quad&
    f\otimes v\otimes m  &\mapsto f(v) m,\\
    \Psi\left(\swap\right)&:V \otimes V \otimes- \rightarrow V \otimes V \otimes-,
    \quad&
    u \otimes v \otimes m &\mapsto (-1)^{\p{u}\p{v}} v\otimes u \otimes m,\\
    \Psi\left(\xdot\right)&:V \otimes - \rightarrow V \otimes -,
    \quad&
    v \otimes m &\mapsto \Omega_1(v \otimes m),\\
    \Psi\left(\cldot\right)&:V \otimes - \rightarrow V \otimes -,
    \quad&
    v \otimes m &\mapsto c(v)\otimes m.
\end{align*}
\end{Theorem}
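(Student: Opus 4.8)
The plan is to follow the argument of Comes and Kujawa for \cite[Theorem~4.4]{CK} essentially verbatim; the only new input is the fact recalled above that $\mathcal O^{\mathfrak p}$ (and $\mathcal O$) is closed under the endofunctors $V\otimes-$ and $V^{*}\otimes-$, which is what guarantees that the superfunctor built out of these lands in $\END(\mathcal O^{\mathfrak p})$ and not merely in $\END$ of the category of all $\U(\mfg)$-supermodules.

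First I would put $\Psi(\up)=V\otimes-$ and $\Psi(\down)=V^{*}\otimes-$, define $\Psi$ on the five generating morphisms by the formulas in the statement, and check that each formula indeed defines a morphism in $\END(\mathcal O^{\mathfrak p})$ of the stated parity. Naturality in $M$ is automatic, since each formula acts as the identity on the $M$-tensorand (up to reordering the tensor factors). The required $\U(\mfg)$-equivariance is the classical Schur--Weyl input: $\sum_{i\in I}v_i\otimes v_i^{*}$ is $\mfg$-invariant in $V\otimes V^{*}$ (so $\Psi(\lcup)$ is a morphism); the evaluation pairing is $\mfg$-equivariant (so $\Psi(\lcap)$ is); the Koszul flip is the symmetric braiding on $V\otimes V$ (so $\Psi(\swap)$ is); the odd map $c$ of \eqref{oddmapc} super-commutes with the $\mfg$-action on $V$, which is in essence the defining property of $\mathfrak q(n)$ as a centraliser subalgebra of $\mathfrak{gl}_{n|n}$ (so $c\otimes\mathrm{id}$ is an odd superintertwiner); and $\Omega_1$ of \eqref{def-Omega} acts on $V\otimes M$ by a $\mfg$-module endomorphism, which is the usual Casimir-type computation.

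The substance of the proof is checking that $\Psi$ respects the defining relations from Definition~\ref{aobc}. The relations \eqref{OB relations 1 (symmetric group)}--\eqref{OB relations 2 (zigzags and invertibility)} are formal: in the symmetric monoidal supercategory of $\mfg$-supermodules, $V^{*}$ is a two-sided dual of $V$ via the above (co)evaluations, so the symmetric-group/braid relations, the two zigzag (snake) identities, and the invertibility of the rotated crossing (a composite of the braiding with the snake maps, hence an isomorphism) all hold automatically. The relations \eqref{OBC relations} reduce to three facts about $c$: that $c^{2}=\mathrm{id}_V$ (immediate from \eqref{oddmapc}, since $\bar i$ has parity opposite to $i$), giving the first relation; that $c$ on one tensorand slides through the braiding, a short sign computation with the super-interchange law, giving the second; and that the ``bubble'' evaluates to $\sum_{i\in I}(-1)^{[i]}\langle v_i^{*},c(v_i)\rangle=0$ because $c$ sends each basis vector of $V$ to a scalar multiple of a \emph{different} basis vector, giving the third.

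The genuinely delicate part is the pair of affine relations \eqref{AOBC relations}. For the first, which says that a black dot and a white dot on the same strand anticommute, the key point is that the minus signs built into $\tilde e_{i,j}=E_{i,j}-E_{-i,-j}$ and $\tilde f_{i,j}=E_{-i,j}-E_{i,-j}$ make these elements interact with $c$ --- one anticommuting with it, the other commuting --- in precisely the manner which, after the super-interchange signs on $V\otimes M$ are tracked, produces the stated anticommutation. The second affine relation is a Casimir-type identity: writing $\sigma$ for the braiding on $V\otimes V$ and $x=\Psi(\xdot)$ for the action of $\Omega_1$ on a single $V$-slot, one expands $\Omega_1$ in matrix units and verifies directly that the relevant difference of (conjugates of) $\sigma$ and $x$ in $\End(V\otimes V\otimes M)$ equals $\mathrm{id}-(c\otimes c)$, the right-hand side of \eqref{AOBC relations}. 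I expect this last identity to be the main obstacle: it is the one place where the precise normalisation of $\Omega_1$ --- in particular the queer correction term $-\sum\tilde f_{i,j}\otimes f_{j,i}$ --- and the detailed structure of $\mathfrak q(n)$ are used, and the sign bookkeeping forced by the super-interchange convention has to be carried out with care. Once \eqref{OB relations 1 (symmetric group)}--\eqref{AOBC relations} are all verified, the presentation of $\AOBC_\kappa$ by generators and relations in Definition~\ref{aobc} produces the desired monoidal superfunctor $\Psi$; as noted, this verification coincides with the one in \cite[Theorem~4.4]{CK}.
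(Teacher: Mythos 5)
Your proposal is correct and takes essentially the same route as the paper: the paper gives no independent proof of this statement, but simply invokes \cite[Theorem~4.4]{CK} after observing that $\mathcal O$ and $\mathcal O^{\mathfrak p}$ are closed under $V\otimes-$ and $V^*\otimes-$, which is exactly the one piece of ``new input'' you isolate. Your expanded sketch of the Comes--Kujawa verification (duality/zigzag relations being formal, $c^2=\mathrm{id}$, the vanishing bubble, and the Casimir-type check of \eqref{AOBC relations} for $\Omega_1$) is consistent with their argument and adds nothing that conflicts with the paper.
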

Write $\Psi_M:\AOBC_{\mathbb C}\to \mathcal O$ (resp., $\mathcal O^{\mathfrak p}$) for the composition of $\Psi$ followed by evaluation at $M$ for any highest weight supermodule $M$ in $\mathcal O$ (resp., $\mathcal O^{\mathfrak p}$).
Given two $\mathfrak h$-supermodule (resp.,  $\mfg$-supermodule) $M$ and $N$, define $M\over N$ to be a supermodule which has a filtration of length two such that the top (resp., bottom) section is  isomorphic to $M$ (resp.,  $N$). Let  $\Pi$ be the parity change functor. The following result can be found   in the   proof of  \cite[Lemma~4.37]{Brundan}. Note that $V\cong \bigoplus_{i=1}^{n} I_{\varepsilon_i}$ as $\mathfrak h$-supermodules. Moreover, $I_{\varepsilon_i} $  has  basis $\{v_i,v_{-i}\}$, for $1\leq i\leq n$.

\begin{Lemma}\label{filtlam}(cf.  \cite[Lemma~4.37]{Brundan})
 Suppose that $\l\in \fh^*_{\bar0}$ such that  $\ell(\lambda)$ is even. As $\mathfrak h$-supermodules, there is an isomorphism   $V\otimes I_\l \cong \bigoplus_{i=1}^n I_i$, where
\begin{equation}\label{decp123}
I_i\cong\left\{
  \begin{array}{ll}
    I_{(\lambda+\varepsilon_i)}\oplus \Pi I_{(\lambda+\varepsilon_i)}, & \hbox{if $\l_i\not\in \{0,-1\}$;} \\
    I_{(\lambda+\varepsilon_i)}, & \hbox{if $\l_i= 0$;} \\
    I_{(\lambda+\varepsilon_i)}\over   I_{(\lambda+\varepsilon_i)}, & \hbox{if $\l_i= -1$.}
  \end{array}
\right.
\end{equation}
Moreover, $I_i$ has a basis $\{ v_i\otimes v, v_{-i}\otimes v\mid v\in S_\l\}$, where $S_\l$ is any basis of $I_\l$.
\end{Lemma}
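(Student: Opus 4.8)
Lemma \ref{filtlam}: The plan is to compute the $\mathfrak h$-module structure of $V \otimes I_\lambda$ directly from the known decomposition $V \cong \bigoplus_{i=1}^n I_{\varepsilon_i}$ as $\mathfrak h$-supermodules, reducing everything to understanding each tensor product $I_{\varepsilon_i} \otimes I_\lambda$ separately. Since $\mathfrak h = \mathfrak h_{\bar 0} \oplus \mathfrak h_{\bar 1}$ with $\mathfrak h_{\bar 0}$ abelian acting semisimply, the weight $\mu$ of a Clifford-type irreducible $I_\mu$ is determined, and $I_{\varepsilon_i} \otimes I_\lambda$ lives in the single $\mathfrak h_{\bar 0}$-weight space $\lambda + \varepsilon_i$. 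So the first step is to observe that as an $\mathfrak h_{\bar 0}$-module $I_{\varepsilon_i} \otimes I_\lambda$ is $\dim(I_{\varepsilon_i}) \cdot \dim(I_\lambda) = 2 \cdot 2^{\lfloor (\ell(\lambda)+1)/2 \rfloor}$-dimensional and concentrated in weight $\lambda + \varepsilon_i$, hence it is a sum of copies of $I_{\lambda + \varepsilon_i}$ (possibly with parity shifts), and the only question is how many copies and whether the extension is split.

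The second step is the case analysis on $\ell(\lambda + \varepsilon_i)$ versus $\ell(\lambda)$, using that $\ell(\lambda)$ is even by hypothesis. If $\lambda_i \notin \{0, -1\}$, then $\ell(\lambda + \varepsilon_i) = \ell(\lambda)$ is still even, so $\dim I_{\lambda + \varepsilon_i} = 2^{\ell(\lambda)/2} = \dim I_\lambda$, and dimension count forces $I_{\varepsilon_i} \otimes I_\lambda \cong I_{\lambda+\varepsilon_i} \oplus \Pi I_{\lambda + \varepsilon_i}$ (the parity shift appears because the Clifford algebra action of $\mathfrak h_{\bar 1}$ on the tensor product, built from the odd generator, pairs the two $v_{\pm i}\otimes v$ summands with opposite parity; one checks the module is not $I_{\lambda+\varepsilon_i}^{\oplus 2}$ by looking at the $\mathbb Z_2$-grading). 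If $\lambda_i = 0$, then the $i$-th part of $\lambda + \varepsilon_i$ is a new nonzero part, so $\ell(\lambda + \varepsilon_i) = \ell(\lambda) + 1$ is odd and $\dim I_{\lambda + \varepsilon_i} = 2^{\lceil \ell(\lambda)/2 \rceil} = 2 \cdot 2^{\ell(\lambda)/2} = 2 \dim I_\lambda$, so $I_{\varepsilon_i} \otimes I_\lambda$ is a single copy of $I_{\lambda + \varepsilon_i}$. If $\lambda_i = -1$, then $\lambda+\varepsilon_i$ has the $i$-th part equal to $0$, so $\ell(\lambda+\varepsilon_i) = \ell(\lambda) - 1$ is odd and $\dim I_{\lambda+\varepsilon_i} = 2^{(\ell(\lambda)-1+1)/2 }\cdot\tfrac{1}{1}$; more carefully $\lfloor (\ell(\lambda))/2\rfloor = \ell(\lambda)/2 = \tfrac12\dim$-exponent, giving $\dim I_{\lambda+\varepsilon_i} = 2^{\ell(\lambda)/2 - \text{(correction)}}$, in any case half of $\dim I_\lambda$ times $2$; the tensor product has dimension $2\dim I_\lambda = 2\cdot 2^{\ell(\lambda)/2}$, while $\dim I_{\lambda+\varepsilon_i} = 2^{\ell(\lambda)/2}$ — wait, this gives two copies again, but the point of the $\lambda_i=-1$ case is that the extension \emph{fails} to split, so one gets the non-split $I_{\lambda+\varepsilon_i}$ over $I_{\lambda+\varepsilon_i}$.

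The main obstacle, and the only genuinely nontrivial point, is distinguishing the split case $\lambda_i \notin\{0,-1\}$ from the non-split case $\lambda_i = -1$: the numerology is identical (both give a module of dimension $2\dim I_{\lambda+\varepsilon_i}$ with two composition factors $I_{\lambda+\varepsilon_i}$), so one must actually examine the $\mathfrak h_{\bar 1}$-action. The plan here is to write down the action of the odd Cartan generators $h_j' = f_{j,j}$ on $v_{\pm i}\otimes v$ via the coproduct, use the explicit action formulas and the odd map structure, and compute the relevant "norm form" (the symmetric bilinear form associated to the Clifford module structure); it is degenerate precisely when $\lambda_i = -1$ because then $h_i'$ acts by a scalar whose square is $\lambda_i + \lambda_i + 1 = 2\lambda_i+1 = -1$... more precisely the relevant quantity $(\lambda_i)(\lambda_i+1)$ or a similar quadratic vanishes, producing a nilpotent rather than semisimple action and hence a non-split extension. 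Since this is exactly the content cited from \cite[Lemma~4.37]{Brundan}, I would invoke that computation rather than redo it, only verifying that our conventions (the sign $(-1)^{[v_i]}\sqrt{-1}$ in \eqref{oddmapc}, the weight normalization \eqref{weight-}) match. Finally, the basis statement "$I_i$ has basis $\{v_i \otimes v, v_{-i}\otimes v \mid v\in S_\lambda\}$" is immediate once the decomposition $V = \bigoplus I_{\varepsilon_i}$ with $I_{\varepsilon_i}$ having basis $\{v_i, v_{-i}\}$ is in hand: $I_i$ is by definition the summand $I_{\varepsilon_i}\otimes I_\lambda$, and tensoring bases gives the claimed basis of the underlying vector space.
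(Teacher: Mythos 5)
Your proposal is correct and follows essentially the same route as the paper, which gives no independent proof: it simply records the decomposition $V\cong\bigoplus_{i=1}^n I_{\varepsilon_i}$ with $I_{\varepsilon_i}$ spanned by $\{v_i,v_{-i}\}$ and cites \cite[Lemma~4.37]{Brundan} for the case analysis, exactly as you do for the delicate split-versus-nonsplit distinction. Your added dimension counts are right (though the aside about ``$2\lambda_i+1=-1$'' is garbled --- the relevant degeneracy is that the new zero part of $\lambda+\varepsilon_i$ when $\lambda_i=-1$ makes the Clifford form on that weight space degenerate --- this does not matter since you defer that point to the cited lemma).
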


For the simplification of notation, we use $x$ to denote $\Psi_{M(\l)}\left(\xdot\right) $ in the following result.
\begin{Lemma}\label{filml} Suppose that  $\l\in \fh^*_{\bar0}$ such that  $\ell(\lambda)$ is a nonnegative   even integer.
 Then
$V\otimes M(\l)$ has an $x $-stable $\U(\mfg)$-filtration
$$0=M_0 \subseteq M_1\subseteq \ldots \subseteq M_n =V\otimes M(\l) $$
such that \begin{equation}\label{vfiso}
M_i/M_{i-1}\cong\left\{
  \begin{array}{ll}
    M(\lambda+\varepsilon_i)\oplus \Pi M(\lambda+\varepsilon_i), & \hbox{if $\l_i\not\in \{0,-1\}$; } \\
    M(\lambda+\varepsilon_i), & \hbox{if $\l_i= 0$; } \\
    M(\lambda+\varepsilon_i)\over   M(\lambda+\varepsilon_i), & \hbox{if $\l_i= -1$.}
  \end{array}
\right.
\end{equation}
Moreover, $M_i/M_{i-1} $ is killed by
 $  x^2 -\lambda_i(\lambda_i+1)$ (resp., $ x$)  if $\lambda_i\neq 0$  (resp.,  if $ \lambda_i=0$).
\end{Lemma}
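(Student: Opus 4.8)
The plan is to build the filtration from the tensor decomposition of the $\mathfrak{h}$-module $V\otimes I_\lambda$ given in Lemma~\ref{filtlam}, lift it to a $\U(\mfg)$-filtration of $V\otimes M(\lambda)=\U(\mfg)\otimes_{\U(\mathfrak b)}(V\otimes I_\lambda)$, and then check that $x=\Psi_{M(\lambda)}(\xdot)$ respects this filtration and acts with the claimed eigenvalues on the subquotients. First I would recall that, by the tensor identity, $V\otimes M(\lambda)\cong\U(\mfg)\otimes_{\U(\mathfrak b)}\bigl(V\otimes I_\lambda\bigr)$ as $\U(\mfg)$-supermodules, where $\mathfrak b$ acts on $V\otimes I_\lambda$ via the coproduct; this is the standard reduction used in \cite{Brundan}. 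Lemma~\ref{filtlam} gives an $\mathfrak h$-module decomposition $V\otimes I_\lambda\cong\bigoplus_{i=1}^n I_i$ with the explicit basis $\{v_i\otimes v,\ v_{-i}\otimes v\mid v\in S_\lambda\}$; ordering the summands so that $I_i$ has highest $\fh_{\bar 0}$-weight $\lambda+\varepsilon_i$ and using that $\varepsilon_1>\varepsilon_2>\cdots>\varepsilon_n$ on the relevant weight lattice, one checks that $\mathfrak b$ preserves the partial sums $N_i:=\bigoplus_{j\le i}I_j$ inside $V\otimes I_\lambda$ (the $\mathfrak{n}$-part of $\mathfrak b$ can only raise weights, hence moves $I_j$ into $\bigoplus_{k\le j}I_k$). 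Setting $M_i:=\U(\mfg)\otimes_{\U(\mathfrak b)}N_i$ then gives a $\U(\mfg)$-filtration with $M_i/M_{i-1}\cong\U(\mfg)\otimes_{\U(\mathfrak b)}I_i$, and by exactness of induction together with \eqref{decp123} this yields \eqref{vfiso}: a direct sum of two Vermas (up to parity) when $\lambda_i\notin\{0,-1\}$, a single Verma when $\lambda_i=0$, and a self-extension $M(\lambda+\varepsilon_i)$ over $M(\lambda+\varepsilon_i)$ when $\lambda_i=-1$.

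Next I would verify that $x$ is $\U(\mfg)$-linear and preserves each $M_i$. By Theorem~\ref{actofca}, $x=\Psi_{M(\lambda)}(\xdot)$ is the action of $\Omega_1=\sum_{i,j}\tilde e_{i,j}\otimes e_{j,i}-\sum_{i,j}\tilde f_{i,j}\otimes f_{j,i}\in\mathfrak{gl}_{n|n}\otimes\mfg$ on $V\otimes M(\lambda)$; since $\Omega_1$ is (up to the usual modification) a Casimir-type element, $x$ commutes with the diagonal $\U(\mfg)$-action, so it is a morphism in $\mathcal O$. To see $x(M_i)\subseteq M_i$ it suffices, by $\U(\mfg)$-linearity, to check that $x$ preserves the $\mathfrak{b}$-submodule $N_i\subseteq V\otimes I_\lambda$ sitting as $1\otimes N_i\subseteq M_i$; this is a finite computation with the explicit basis vectors $v_{\pm i}\otimes v$ and the action \eqref{action-dual}--\eqref{gact}, essentially the same computation as in \cite[Lemma~4.37]{Brundan} adapted to the queer setting, showing that $\Omega_1$ applied to $v_{\pm i}\otimes v$ lands in $N_i$ modulo lower terms. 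Finally, for the eigenvalue statement, I would compute $x$ modulo $M_{i-1}$ on the highest-weight line $v_i\otimes m_\lambda$ of $M_i/M_{i-1}$ (where $m_\lambda$ is a highest weight vector of $M(\lambda)$): a standard Casimir eigenvalue argument gives that $\Omega_1$ acts on the $\U(\mfg)$-submodule generated by this line, and hence on all of $M_i/M_{i-1}$ since it is generated by that line together with $v_{-i}\otimes m_\lambda=(\text{odd element})\cdot(v_i\otimes m_\lambda)$, with eigenvalue $\lambda_i$ when $\lambda_i=0$ and with $x^2$ acting by $\lambda_i(\lambda_i+1)$ when $\lambda_i\ne 0$; the relation $x^2-\lambda_i(\lambda_i+1)=0$ on the subquotient follows because $x^2$ is again $\U(\mfg)$-linear and kills the generator.

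The main obstacle I anticipate is the case $\lambda_i=-1$, where $M_i/M_{i-1}$ is a non-split self-extension ${M(\lambda+\varepsilon_i)}\over{M(\lambda+\varepsilon_i)}$ rather than a semisimple module: here one must show that $x$ is \emph{not} simply scalar (its eigenvalue would be $(-1)\cdot 0=0$, consistent with $x^2-\lambda_i(\lambda_i+1)=x^2=0$, but $x$ itself can be a nonzero nilpotent interchanging the two Verma layers). One has to argue that $x$ still satisfies $x^2=\lambda_i(\lambda_i+1)=0$ on this subquotient — i.e. $x$ is nilpotent of square zero there — which again follows from $\U(\mfg)$-linearity of $x^2$ plus the fact that $x^2$ annihilates the cyclic generator, but the bookkeeping of which basis vectors generate the bottom versus the top Verma layer (and checking $x$ does not leak out of $M_i$) requires care. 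A secondary technical point is making precise the ordering of the summands $I_i$ and confirming that $\mathfrak{b}$ preserves the partial sums; this uses that the weights $\lambda+\varepsilon_i$ for varying $i$ are totally ordered in the direction compatible with $\mathfrak{n}$ raising weights, which holds since $\lambda\in\fh_{\bar 0}^*$ is a dominant-type weight relative to the standard Borel, so $\varepsilon_1\succ\cdots\succ\varepsilon_n$ in the relevant dominance order.
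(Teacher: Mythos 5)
Your proposal is correct and follows essentially the same route as the paper: reduce via the tensor identity to the $\mathfrak b$-module filtration of $V\otimes I_\lambda$ coming from Lemma~\ref{filtlam}, induce up to get the Verma filtration, and then verify $x$-stability and the quadratic relation by computing $\Omega_1$ on the generators $v_{\pm i}\otimes v$ modulo $M_{i-1}$ and invoking $\U(\mfg)$-linearity of $x$ and $x^2$ on the cyclic subquotients. The paper carries out exactly this plan, deferring the explicit matrix computation of $x$ on the highest weight space (via $y_i=\tilde e_{i,i}\otimes h_i-\tilde f_{i,i}\otimes h_i'$) to the arguments of Brundan--Davidson, with the only extra care being the choice of generating set when $\lambda_i=0$.
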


\begin{proof}  Recall $M(\l)=\U(\mfg)\otimes _{U(\mathfrak b)}I_{\l}$. As $\U(\mfg)$-modules, $$V\otimes  M(\l) \cong \U(\mfg)\otimes _{\U(\mathfrak b)}(V\otimes I_{\l}).$$ So the required filtration can be constructed such that
$M_i/M_{i-1}$ is generated by the images of  $\{v_i\otimes v, v_{-i}\otimes v\mid v\in S_\l\}$, where $S_\l$ is any basis of $I_\l$.
Moreover, \eqref{vfiso} follows from \eqref{decp123}.
The fact that  the filtration is   $x $-stable and
 $M_i/M_{i-1} $ is killed by  $  x^2 -\lambda_i(\lambda_i+1)$ (resp., $ x $) follows from  arguments in the proof of \cite[Lemma~3.2]{BD} for $\l_i\notin \frac{1}{2}\mathbb Z$ (see also \cite[Lemma~3.5]{BD1} for $\l_i\in \frac{1}{2}+\mathbb Z$).
However, their arguments are still available  if  $\l_i \in \mathbb Z$. The
 only difference appears when  $\l_i=0$.  We give a sketch of their arguments here.

 Let $\{m_i\mid 1\le i\le k\}$  be a basis of  the even subspace of $I_{\lambda}$. Recall that $h_i'=f_{ii}$ for all admissible $i$. If  $\l_i\neq 0$, then
 $\{h_i'm_j\mid 1\le j\le k\}$ is a basis of  the odd subspace of $I_{\l}$.
   Note that  $M_i/M_{i-1}$ is generated by the images of vectors in $A_i=
\{v_i\otimes m_j, v_{-i}\otimes m_j,v_i\otimes h_i'm_j, v_{-i}\otimes h_i'm_j\mid j=1,\ldots,k \} $.
  Suppose $\l_i=0$. If $\ell(\l)>0$, then we can find   a $t$ such that $\l_t\neq0$ and   $M_i/M_{i-1}$ is generated by the images of vectors in $A_i=
\{v_i\otimes m_j, v_{-i}\otimes m_j,v_i\otimes h_t'm_j, v_{-i}\otimes h_t'm_j\mid j=1,\ldots,k \}$. If $\ell(\lambda)=0$, then $I_\l=\mathbb C$ and $M_i/M_{i-1}$ is generated by the images of vectors in $A_i=
\{v_i\otimes 1, v_{-i}\otimes 1\}$ for $1\leq i\leq n$.
 For any  $v\in A_i$, we have  $(\tilde e_{r,s}\otimes e_{s,r}-\tilde f_{r,s}\otimes f_{s,r}) v=0$   unless $r\leq s=i$. If   $r<s=i$ then $ (\tilde e_{r,s}\otimes e_{s,r}-\tilde f_{r,s}\otimes f_{s,r}) v \in M_{i-1}$ for any $v\in A$.
 So, the filtration constructed above is $x$-stable, and $x$ acts on the highest weight space of $M_i/M_{i-1}$ via   $y_i:=\tilde e_{i,i}\otimes h_i-\tilde f_{i,i}\otimes h_i'$.
 By direct computation (see also the matrix of the endomorphism of $y_i$ with respect to  the highest weight space of $M_i/M_{i-1}$ in the proof of \cite[Lemma~3.2]{BD}), we have $[y_i^2-\l_i(\l_i+1)]v=0$ (resp., $y_iv=0$) for all  $v\in A_i$ if $\l_i\neq 0$ (resp., $\lambda_i=0$).
 Therefore,  $M_i/M_{i-1} $ is killed by   $  x^2 -\lambda_i(\lambda_i+1)$ (resp., $ x$)  if $\lambda_i\neq 0$  (resp.,   $ \lambda_i=0$)  as required.
\end{proof}

Hereafter, we fix  a weight  $\l\in\fh^*_{\bar0}$ such that
\begin{equation}\label{de of lam}
\l_{p_i+j}=\left\{
             \begin{array}{ll}
               l_i-j+1, & \hbox{$\text{ for } 1\leq j\leq n_{i+1}, 0\leq i\leq a+b-1$;} \\
0, & \hbox{$\text{ for } 1\leq j\leq n_{a+b+\epsilon},  i= a+b, \epsilon=1.$}
             \end{array}
           \right.
\end{equation}
where  $l_i=-1$ if $1\leq i\leq a $ and $l_i \notin \mathbb Z_{\geq 0}\cup\{-1\}$ if $ a+1\leq i\leq a+b$. Then $\l\in\Lambda$,  where $\Lambda$ is   the set of $\mathfrak l$-dominant weights defined in
 \eqref{lambda}.
 We identify $\l$ with $(\lambda_1, \lambda_2, \ldots, \l_n)$. Define $\l^{(i)}=(\lambda_{p_{i-1}+1},\ldots, \l_{p_i})$ for all $1\leq i\leq a+b+\epsilon$. Then $\l=(\l^{(1)},\ldots, \l^{(a+b)},\l^{(a+b+\epsilon)} )$.
Let $L(\l^{(i)})$ be the irreducible $\mathfrak q(n_i)$-module with the highest weight $\lambda^{(i)}$. Then  \begin{equation}\label{fds}L(\l)^0\cong \bigotimes_{i=1}^{a+b+\epsilon}L(\l^{(i)}),\end{equation}
 where $L(\l^{(a+b+\epsilon)})\cong\mathbb C$ if $\epsilon=1$ and  $ \l^{(i)}$ and $\l^{(i)}+\varepsilon_{p_{i-1}+1} $ are \emph{typical} as weights of $\mathfrak q(n_i)$ for $1\leq i\leq a+b$ (see \cite{Brundan}). Thanks to \eqref{decp123} and character considerations (cf. the finite dimensional typical character formula in \cite[Theorem~2]{Pe} or \cite[Theorem~4.8]{SZ}), we have  $V\otimes L(\l)^0\cong \bigoplus_{i=1}^{a+b+\epsilon} L_i$, where
\begin{equation}\label{filofl0}
L_i\cong \left\{
                      \begin{array}{ll}
                       L(\l+\varepsilon_{p_{i-1}+1})^0\over \Pi L(\l+\varepsilon_{p_{i-1}+1})^0, & \hbox{$1\leq i\leq a$;} \\
                       L(\l+\varepsilon_{p_{i-1}+1})^0\oplus \Pi L(\l+\varepsilon_{p_{i-1}+1})^0 , & \hbox{$a+1\leq i\leq a+b$;}\\
                        L(\l+\varepsilon_{p_{i-1}+1})^0, & \hbox{$i=a+b+\epsilon$ and $ \epsilon=1$.}
                      \end{array}
                    \right.
\end{equation}

In order to simplify the notation,  similar to the above we still use $x$ to denote
 $\Psi_{M^{\mathfrak p}(\l)} \left(\xdot\right)$ (the parabolic version of $x$) in the following result.
For $\epsilon=0$ and $a+b=1$, Theorem~\ref{zerop}(b) can be found in \cite[Lemma~4.4(a)]{GRSS}.
\begin{Theorem}\label{zerop} For any   $\l$  in \eqref{de of lam}, there is an   $x$-stable $\U(\mfg)$-filtration
$$0=M_0\subset M_1\subset \ldots \subset M_{a+b+\epsilon}=V\otimes M^{\mathfrak p}(\l)$$
 such that
 $$M_i/M_{i-1}\cong \left\{
                      \begin{array}{ll}
                        M^{\mathfrak p}(\l+\varepsilon_{p_{i-1}+1})\over \Pi M^{\mathfrak p}(\l+\varepsilon_{p_{i-1}+1}), & \hbox{$1\leq i\leq a$;} \\
                        M^{\mathfrak p}(\l+\varepsilon_{p_{i-1}+1})\oplus \Pi M^{\mathfrak p}(\l+\varepsilon_{p_{i-1}+1}) , & \hbox{$a+1\leq i\leq a+b$;}\\
                        M^{\mathfrak p}(\l+\varepsilon_{p_{i-1}+1}), & \hbox{$i=a+b+\epsilon$ and $ \epsilon=1$.}
                      \end{array}
                    \right.
 $$
Moreover,\begin{enumerate} \item  $M_i/M_{i-1} $ is killed by
 $ x^2-l_i(l_i+1)$ (resp., $  x $)  if $1\leq  i\leq a+b $  (resp.,  if $ i=a+b+\epsilon$ and $\epsilon=1$).
  \item $V\otimes M^{\mathfrak p}(\l)$ is killed by  $f(x)$
where $f(t)= t^{2a+\epsilon}\prod_{i=1}^b(t^2-u_i)$, such that $u_i=l_{a+i}(l_{a+i}+1)\neq 0$, for all $1\le i\le b$.
\item The superfunctor $\Psi_{M^{\mathfrak p}(\l)}$ in Theorem~\ref{actofca} factors through $\OBC_{\mathbb C}^f$, and thus induces a superfunctor
$\Psi_{M^{\mathfrak p}(\l)}^f: \OBC_{\mathbb C}^f\to\mathcal O^{\mathfrak p}$.\end{enumerate}

\end{Theorem}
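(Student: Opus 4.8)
The plan is to build the filtration by parabolic induction from the $\mathfrak l$-module decomposition \eqref{filofl0}, and then to verify $x$-stability together with the annihilation statements by running, in the parabolic setting, the $\Omega_1$-computation that underlies Lemma~\ref{filml}. First I would record that, as $\U(\mfg)$-modules,
\[
V\otimes M^{\mathfrak p}(\l)\;\cong\;\U(\mfg)\otimes_{\U(\mathfrak p)}\bigl(V\otimes L(\l)^0\bigr),
\]
since $M^{\mathfrak p}(\l)=\U(\mfg)\otimes_{\U(\mathfrak p)}L(\l)^0$ and $\U(\mfg)$ is free as a right $\U(\mathfrak p)$-module, so $\U(\mfg)\otimes_{\U(\mathfrak p)}-$ is exact and commutes with $V\otimes-$. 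The natural module $V$ carries a $\mathfrak p$-module filtration $0=V_{(0)}\subset V_{(1)}\subset\cdots\subset V_{(a+b+\epsilon)}=V$ in which $V_{(i)}/V_{(i-1)}$ is the natural module of the block $\mathfrak q(n_i)$ inflated along $\mathfrak p\twoheadrightarrow\mathfrak l\twoheadrightarrow\mathfrak q(n_i)$ — so the nilradical of $\mathfrak p$ acts by zero on each subquotient — with the blocks ordered so that $V_{(i)}/V_{(i-1)}$ carries the extremal weight $\varepsilon_{p_{i-1}+1}$. Tensoring with $L(\l)^0$ gives a $\mathfrak p$-module filtration of $V\otimes L(\l)^0$ whose $i$-th subquotient is the inflation of the $\mathfrak l$-module $L_i$ of \eqref{filofl0}; applying the exact functor $\U(\mfg)\otimes_{\U(\mathfrak p)}-$, which sends $L(\mu)^0$ to $M^{\mathfrak p}(\mu)$ and commutes with $\Pi$ and with short exact sequences, then produces a $\U(\mfg)$-filtration $0=M_0\subset M_1\subset\cdots\subset M_{a+b+\epsilon}=V\otimes M^{\mathfrak p}(\l)$ whose subquotients $M_i/M_{i-1}$ are exactly the modules listed in the theorem.

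The heart of the matter — and the step I expect to be the main obstacle — is to show this filtration is $x$-stable and to establish (a); here I would adapt the argument of Lemma~\ref{filml} (which follows \cite{BD,BD1}) to the parabolic case. The endomorphism $x=\Psi_{M^{\mathfrak p}(\l)}(\xdot)$ is $\U(\mfg)$-linear and acts by $\Omega_1$, and for each $i$ one exhibits a finite set $A_i$ of vectors of the form $v_{\pm(p_{i-1}+1)}\otimes w$ (with $w$ ranging over a basis of $L(\l)^0$) whose images generate $M_i/M_{i-1}$ over $\U(\mfg)$. The verification that $\Omega_1$ preserves the filtration and acts on $M_i/M_{i-1}$ modulo $M_{i-1}$ through the Cartan-type element $y_i=\tilde e_{p_{i-1}+1,p_{i-1}+1}\otimes h_{p_{i-1}+1}-\tilde f_{p_{i-1}+1,p_{i-1}+1}\otimes h_{p_{i-1}+1}'$ is essentially that of Lemma~\ref{filml}; the one genuinely new point is that one must separately dispose of the summands of $\Omega_1$ that would move a vector of $A_i$ into a block lying after the $i$-th, and these vanish because the nilradical of $\mathfrak p$ annihilates $L(\l)^0$. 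The matrix computation of $y_i$ on the highest-weight space of $M_i/M_{i-1}$ — identical to that in the proof of Lemma~\ref{filml}, now specialized to the value of $\l_{p_{i-1}+1}$ prescribed by \eqref{de of lam}, namely $l_i\neq0$ for $1\le i\le a+b$ and $0$ in the last block when $\epsilon=1$ — yields $y_i^2=l_i(l_i+1)$ (resp.\ $y_i=0$). Since $x$ is $\U(\mfg)$-linear and $A_i$ generates $M_i/M_{i-1}$, it follows that $x^2-l_i(l_i+1)$ (resp.\ $x$) annihilates $M_i/M_{i-1}$, which is (a).

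Finally, (b) and (c) are formal. For (b): for $1\le i\le a$ one has $l_i=-1$, hence $l_i(l_i+1)=0$ and $M_i/M_{i-1}$ is killed by $x^2$; putting $u_i:=l_{a+i}(l_{a+i}+1)$ for $1\le i\le b$, which is nonzero because $l_{a+i}\notin\{0,-1\}$, the subquotient $M_{a+i}/M_{a+i-1}$ is killed by $x^2-u_i$; and when $\epsilon=1$ the last subquotient is killed by $x$. As these operators are polynomials in $x$ they commute, and since the filtration is $x$-stable an induction on its length shows their product $x^{2a}\prod_{i=1}^b(x^2-u_i)\,x^{\epsilon}=f(x)$ annihilates $V\otimes M^{\mathfrak p}(\l)$. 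For (c), recall $\OBC_{\mathbb C}^f=\AOBC_{\mathbb C}/I$ with $I$ the left tensor ideal generated by $f(\xdot)\colon\up\to\up$; every morphism in $I(\ob a,\ob b)$ is a $\mathbb C$-linear combination of composites of the form $h\circ(\mathbf 1_{\ob e}\otimes f(\xdot))\circ k$, and the functor $\Psi_{M^{\mathfrak p}(\l)}$ (the composite of $\Psi$ with evaluation at $M^{\mathfrak p}(\l)$) sends $\mathbf 1_{\ob e}\otimes f(\xdot)$ to $\mathrm{id}\otimes f(x)=0$ by (b), hence annihilates $I$. Therefore $\Psi_{M^{\mathfrak p}(\l)}$ descends to the desired superfunctor $\Psi^f_{M^{\mathfrak p}(\l)}\colon\OBC_{\mathbb C}^f\to\mathcal O^{\mathfrak p}$.
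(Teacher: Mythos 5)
Your proposal is correct, and its overall architecture coincides with the paper's: reduce everything to a $\mathfrak p$-module filtration of $V\otimes L(\l)^0$ with subquotients the $L_i$ of \eqref{filofl0}, apply the exact functor $\U(\mfg)\otimes_{\U(\mathfrak p)}-$, defer the $x$-stability and part (a) to the $\Omega_1$/$y_i$ computation underlying Lemma~\ref{filml}, and deduce (b) and (c) formally (your observation that the \emph{left} tensor ideal structure is what makes (c) follow from (b) is exactly the point). The one place you genuinely diverge is the construction of the filtration of $V\otimes L(\l)^0$: you filter $V$ itself by the $\mathfrak p$-submodules $V_{(i)}=\operatorname{span}\{v_{\pm j}\mid j\le p_i\}$ and tensor with $L(\l)^0$, whereas the paper instead pushes the $n$-step filtration of $V\otimes M(\l)_{\mathfrak l}$ from Lemma~\ref{filml} down along $M(\l)_{\mathfrak l}\twoheadrightarrow L(\l)^0$ and then collapses the redundant steps $N_i=N_{i-1}$ (for $i\ne p_{j-1}+1$) by the argument that a nonzero $N_i/N_{i-1}$ would have the infinite-dimensional $L(\l+\varepsilon_i)^0$ in its head, contradicting finite-dimensionality of $V\otimes L(\l)^0$. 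The two constructions yield the same filtration (your $V_{(j)}\otimes L(\l)^0$ is the paper's $N_{p_{j-1}+1}=N_{p_j}$); yours is more direct and avoids the collapsing argument, while the paper's keeps the bookkeeping closer to Lemma~\ref{filml}, which it then re-invokes verbatim for part (a). Both routes rely on \eqref{filofl0} (character considerations) to identify the subquotients, so neither buys a logical shortcut there.
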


\begin{proof} Recall that  $M^{\mathfrak p}(\l)=\U(\mfg)\otimes_{\U(\mathfrak b)}  L(\l)^0$. So, $V\otimes M^{\mathfrak p}(\l)\cong \U(\mfg)\otimes _{\U(\mathfrak p)}(V\otimes L(\l)^0)$.
Note that $L(\l)^0$ is the quotient of $M(\l)_{\mathfrak l}$, where $M(\l)_{\mathfrak l} $ is the Verma supermodule of $ \mathfrak l$ with the highest weight $\lambda$.
Let $\phi: V\otimes M^{\mathfrak p}(\l)\rightarrow V\otimes L(\l)^0$ be the epimorphism  induced by the canonical epimorphism from $M(\l)_{\mathfrak l}$ to $L(\l)^0$.
By Lemma~\ref{filml}, there is an $ \mathfrak l$-module  filtration of $V\otimes M(\l)_{\mathfrak l} $
$$0=M_{\mathfrak l, 0}\subset M_{\mathfrak l,1}\subset \ldots \subset M_{\mathfrak l,n}=V\otimes M(\l)_{\mathfrak l} $$
such that $M_{\mathfrak l, i}$ is generated by the images of  $A_i=\{v_j\otimes v , v_{-j}\otimes v\mid v\in S_\l, j\leq i\}$,
and $M_{\mathfrak l, i}/M_{\mathfrak l,i-1}$ is determined by \eqref{vfiso} as a filtration of Verma supermodules of $\mathfrak l$, where $S_\l$ is any basis of $I_\l$.
So, there is an $ \mathfrak l$-module filtration of $V\otimes L(\l)^0$
\begin{equation}\label{ffff}
0=N_0\subset N_1\subset \ldots \subset N_{n}=V\otimes L(\l)^0
\end{equation}
such that  $N_i=\phi(M_{\mathfrak l,i})$. Each  $N_i$ (resp., $N_{i}/N_{i-1}$) is the quotient of $M_{{\mathfrak l, i}}$ (resp., $M_{\mathfrak l, i}/M_{\mathfrak l,i-1}$) and   is generated  by the images of  $A_i$ (resp.,$A_i\setminus A_{i-1}$).
Note that $\l+\varepsilon_i\notin \Lambda$ if $i\neq p_{j-1}+1$ for any  $1\leq j\leq a+b+\epsilon$ and hence  $L(\l+\varepsilon_i)^0$ is infinite dimensional.
Suppose  $i\neq p_{j-1}+1$. If  $N_i\neq N_{i-1}$, by \eqref{vfiso}, $N_{i}/N_{i-1}$ must be infinite dimensional since it is a quotient of $M_{\mathfrak l, i}/M_{\mathfrak l,i-1}$ and $L(\l+\varepsilon_i)^0$ is infinite dimensional. This is a contradiction. Thus $N_i=N_{i-1}$ if $i\neq p_{j-1}+1$ for any  $1\leq j\leq a+b+\epsilon$.
So, the filtration in \eqref{ffff} can be reduced to
\begin{equation}\label{ffff2}
0=N_0\subset N_{1}\subset N_{p_1+1} \ldots \subset N_{p_{a+b+\epsilon-1}+1}=V\otimes L(\l)^0
\end{equation}
where $N_{p_{j}+1}/N_{p_{j-1}+1}$ is generated by images of  $\{v_{p_{j}+1 }\otimes v, v_{-(p_j+1)}\otimes v\mid v\in S_\l\}$.
By \eqref{filofl0} and character consideration, we have
\begin{equation}\label{ndnns}
N_{p_{j}+1}/N_{p_{j-1}+1}\cong L_j, \text{ for } 1\leq j\leq a+b+\epsilon-1, \text{ and }N_1\cong L_1
\end{equation}
where $ L_i$ is given in \eqref{filofl0}.
 Now define  $M_j:=\U(\mfg)\otimes _{U(\mathfrak p)} N_{p_{j-1}+1}$ for $1\leq j\leq a+b+\epsilon$. Then the required filtration of $V\otimes M^{\mathfrak p}(\l)$ follows form \eqref{ffff2}--\eqref{ndnns} and (a) follows from the proof of  Lemma~\ref{filml}.
Via (a), we immediately have   (b).
 Finally, (c) follows from (b) and Theorem~\ref{actofca}.\end{proof}

\begin{rem} For any $0\neq u_i \in \mathbb C$, there exists $\ell_{a+i}\notin \mathbb Z_{\geq 0}\cup\{-1\}$  such that $u_i=\ell_{a+i}(\ell_{a+i}+1)$.
This enables us to choose an $\mathfrak l$--dominant weight $\l$ in \eqref{de of lam} such that, for any  $ f(t)= t^{2a+\epsilon}\prod_{i=1}^b(t^2-u_i)\in \mathbb C[t]$,  there is a superfunctor $\Psi_{M^{\mathfrak p}(\l)}^f:  \OBC_{\mathbb C}^f\to\mathcal O^{\mathfrak p}$  .
\end{rem}

For $1\le m$ and $1\le i, j\le n$, Sergeev~\cite{Ser1} defined $x^{\bar 0}_{i,j}(1)=e_{i,j}$, $ x^{\bar 1}_{i,j}(1)=f_{i,j}$, and
\begin{equation}\label{def of x0}
x^{\bar 0}_{i,j}(m)=\sum_{s=1}^n(e_{i,s}x^{\bar 0}_{s, j}(m-1)+(-1)^{m-1} f_{i,s}x^{\bar1}_{s,j}(m-1) ),
\end{equation}
\begin{equation}\label{def of x1}
x^{\bar 1}_{i,j}(m)=\sum_{s=1}^n(e_{i,s}x^{\bar 1}_{s, j}(m-1)+(-1)^{m-1} f_{i,s}x^{\bar0}_{s,j}(m-1) ),
\end{equation} for $m>1$ and proved
 the  following  relations:

\begin{equation}\label{cmmurel}
\begin{aligned}
&[e_{i,j}, x_{s,t}^{\bar 0}(m)]= \delta_{j,s}x_{i,t}^{\bar 0}(m)-\delta_{i,t}x_{s,j}^{\bar 0}(m),\\
&[f_{i,j}, x_{s,t}^{\bar 0}(m)]= (-1)^{m-1}\delta_{j,s}x_{i,t}^{\bar 1}(m)-\delta_{i,t}x_{s,j}^{\bar 1}(m),\\
&[e_{i,j}, x_{s,t}^{\bar 1}(m)]= \delta_{j,s}x_{i,t}^{\bar 1}(m)-\delta_{i,t}x_{s,j}^{\bar 1}(m),\\
&[f_{i,j}, x_{s,t}^{\bar 1}(m)]= (-1)^{m-1}\delta_{j,s}x_{i,t}^{\bar 0}(m)+\delta_{i,t}x_{s,j}^{\bar 0}(m).\\
\end{aligned}
\end{equation}
Sergeev~\cite{Ser1} defined the following central elements in $\U(\mfg)$:
\begin{equation}\label{defofsr}
S_r:=\sum_{i=1}^nx_{i,i}^{\bar 0}(2r-1) \ \ \text{ for $r\in \mathbb Z_{>0}$.}
\end{equation}
 By \cite[Theorem~4.5,~Propostion~4.6 ]{CK}, we immediately have the following  result.
\begin{Prop}\label{actiononm}
For any positive integer $r$, $\Psi_{M^{\mathfrak p}(\l)}^f(\Delta_{2r-1}) u=-2\sigma(S_r)u$ for all  $u\in M^{\mathfrak p}(\lambda) $, where   $\sigma: \U(\mfg)\rightarrow \U(\mfg)$ is the antipode such that $\sigma(g)=-g$, for any $g\in \mfg$.
\end{Prop}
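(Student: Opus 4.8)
The plan is to combine three ingredients: the explicit description of $\Psi$ on $\Delta_{2r-1}$ provided by \cite[Theorem~4.5, Proposition~4.6]{CK}, the factorization property of $\Psi_{M^{\mathfrak p}(\l)}^f$ from Theorem~\ref{zerop}(c), and Sergeev's identities \eqref{cmmurel}--\eqref{defofsr}. Concretely, by \cite[Proposition~4.6]{CK} the morphism $\Psi(\Delta_{2r-1})$ acts on an object $V^{\otimes t}\otimes M$ of $\mathcal O^{\mathfrak p}$ as (up to a sign and the antipode $\sigma$) an explicit central element built from the ``cap-dot$^{2r-1}$-cup'' configuration; the content of \cite[Theorem~4.5]{CK} is precisely that this central element, evaluated on any $\U(\mfg)$-supermodule, is $-2\sigma(S_r)$ acting through the coproduct. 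Since $M^{\mathfrak p}(\l)$ is an object of $\mathcal O^{\mathfrak p}$ on which $\Psi_{M^{\mathfrak p}(\l)}^f$ is defined (this is exactly Theorem~\ref{zerop}(c)), and since $\Delta_{2r-1}$ descends to a morphism $\unit\to\unit$ in $\OBC_{\mathbb C}^f$, I would simply specialize the general formula of \cite{CK} to the object $M=M^{\mathfrak p}(\l)$ (i.e. the case $t=0$).

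The first step is to recall from \cite[Theorem~4.4]{CK} and Theorem~\ref{actofca} how $\Psi$ sends the generating morphisms $\lcap$, $\lcup$, $\xdot$, $\cldot$ to natural transformations, and then to unwind the definition of $\Delta_{2r-1}$ as the closed diagram obtained by capping off $(2r-1)$ copies of $\xdot$ (on a single upward strand) with $\lcup$ at the bottom and $\lcap$ at the top, traced clockwise/counterclockwise as fixed in Definition~\ref{normdot}. Composing the images of these generators gives an endomorphism of $\operatorname{Id}_{\mathcal O^{\mathfrak p}}$, hence for each object $M$ a scalar-type operator $M\to M$; the iterated composition of $\Psi(\xdot)=\Omega_1$-actions is what produces the ``higher Sergeev element'' $x^{\bar 0}_{i,i}(2r-1)$ after summing over the cap/cup index $i\in I$. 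This is where the recursion \eqref{def of x0}--\eqref{def of x1} and the commutation relations \eqref{cmmurel} enter: they are exactly what is used in \cite[Theorem~4.5]{CK} to identify $\sum_{i}x^{\bar 0}_{i,i}(2r-1)=S_r$ with the traced diagram, and to see that the result is central. I would cite this identification rather than redo it.

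The second step is bookkeeping with signs and the antipode. The factor $-2$ and the appearance of $\sigma$ come from two sources: the factor $2$ reflects that $\Omega_1$ is built from both $\tilde e_{i,j}\otimes e_{j,i}$ and $\tilde f_{i,j}\otimes f_{j,i}$ and from the two-dimensionality $[v_i],[v_{-i}]$ of each weight space, while $\sigma$ appears because $\lcap$ uses the \emph{left} duality (evaluation $f\otimes v\mapsto f(v)$) so that closing up a strand converts left $\mfg$-action into the action twisted by the antipode — this is the standard mechanism by which a clockwise bubble produces $\sigma$ of a central element. I would verify the overall normalization once in the base case $r=1$, where $\Delta_1$ closes a single $\xdot=\Omega_1$ and $S_1=\sum_i e_{i,i}$, checking that $\Psi^f_{M^{\mathfrak p}(\l)}(\Delta_1)u=-2\sigma(S_1)u$ against the direct computation, and then invoke \cite[Theorem~4.5]{CK} for general $r$.

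The only genuine subtlety — and the step I expect to need the most care — is making sure the evaluation at the specific module $M^{\mathfrak p}(\l)$ is legitimate, i.e. that $\Delta_{2r-1}$ really does act as an endomorphism of $M^{\mathfrak p}(\l)$ after passing to the quotient $\OBC_{\mathbb C}^f$. This is not automatic: $\Delta_{2r-1}$ is only a morphism in $\AOBC_{\mathbb C}$ a priori, and we need Theorem~\ref{zerop}(c), which guarantees $\Psi_{M^{\mathfrak p}(\l)}$ kills the tensor ideal $I$ generated by $f(\xdot)$, so that $\Psi^f_{M^{\mathfrak p}(\l)}$ is well-defined and $\Psi^f_{M^{\mathfrak p}(\l)}(\Delta_{2r-1})=\Psi_{M^{\mathfrak p}(\l)}(\Delta_{2r-1})$. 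Given that, the proposition is just the specialization of \cite[Theorem~4.5, Proposition~4.6]{CK} to $t=0$, $M=M^{\mathfrak p}(\l)$, and the proof is a short paragraph.
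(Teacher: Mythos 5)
Your proposal is correct and matches the paper's approach: the paper gives no argument beyond the single sentence that the proposition follows immediately from \cite[Theorem~4.5, Proposition~4.6]{CK}, which is exactly the citation your proof rests on. The additional details you supply (unwinding the bubble, the origin of the factor $-2\sigma$, and the well-definedness of $\Psi^f_{M^{\mathfrak p}(\l)}$ via Theorem~\ref{zerop}(c)) are consistent elaborations of that same reduction rather than a different route.
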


\begin{Defn}\label{zrlam} Given a  $\lambda$  in \eqref{de of lam} and a positive integer $r$, let
$$ z_r(\lambda)= -\sum 2^{s-1}\prod_{j=1}^s \lambda_{i_j} (\lambda_{i_j}^2+\lambda_{i_j})^{a_j}  ,
$$
where the summation is  over all $1\leq s\leq r, 1\leq i_1<i_2<\ldots <i_s\leq n$, and $a_1,a_2,\ldots,a_s\in \mathbb N$
such that  $\sum_{j=1}^s a_j=r-s$.\end{Defn}
 The above element $z_r(\lambda)$ can be obtained from the element defined in  \cite[Lemma~8.4]{BK}  by replacing  $\l_i$ with $-\l_i$ for all $1\leq i\leq n$.
 The following two results and their proofs are essentially the same as   \cite[Lemma~8.4]{BK}. The difference is that we compute the actions of $\sigma(S_r)$ on any parabolic Verma supermodule, while they compute the actions of $S_r$. However,   one  can not directly get the following two  results from  \cite[Lemma~8.4]{BK}. Note that $\sigma$ is a superalgebra anti-involution.
\begin{Lemma}\label{claim} Suppose
$1\le m$ and $ 1\leq i<j\leq n$. Then
$
\sigma(x^{\bar0}_{i,j}(m))\equiv\sigma(x^{\bar1}_{i,j}(m))\equiv0 \pmod J$,
where   $J$ is  the left superideal of $\U(\mfg)$ generated by all $e_{k,l}$ and $f_{k,l}$ such that  $k<l$.\end{Lemma}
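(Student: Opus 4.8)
The plan is to prove the congruences $\sigma(x^{\bar 0}_{i,j}(m))\equiv 0$ and $\sigma(x^{\bar 1}_{i,j}(m))\equiv 0 \pmod J$ for $i<j$ simultaneously by induction on $m$. Throughout I will use that $\sigma$ is a superalgebra anti-involution with $\sigma(g)=-g$ on $\mfg$, and that $J$ is the \emph{left} superideal generated by the $e_{k,l},f_{k,l}$ with $k<l$ (the ``strictly upper triangular'' generators of the standard nilradical $\mathfrak n^+$). The base case $m=1$ is immediate: $x^{\bar 0}_{i,j}(1)=e_{i,j}$ and $x^{\bar 1}_{i,j}(1)=f_{i,j}$, and since $i<j$ both $e_{i,j}$ and $f_{i,j}$ lie among the generators of $J$, hence so do $\sigma(e_{i,j})=-e_{i,j}$ and $\sigma(f_{i,j})=-f_{i,j}$.

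For the inductive step I would apply $\sigma$ to the defining recursions \eqref{def of x0}--\eqref{def of x1}. Since $\sigma$ is an anti-homomorphism, $\sigma(e_{i,s}x^{\bar 0}_{s,j}(m-1))=\pm\,\sigma(x^{\bar 0}_{s,j}(m-1))\,\sigma(e_{i,s})$, and similarly for the $f$-term; so up to signs
\[
\sigma(x^{\bar 0}_{i,j}(m))=\sum_{s=1}^n \Big(\sigma(x^{\bar 0}_{s,j}(m-1))\,e_{i,s}\ \pm\ \sigma(x^{\bar 1}_{s,j}(m-1))\,f_{i,s}\Big),
\]
with the analogous expression for $\sigma(x^{\bar 1}_{i,j}(m))$. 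Now split the sum over $s$ into three ranges. If $s>j$ then $s>j>i$, so $e_{i,s},f_{i,s}$ are generators of $J$, and since $J$ is a left ideal the products $\sigma(\cdots)\,e_{i,s}$ and $\sigma(\cdots)\,f_{i,s}$ lie in $J$ — except that here the generator sits on the \emph{right}, so one must instead move it to the left by commutation, which is exactly where the work is. If $s<j$ then the inductive hypothesis (applied to the pair $s<j$) gives $\sigma(x^{\bar 0}_{s,j}(m-1)),\sigma(x^{\bar 1}_{s,j}(m-1))\in J$, and again $J$ being a \emph{left} ideal does the right thing. The delicate term is $s=j$, where neither mechanism applies directly.

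To handle $s=j$ and to deal with the wrong-sidedness of the generators in the $s>j$ terms, the key technical ingredient is the commutation relations \eqref{cmmurel}, which let me rewrite $\sigma(x^{\bar \epsilon}_{s,j}(m-1))\,e_{i,s}$ as $e_{i,s}\,\sigma(x^{\bar \epsilon}_{s,j}(m-1))$ plus a correction coming from $[e_{i,s},\cdot]$ (after transporting the bracket relations through $\sigma$, which turns the commutator into $\pm$ the same commutator). Applying $\sigma$ to the four identities in \eqref{cmmurel} yields relations of the shape $[\sigma(e_{i,j}),\sigma(x^{\bar 0}_{s,t}(m))]=\pm\delta_{j,s}\sigma(x^{\bar 0}_{i,t}(m))\mp\delta_{i,t}\sigma(x^{\bar 0}_{s,j}(m))$, i.e. the brackets of $\sigma(x^{\bar\epsilon})$ with the $\mfg$-generators are again $\kappa$-combinations of $\sigma(x^{\bar\epsilon'})$'s with \emph{one index shifted}. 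Feeding these into the $s=j$ term one expresses it, modulo $J$, in terms of $\sigma(x^{\bar\epsilon}(m-1))$ with smaller or equal index gap, and a secondary induction (on $j-i$, or on the index $i$) closes the argument. I expect this interplay — pushing the left-ideal generators past the $\sigma(x)$'s using the $\sigma$-twisted version of \eqref{cmmurel} and then invoking the induction hypothesis on the resulting lower terms, with careful sign bookkeeping from the super anti-involution — to be the main obstacle; everything else is bookkeeping. This is precisely the point where the argument parallels, but is not a direct corollary of, \cite[Lemma~8.4]{BK}, since there one works with $S_r$ and $x^{\bar\epsilon}(m)$ directly rather than with their images under $\sigma$.
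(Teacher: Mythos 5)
Your overall strategy --- induction on $m$, applying $\sigma$ to the recursions \eqref{def of x0}--\eqref{def of x1}, and splitting the resulting sum over $s$ --- is the same skeleton as the paper's proof, and the base case is fine. But there is a genuine error in how you handle the left ideal $J$, and it causes you to misidentify which terms are easy and which require work. Since $J$ is a \emph{left} ideal generated by the $e_{k,l},f_{k,l}$ with $k<l$, we have $J=\sum_{k<l}\U(\mfg)e_{k,l}+\sum_{k<l}\U(\mfg)f_{k,l}$: a product lies in $J$ automatically when a generator sits on the \emph{right}. Hence every term with $s>i$ (in particular $s=j$ and $s>j$) is already of the form $u\cdot e_{i,s}$ or $u\cdot f_{i,s}$ with $i<s$, so it lies in $J$ with no commutation whatsoever; your claim that one ``must instead move it to the left by commutation'' and that $s=j$ is ``delicate'' is backwards. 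Conversely, for the terms with $s\le i$ (which is the correct cutoff, not $s<j$), the inductive hypothesis gives $\sigma(x^{\bar0}_{s,j}(m-1)),\sigma(x^{\bar1}_{s,j}(m-1))\in J$, but these elements are multiplied by $e_{i,s}$ or $f_{i,s}$ on the \emph{right}, and a left ideal does not absorb right multiplication --- so these are precisely the terms where \eqref{cmmurel} is needed, contrary to your assertion that ``$J$ being a left ideal does the right thing.''

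Once the cases are sorted correctly the argument closes without any secondary induction on $j-i$: for $s\le i<j$ one writes $\sigma(x^{\bar 0}_{s,j}(m-1))e_{i,s}=e_{i,s}\sigma(x^{\bar 0}_{s,j}(m-1))+[\sigma(x^{\bar 0}_{s,j}(m-1)),e_{i,s}]$; the first summand is in $J$ because $\sigma(x^{\bar 0}_{s,j}(m-1))\in J$ by induction and $J$ is a left ideal, while the bracket equals $\pm\sigma([e_{i,s},x^{\bar 0}_{s,j}(m-1)])=\pm\sigma(x^{\bar 0}_{i,j}(m-1))$ by \eqref{cmmurel} (the $\delta_{i,t}$-term vanishes since $i<j$), which is again in $J$ by the inductive hypothesis on $m-1$ for the \emph{same} pair $i<j$; the $f_{i,s}$-terms are handled identically. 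This is exactly the paper's two-step reduction $\sum_{s=1}^{n}\equiv\sum_{s=1}^{i}\equiv 0\pmod J$. As written, your proposal would not assemble into a proof, because in each range of $s$ the mechanism you invoke is the one that fails there, and the key commutator computation is left as an expectation rather than carried out.
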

\begin{proof}Obviously, $
\sigma(x^{\bar0}_{i,j}(1))\equiv\sigma(x^{\bar1}_{i,j}(1))\equiv0 \pmod J$.  In general, by \eqref{def of x0}, \eqref{cmmurel} and  inductive assumption,    we have
$$
\begin{aligned}
\sigma(x^{\bar0}_{i,j}(m))=&\sum_{s=1}^n(-\sigma(x^{\bar0}_{s,j}(m-1))e_{i,s}+(-1)^{m-1}\sigma(x^{\bar1}_{s,j}(m-1))f_{i,s})\\
\equiv &  \sum_{s=1}^i(-\sigma(x^{\bar0}_{s,j}(m-1))e_{i,s}+(-1)^{m-1}\sigma(x^{\bar1}_{s,j}(m-1))f_{i,s})\pmod J\\
 \equiv & 0\pmod J.
\end{aligned}$$
Similarly,  one can  verify   $\sigma( x^{\bar1}_{i,j}(m))\equiv 0 \pmod J$.\end{proof}
\begin{Prop}\label{actionofsr}
Let $u$ be a highest weight vector of $M^{p}(\lambda)$. Then \begin{enumerate}\item[(1)]  $ \sigma(S_r) u= z_r(\lambda)u$, \item [(2)] $\Psi_{M^{\mathfrak p}(\l)}^f(\Delta_{2r-1})u=-2z_r(\lambda)u$ for any positive integer $r$.\end{enumerate}
\end{Prop}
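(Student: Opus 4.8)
The plan is to deduce (2) from (1) together with Proposition~\ref{actiononm}, and to prove (1) by a direct computation of the action of $\sigma(S_r)=\sum_{i=1}^n\sigma(x^{\bar0}_{i,i}(2r-1))$ on $u$, following the method of \cite[Lemma~8.4]{BK}. Since $u$ is a highest weight vector it is killed by the left superideal $J$ of Lemma~\ref{claim}, and that lemma gives $\sigma(x^{\bar0}_{i,j}(m))u=\sigma(x^{\bar1}_{i,j}(m))u=0$ for all $i<j$ and $m\ge1$; thus only the diagonal Sergeev elements contribute and $\sigma(S_r)u=\sum_{i=1}^n\sigma(x^{\bar0}_{i,i}(2r-1))u$. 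Moreover $S_r$ is central (\cite{Ser1}), hence so is $\sigma(S_r)$, and since $\ell(\lambda)$ is even the highest weight space of $M^{\mathfrak p}(\l)$ is an irreducible $\fh$-module with trivial endomorphism ring; therefore $\sigma(S_r)$ acts on $M^{\mathfrak p}(\l)$ by a scalar, and it suffices to compute the coefficient of $u$ in $\sum_i\sigma(x^{\bar0}_{i,i}(2r-1))u$.

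To do this I set up a recursion in $m$. Applying the anti-involution $\sigma$ to Sergeev's recursions \eqref{def of x0}--\eqref{def of x1} (exactly as in the proof of Lemma~\ref{claim}) expresses $\sigma(x^{\bar0}_{i,i}(m))$ and $\sigma(x^{\bar1}_{i,i}(m))$ as sums over $1\le s\le n$ of $\sigma(x^{\bar0}_{s,i}(m-1))$ or $\sigma(x^{\bar1}_{s,i}(m-1))$ multiplied on the right by $e_{i,s}$ or $f_{i,s}$. Evaluating on $u$: the $s>i$ terms vanish because $e_{i,s}u=f_{i,s}u=0$; for the $s<i$ terms one commutes the lowering operators $e_{i,s},f_{i,s}$ past $\sigma(x^{\bar0}_{s,i}(m-1))$, $\sigma(x^{\bar1}_{s,i}(m-1))$ using the $\sigma$-twisted forms of \eqref{cmmurel}, after which $\sigma(x^{\bar0}_{s,i}(m-1))u=\sigma(x^{\bar1}_{s,i}(m-1))u=0$ by Lemma~\ref{claim} and only the lower-order diagonal terms $\sigma(x^{\bar0}_{s,s}(m-1))u$, $\sigma(x^{\bar1}_{s,s}(m-1))u$, $\sigma(x^{\bar0}_{i,i}(m-1))u$, $\sigma(x^{\bar1}_{i,i}(m-1))u$ survive; and the $s=i$ term produces $e_{i,i}u=\lambda_iu$ and $f_{i,i}u=h_i'u$. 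The feature special to $\mathfrak q(n)$ is that $h_i'=f_{i,i}$ does not act by a scalar, but $f_{i,i}^2=e_{i,i}=h_i$ gives $(h_i')^2u=\lambda_iu$, and $f_{i,i}$ anticommutes with $f_{s,s}$ for $s\ne i$; these identities let one show by induction on $m$ that each $\sigma(x^{\bar0}_{i,i}(m))u$ and $\sigma(x^{\bar1}_{i,i}(m))u$ is an explicit linear combination of the Clifford-type vectors $f_{i_1,i_1}\cdots f_{i_k,i_k}u$ with $i_1<\dots<i_k\le i$ whose coefficients are polynomials in $\lambda_1,\dots,\lambda_i$ built from the elementary pieces $\lambda_i$ and $\lambda_i(\lambda_i+1)$. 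Summing over $i$ at $m=2r-1$ and reading off the coefficient of $u$ then yields precisely the sum of Definition~\ref{zrlam}; the overall sign and the difference from \cite[Lemma~8.4]{BK} (namely $\lambda_i\mapsto-\lambda_i$) are exactly the effect of the antipode $\sigma$. Finally (2) follows at once from (1) and Proposition~\ref{actiononm}: $\Psi_{M^{\mathfrak p}(\l)}^f(\Delta_{2r-1})u=-2\sigma(S_r)u=-2z_r(\lambda)u$.

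The main obstacle is the bookkeeping in the middle step: keeping the parity signs from the $(-1)^{m-1}$ factors in \eqref{def of x0}--\eqref{def of x1} consistent through the repeated applications of $\sigma$ and the reorderings, and verifying that the contributions from $s\ne i$ either vanish on $u$ or feed back cleanly into the lower-order diagonal terms, which requires a secondary induction invoking Lemma~\ref{claim} at each stage. The fact that the individual $\sigma(x^{\bar0}_{i,i}(m))u$ are not multiples of $u$ (only their full sum at $m=2r-1$ is) is absorbed by the centrality remark above, so one only ever needs to track the $u$-component; with that reduction in place, the purely combinatorial identification of the resulting scalar with the expression in Definition~\ref{zrlam} is routine and essentially identical to the argument of \cite[Lemma~8.4]{BK}.
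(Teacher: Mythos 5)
Your proposal is correct and follows essentially the same route as the paper: both apply $\sigma$ to Sergeev's recursions, use Lemma~\ref{claim} to discard the off-diagonal terms modulo the left superideal $J$ (which annihilates $u$), track the surviving diagonal contributions exactly as in \cite[Lemma~8.4]{BK}, and deduce (2) from (1) via Proposition~\ref{actiononm}. The only cosmetic difference is that the paper's two-step recursion \eqref{sigmax1} eliminates the odd Cartan elements $h_i'$ outright (using $(h_i')^2=h_i$), so that $\sigma(x^{\bar0}_{i,i}(2r-1))$ is, modulo $J$, already a polynomial in the commuting $h_j$'s and acts on $u$ by an explicit scalar; your appeal to the centrality of $S_r$ to isolate the $u$-component is therefore valid but not needed.
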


\begin{proof} Suppose $m$ is an odd positive integer. By Lemma~\ref{claim} and \eqref{cmmurel}, we have
\begin{equation}\label{SS1}
\begin{aligned} &
\sigma(x^{\bar0}_{i,i}(m))=\sum_{s=1}^n(-\sigma(x^{\bar0}_{s,i}(m-1))e_{i,s}+\sigma(x^{\bar1}_{s,i}(m-1))f_{i,s})\\
\equiv & -\sigma(x^{\bar0}_{i,i}(m-1))h_i+\sigma(x^{\bar1}_{i,i}(m-1))h'_i+ \sum_{s=1}^{i-1}(-\sigma(x^{\bar0}_{s,i}(m-1))e_{i,s}+\sigma(x^{\bar1}_{s,i}(m-1))f_{i,s})\pmod J\\
\equiv &-\sigma(x^{\bar0}_{i,i}(m-1))h_i+\sigma(x^{\bar1}_{i,i}(m-1))h'_i \pmod J.
\end{aligned}\end{equation}
Similarly, one can verify
\begin{equation}\label{SS2} \begin{aligned}
\sigma(x^{\bar0}_{i,i}(m-1))\equiv &-\sigma(x^{\bar0}_{i,i}(m-2))h_i-\sigma(x^{\bar1}_{i,i}(m-2))h'_i-2\sum_{s=1}^{i-1}\sigma (x^{\bar0}_{s,s}(m-2)) \pmod J\\
\sigma(x^{\bar1}_{i,i}(m-1))\equiv &-\sigma(x^{\bar1}_{i,i}(m-2))h_i+\sigma(x^{\bar0}_{i,i}(m-2))h'_i\pmod J.
 \end{aligned}\end{equation}
Combining \eqref{SS1}-\eqref{SS2} yields
\begin{equation}\label{sigmax1}
\sigma(x^{\bar0}_{i,i}(m))\equiv\sigma(x^{\bar0}_{i,i}(m-2))(h^2_i+h_i)+2\sum_{s=1}^{i-1}\sigma (x^{\bar0}_{s,s}(m-2))h_i\pmod J.
\end{equation}
By \eqref{sigmax1} and inductive assumption on $r$, we have
\begin{equation}\label{sigmax2}
\sigma(x^{\bar0}_{i,i}(2r-1))\equiv-\sum 2^{s-1}h_{i_1}h_{i_2}\cdots h_{i_s} y_{i_1}^{a_1}y_{i_2}^{a_2}\cdots
y_{i_s}^{a_s}\pmod J,
\end{equation}
where $y_i:=h_i^2+h_i$ and the summation  is over all $1\leq s\leq r$, $1\leq i_1<i_2<\ldots<i_s=i$, and $a_1,a_2,\ldots,
a_s\in \mathbb N$ such that  $a_1+a_2+\ldots+a_s=r-s$. Since $x u=0$ for all $x\in J$,  by \eqref{defofsr} and \eqref{sigmax2}, we have  $ \sigma(S_r) u= z_r(\lambda)u$, proving (1). Finally, (2) follows from (1) and Propositions~\ref{actiononm}.
\end{proof}

\section{Proof of Conjecture~\ref{Cyclotomic basis conjecture}}

The aim of this section is to give a proof of Conjecture~\ref{Cyclotomic basis conjecture} over an arbitrary commutative ring $\kappa$  containing $2^{-1}$. First, we assume $\kappa=\mathbb C$.
Recall that $\lambda$ is an $\mathfrak l$-dominant weight given in \eqref{de of lam}. Hereafter, we fix a   even  highest weight vector $v_\l$ of $L(\l)^0$.
We always assume  $n_i\geq 2r$, for $1\leq i\leq a+b+\epsilon$.

\begin{Defn}
For $1\leq i\leq a+b$, let
\begin{equation}\label{special monomials} B_{i}=\left\{
(f_{p_{i}, p_i-r})^{\beta_r} (f_{p_{i}-1, p_i-r-1})^{\beta_{r-1}}\ldots (f_{p_{i}-r+1, p_i-2r+1})^{\beta_{1}} \mid  \beta_j\in\{0,1\}, 1\leq j\leq r\right\}.
\end{equation}
\end{Defn}
\begin{Lemma}\label{basisoflevi}There is a subset $B$ of  the PBW monomial basis, which   contains all monomials
$\{b_{a+b}b_{a+b-1}\cdots b_{1}\mid    b_i\in B_i, 1\leq i\leq a+b\}$ such that $\{bv_\l~|~b\in B\}$ is a basis of $L(\l)^0$.
\end{Lemma}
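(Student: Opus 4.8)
The plan is to build the PBW monomial basis $B$ of $L(\l)^0$ by exhibiting, for each Levi block $\mathfrak q(n_i)$ with $1\le i\le a+b$, a set of PBW monomials in the root vectors of that block which, when applied to $v_\l$, spans the irreducible $\mathfrak q(n_i)$-module $L(\l^{(i)})$, and which contains the prescribed set $B_i$ from \eqref{special monomials}. Since $L(\l)^0\cong\bigotimes_{i=1}^{a+b+\epsilon}L(\l^{(i)})$ by \eqref{fds} and $v_\l$ is the tensor product of highest weight vectors, a basis of $L(\l)^0$ is obtained by taking products $b_{a+b+\epsilon}\cdots b_1$ where $b_i$ runs over a PBW basis of $L(\l^{(i)})$ (with the trivial factor when $\epsilon=1$, $i=a+b+\epsilon$). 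So the whole problem reduces to a single block: given the typical weight $\l^{(i)}$ of $\mathfrak q(n_i)$ with $n_i\ge 2r$, show that one can choose a set of PBW monomials spanning $L(\l^{(i)})$ that contains $B_i$.

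First I would recall that $\l^{(i)}$ (and $\l^{(i)}+\varepsilon_{p_{i-1}+1}$) is \emph{typical}, so by the typical character formula (\cite[Theorem~2]{Pe}, \cite[Theorem~4.8]{SZ}) the dimension of $L(\l^{(i)})$ is $2^{\lceil N/2\rceil}$ times the dimension of the corresponding $\mathfrak{gl}_{n_i}$-irreducible, where $N$ is the number of nonzero parts; equivalently $L(\l^{(i)})$ is a free module over the exterior algebra on the negative odd root vectors, with the even part behaving like the $\mathfrak{gl}_{n_i}$-module. Concretely, $L(\l^{(i)})$ has a PBW-type spanning set of the form (monomial in negative odd root vectors $f_{k,l}$, $k>l$, each appearing to power $0$ or $1$) $\times$ (monomial in negative even root vectors $e_{k,l}$, $k>l$) applied to $v_\l$, and by the typical character formula a suitable such set is in fact a basis. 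The key observation is that the specific monomials in $B_i$ — namely products of the "staircase" odd generators $f_{p_i-j+1,\,p_i-r-j+1}$ for $j=1,\dots,r$ with exponents in $\{0,1\}$ — are \emph{distinct} PBW monomials of this exact odd type (their index pairs $(p_i-j+1,\ p_i-2r+j)$... wait, read off from \eqref{special monomials}: the pairs are $(p_i,p_i-r),(p_i-1,p_i-r-1),\dots,(p_i-r+1,p_i-2r+1)$, all with first index $>$ second index since $r\ge 1$, and these $r$ index pairs are pairwise distinct). Hence $B_i$ is a subset of the standard odd-part PBW monomials; choosing the ambient PBW order so these come first, I enlarge $B_i$ to a full PBW basis of $L(\l^{(i)})$ simply by adjoining the remaining PBW monomials in a chosen order.

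The steps, in order: (1) reduce to a single Levi block via \eqref{fds}; (2) invoke typicality of $\l^{(i)}$ and $\l^{(i)}+\varepsilon_{p_{i-1}+1}$ to get that $L(\l^{(i)})$ is "free over the exterior algebra," giving the PBW spanning set and, via the character formula, that it is a basis; (3) check that the $n_i\ge 2r$ hypothesis ensures all the index pairs occurring in $B_i$ lie inside the block $\mathbf p_i=\{p_{i-1}+1,\dots,p_i\}$ and are valid positive roots, so $B_i$ consists of genuine distinct PBW monomials of the block; (4) fix a PBW order in which the $B_i$-monomials precede everything else and complete $B_i$ to a basis $B_i'$ of $L(\l^{(i)})$; (5) set $B=\{b_{a+b+\epsilon}\cdots b_1\mid b_i\in B_i'\}$ (with $b_{a+b+\epsilon}=1$ if $\epsilon=1$) and conclude $\{bv_\l\mid b\in B\}$ is a basis of $L(\l)^0$ by \eqref{fds}, noting that it contains the asserted products $b_{a+b}\cdots b_1$ with $b_i\in B_i$. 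The main obstacle is step (2)–(3): one must be careful that the elements of $B_i$ really are linearly independent images in $L(\l^{(i)})$ — i.e. that no relation in the irreducible quotient collapses distinct staircase odd monomials — which is exactly where typicality is essential (for an atypical weight such monomials could become dependent), and one must verify that the staircase pattern fits PBW ordering compatibly across all blocks simultaneously rather than merely block by block. This is a bookkeeping issue rather than a conceptual one, but it is where the care lies.
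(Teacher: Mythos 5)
Your overall route is the same as the paper's: factor $L(\l)^0\cong\bigotimes_{i}L(\l^{(i)})$ via \eqref{fds} to reduce to a single Levi block, then produce a PBW basis of each $L(\l^{(i)})$ containing $B_i$. The paper's entire proof is this reduction plus a citation of \cite[(4.9)]{GRSS} for the single-block statement, so the single-block case is where all the content lives — and that is precisely the step you flag at the end as ``the main obstacle'' without resolving it. Asserting that the typical character formula makes ``a suitable such set'' a basis does not yet show that the \emph{specific} staircase monomials in $B_i$ survive in the irreducible quotient: a character (or dimension) count tells you some set of PBW monomials of the right cardinality descends to a basis, not which one.

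The gap is fillable, and here is the mechanism your appeal to typicality is pointing at. The $r$ index pairs occurring in $B_i$ use each of the $2r$ indices $p_i-2r+1,\dots,p_i$ exactly once (this is where $n_i\ge 2r$ enters, keeping everything inside $\mathbf p_i$), so distinct elements of $B_i$ have pairwise distinct weights; hence linear independence of $\{bv_\l : b\in B_i\}$ reduces to showing each single vector $bv_\l$ is nonzero. For that one uses the anticommutator $f_{l,k}f_{k,l}+f_{k,l}f_{l,k}=h_k+h_l$: applying the raising operators $f_{p_i-r-j+1,\,p_i-j+1}$ in the reverse order to $bv_\l$ returns $\prod_j(\l_{p_i-j+1}+\l_{p_i-r-j+1})\,v_\l$ up to sign and lower-order terms, and typicality of $\l^{(i)}$ guarantees each factor $\l_k+\l_l$ is nonzero. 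Once each $B_i v_\l$ is known to be linearly independent and to consist of weight vectors, completion to a full PBW basis and the product over blocks go through exactly as you describe. So: same decomposition as the paper, but you should either carry out this Shapovalov-type computation or, as the authors do, cite \cite[(4.9)]{GRSS}, where it is done.
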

 \begin{proof} By \cite[(4.9)]{GRSS}, we have the  result for $L(\l^{(i)})$,   $1\leq i\leq a+b$. Thanks to \eqref{fds}, we have the result in general.
\end{proof}
Recall
 $\mathbf p_i$ in \eqref{pii} for all $1\le i\le a+b+\epsilon$.
  Let $ \mathfrak u^{\mathfrak l}$ (resp. $ \mathfrak u^{\mathfrak l,-}$) be the nilradical (resp., opposite nilradical) of $\mathfrak p$. Then $B^{\bar 0}_\mathfrak l$
(resp., $B^{\bar1}_\mathfrak l$) is a basis of even subspace $ \mathfrak u^{\mathfrak l,-}_{\bar 0}$ (resp., odd subspace     $ \mathfrak u^{\mathfrak l,-}_{\bar 1}$) of
  $ \mathfrak u^{\mathfrak l,-}$,  where
\begin{equation}\label{nilb}\begin{aligned}   B^{\bar 0}_\mathfrak l& =\left\{e_{i,j}\mid i>j, \{i,j\}\not\subset \mathbf p_k, 1\leq k\leq a+b+\epsilon\right\},\\
B^{\bar1}_\mathfrak l&=\left\{f_{i,j}\mid i>j, (i,j)\not\subset \mathbf p_k, 1\leq k\leq a+b+\epsilon\right\}.\\
\end{aligned}
\end{equation}
It is known that  the symmetric power  $S(\mathfrak u^{\mathfrak l,-} )\cong S(\mathfrak u^{\mathfrak l,-}_{\bar 0} )\otimes \bigwedge \mathfrak u^{\mathfrak l,-}_{\bar 1}$, where $\bigwedge \mathfrak u^{\mathfrak l,-}_{\bar 1}$ is the usual
exterior power. Moreover, $S(\mathfrak u^{\mathfrak l,-} )$ has  basis
\begin{equation}
B_\mathfrak u= \left\{ \prod_k(e_{i_k,j_k})^{\delta_k}\prod_m(f_{i_m,j_m})^{\sigma_m}\mid \delta_k\in\mathbb Z_{\geq 0}, \sigma_m \in\{0,1\}\right \},
\end{equation}
where the first product (resp.,  the second product) is taken over any fixed order (for example, the lexicographic order) on  $B^{\bar 0}_\mathfrak l$
(resp. $B^{\bar1}_\mathfrak l$).

\begin{Cor}\label{basisofm}Let  $M$ be the parabolic Verma supermodule $M^{\mathfrak p} (\l)$, where $\lambda$ is given in \eqref{de of lam}. Then \begin{enumerate}\item[(1)]
$M$ has basis $\{z v_\l \mid z\in B_M\}$ where $B_M=\{yb\mid y\in  B_\mathfrak u, b\in B \}$, where $B$ is given in Lemma~\ref{basisoflevi}.
 \item[(2)]   $V^{\otimes r}\otimes M$ has basis
$B_{M, r}=\{ v_\mathbf i\otimes uv_\l\mid \mathbf i\in I^r, u\in B_M\}
$.\end{enumerate}
\end{Cor}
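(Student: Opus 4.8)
The statement of Corollary~\ref{basisofm} is a direct consequence of the structure of parabolic Verma supermodules together with the PBW theorem, so the plan is essentially to assemble already-established facts.

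For part (1), recall that $M^{\mathfrak p}(\l)=\U(\mfg)\otimes_{\U(\mathfrak p)}L(\l)^0$. By the PBW theorem applied to the triangular-type decomposition $\mfg=\mathfrak u^{\mathfrak l,-}\oplus\mathfrak p$, the multiplication map $\U(\mathfrak u^{\mathfrak l,-})\otimes\U(\mathfrak p)\to\U(\mfg)$ is an isomorphism of right $\U(\mathfrak p)$-supermodules (up to the usual sign twist in the super setting), so $M^{\mathfrak p}(\l)\cong \U(\mathfrak u^{\mathfrak l,-})\otimes_{\C} L(\l)^0$ as $\C$-superspaces. Since $\mathfrak u^{\mathfrak l,-}_{\bar 0}$ is abelian, $\U(\mathfrak u^{\mathfrak l,-})\cong S(\mathfrak u^{\mathfrak l,-})$, which has basis $B_{\mathfrak u}$ as recorded just before the statement. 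Combining this $\C$-basis of $\U(\mathfrak u^{\mathfrak l,-})$ with the basis $\{bv_\l\mid b\in B\}$ of $L(\l)^0$ from Lemma~\ref{basisoflevi} shows that $\{ybv_\l\mid y\in B_{\mathfrak u},\ b\in B\}$ is a basis of $M^{\mathfrak p}(\l)$; setting $z=yb$ with $z$ ranging over $B_M$ gives exactly the claimed basis $\{zv_\l\mid z\in B_M\}$. One small point to be careful about is that the $yb$ are being regarded as elements of $\U(\mfg)$ and that $\{ybv_\l\}$ really is linearly independent in $M^{\mathfrak p}(\l)$ rather than just spanning; this follows because the isomorphism $M^{\mathfrak p}(\l)\cong \U(\mathfrak u^{\mathfrak l,-})\otimes_\C L(\l)^0$ sends $ybv_\l$ to $y\otimes bv_\l$, and tensor products of bases are bases.

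For part (2), we simply tensor on the left with $V^{\otimes r}$. The space $V$ has the basis $\{v_i\mid i\in I\}$ with $I=I^+\cup I^-$, so $V^{\otimes r}$ has the basis $\{v_{\mathbf i}\mid \mathbf i\in I^r\}$, where $v_{\mathbf i}=v_{i_1}\otimes\cdots\otimes v_{i_r}$ for $\mathbf i=(i_1,\ldots,i_r)$. A tensor product over $\C$ of modules with given bases has as a basis the set of pure tensors of basis vectors, so $V^{\otimes r}\otimes M$ has basis $\{v_{\mathbf i}\otimes z v_\l\mid \mathbf i\in I^r,\ z\in B_M\}$, which is precisely $B_{M,r}$ after relabelling $z$ as $u$. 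There is genuinely no obstacle here beyond bookkeeping; the only thing worth stating explicitly is that all tensor products are taken over $\C$, so the parity twist in the $\mfg$-action does not affect the underlying $\C$-vector-space basis, and the point of recording $B_{M,r}$ in this explicit form is that in the next stage of the argument one needs a concrete basis on which the functor $\Psi_{M^{\mathfrak p}(\l)}^f$ and the generating morphisms of $\OBC^f_\C$ can be computed. Accordingly I would keep the proof to one or two lines, citing Lemma~\ref{basisoflevi} and the PBW theorem.
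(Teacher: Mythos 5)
Your argument is correct and is exactly the (routine) argument the paper has in mind: the paper's own proof of this corollary is the single line ``Thanks to Lemma~\ref{basisoflevi}, we immediately have (1)--(2)'', and what you write is just the standard PBW/tensor-product bookkeeping that this one-liner suppresses. One small caveat: your justification ``since $\mathfrak u^{\mathfrak l,-}_{\bar 0}$ is abelian, $\U(\mathfrak u^{\mathfrak l,-})\cong S(\mathfrak u^{\mathfrak l,-})$'' is not needed and is false in general (with three or more blocks the opposite nilradical is nilpotent but not abelian); the identification of $B_{\mathfrak u}$ with a basis of $\U(\mathfrak u^{\mathfrak l,-})$ is simply the PBW theorem applied to the ordered monomials, no abelianness required, so the conclusion stands.
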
 \begin{proof} Thanks to Lemma~\ref{basisoflevi}, we immediately have (1)-(2).  \end{proof}

Let  $\U(\mfg)^{-}$ be  the negative part of $\U(\mfg)$. For any $f_{i,j}$, $e_{i,j}$ $\in \U(\mfg)^{-}$, we define $\text{deg}(f_{i,j})$ $=$ $\text{deg}(e_{i,j})$ $=1$. This gives a $\mathbb Z$-grading on $\U(\mfg)^{-}$.
If  $x\in \U(\mfg)^{-}$ is a PBW monomial,  then $\text{deg}(x)$  is equal to the numbers of $f_{i,j}$'s and $e_{i,j}$'s appearing in the product of $x$. For any basis element $yb v_\l\in B_{M, r}$, we    say $yb v_\l$ is of degree $\text{deg}(yb)$.

\begin{Defn}\label{vbeta} Suppose $\beta=(\beta_r, \beta_{r-1}, \ldots, \beta_1)\in \underline \ell^r$, where  $\underline\ell=\{0,1,\ldots, \ell-1\}$.
 Define $v^\beta=v_{i_{r, \beta_r}}\otimes v_{i_{r-1,\beta_{r-1}}}\otimes \ldots \otimes v_{i_{1,\beta_{1}}}$ and $y^\beta=y_{r,\beta_r}y_{r-1,\beta_{r-1}}\cdots y_{1,
\beta_1}$, where \begin{enumerate}\item $y_{k,\beta_k}$ is the ordered product $(\prod_{j=1}^{\beta_k-1} f_{i_{k,j}, i_{k,j+1}}) f_{ i_{k,0}, i_{k,1}}$, $1\le k\le r$, \item   $i_{k,0}=n-k+1$  and
$
i_{k, j}=  -( i_{k, j-1}-\gamma_j)$, if $1\le j\le \beta_k$, and $1\le k\le r$, \item   $\gamma_j=\frac{1-(-1)^{j+\epsilon}}{2}r+\frac{1+(-1)^{j+\epsilon}}{2}n_{a+b+ \epsilon-\lfloor \frac{j+\epsilon-1}{2}\rfloor}$,  if   $1\leq j\leq \beta_k$ and  $1\leq k\leq r$.\end{enumerate}\end{Defn}

Write $w^\beta= v^\beta\otimes x^{\beta}v_\lambda$,
such that $x^\beta\in B_M$ can be  obtained from $y^\beta$ by changing the order of its factors.
We  say $w^\beta$ is of degree $\text{deg}(x^\beta)$. Obviously, $\text{deg}(x^\beta)=\text{deg}(y^\beta)$.
We define a total order on $\underline \ell^r$ such that for any $\beta, \beta'\in \underline \ell^r$,  $\beta'<\beta$ if  either $\sum_{i}\beta_i'<\sum_{i}\beta_i$, or $\sum_{i}\beta_i'=\sum_{i}\beta_i$ and the leftmost nonzero entry of $\beta-\beta'$ is positive.

Following \cite{CK}, we count a strand of a dotted oriented Brauer-Clifford diagram  from  right to left. Let $x_k$ be obtained from the identity diagram by placing a single $\fulldot$ on the $k$th strand.   For example,
\begin{equation}\label{xlabel}
    x_2=~
    \begin{tikzpicture}[baseline = 7.5pt, scale=0.75, color=\clr]
        \draw[->,thick] (1,0) to (1,1);
        \draw[->,thick] (1.5,0) to (1.5,1);
        \draw[->,thick] (2,0) to (2,1);
        \draw[->,thick] (2.5,0) to (2.5,1);
        \draw[->,thick] (3,0) to (3,1);
        \draw[->,thick] (3.5,0) to (3.5,1);
        \draw (3,0.4) \bdot;
    \end{tikzpicture}
\end{equation} if $r=6$.
Let $\mathbf 0=(0,0,\ldots,0)\in \underline \ell^r$.
We define  $c_{\beta,\beta'}$ to be the  coefficient of $w^{\beta}$ when $\Psi_M^f(x_r^{\beta_r'}\cdots x_1^{\beta_1'})(w^{\mathbf{0}})$ is written in terms of the basis for $V^{\otimes r}\otimes M$ in Corollary~\ref{basisofm}.

Recall that  $\Omega_1$ is in \eqref{def-Omega}. Let  $i_{k,j}$ and $ i_{k,j+1}$ be  given in Definition~\ref{vbeta}.

\begin{Lemma}\label{adlee}  Expressing $\Omega_1(v_{i_{k,j}}\otimes v_\l)$ as
  a linear combination of basis elements  in $B_{M,1}$ (see Corollary~\ref{basisofm}), we see that  the highest degree of its terms is  $1$.
Moreover,
\begin{enumerate}
 \item[(1)]   $ v_{i_{k,j+1}}\otimes f_{i_{k,j},i_{k+1,j}}v_\l$ is a term of  $\Omega_1(v_{l}\otimes v_\l)$ if and only if  $l=i_{k,j}$.
\item[(2)] When $l=i_{k,j}$,  the coefficient  of $ v_{i_{k,j+1}}\otimes f_{i_{k,j},i_{k+1,j}}v_\l$ is $\pm1$.

\end{enumerate}
\end{Lemma}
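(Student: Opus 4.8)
The plan is to compute $\Omega_1(v_{i_{k,j}}\otimes v_\l)$ directly from the definition \eqref{def-Omega}, keeping track of what hits the highest weight vector $v_\l$ of $L(\l)^0$. Recall $\Omega_1=\sum_{s,t\in I^+}\tilde e_{s,t}\OTIMES e_{t,s}-\sum_{s,t\in I^+}\tilde f_{s,t}\OTIMES f_{t,s}$, where $\tilde e_{s,t},\tilde f_{s,t}\in\mathfrak{gl}_{n|n}$ act on $V$ and $e_{t,s},f_{t,s}\in\mfg$ act on $M^{\mathfrak p}(\l)$ via the first tensor factor only. The first step is to record, for the fixed basis vector $v_{i_{k,j}}$ of $V$, which operators $\tilde e_{s,t}$ or $\tilde f_{s,t}$ send $v_{i_{k,j}}$ to a nonzero basis vector $v_m$ of $V$; by the matrix-unit formulas this pins down $s,t$ in terms of $i_{k,j}$ and the target index. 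Then for each such pair $(s,t)$ the corresponding Lie-superalgebra element $e_{t,s}$ or $f_{t,s}$ is applied to $v_\l$.

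Next I would split $e_{t,s}v_\l$ (resp. $f_{t,s}v_\l$) according to whether $(t,s)$ sits in the ``positive'' part (annihilating $v_\l$ since $v_\l$ is a highest weight vector for $\mathfrak p$, hence killed by $\mathfrak u^{\mathfrak l}$ and the positive part of $\mathfrak l$), the Cartan part (scaling $v_\l$, which contributes degree-$0$ terms), or the ``negative'' part (producing a degree-$1$ element acting on $v_\l$). This immediately gives the degree bound: every nonzero contribution to $\Omega_1(v_{i_{k,j}}\otimes v_\l)$ is either $v_m\otimes(\text{scalar})v_\l$ of degree $0$ or $v_m\otimes(\text{negative root vector})v_\l$ of degree $1$, so the highest degree appearing is $1$ — this is the first assertion. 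For part~(1), I would isolate the degree-$1$ terms and determine exactly when the $V$-component is $v_{i_{k,j+1}}$ and the $\mfg$-component is $f_{i_{k,j},i_{k+1,j}}$; using the explicit recursion $i_{k,j+1}=-(i_{k,j}-\gamma_{j+1})$ from Definition~\ref{vbeta} together with the analogous index relation for $i_{k+1,j}$, I expect that the only way to produce this precise monomial is to take $\tilde f_{s,t}$ with $v_{i_{k,j}}\mapsto\pm v_{i_{k,j+1}}$ and then $f_{t,s}=f_{i_{k,j},i_{k+1,j}}$ acting on $v_\l$, which forces $l=i_{k,j}$; conversely when $l=i_{k,j}$ exactly this term is present. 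Part~(2) then falls out: the coefficient is the product of the sign from $\tilde f_{s,t}(v_{i_{k,j}})=\pm v_{i_{k,j+1}}$ with a sign of $\pm 1$ coming from the super-sign conventions in \eqref{gact} and \eqref{action-dual}, so it equals $\pm 1$.

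The main obstacle will be the bookkeeping of indices and parities: the indices $i_{k,j}$ alternate sign and jump by the quantities $\gamma_j$ tied to the $\epsilon$-parity and the block sizes $n_{a+b+\epsilon-\lfloor(j+\epsilon-1)/2\rfloor}$, so verifying that the $V$-map $\tilde f_{s,t}$ which outputs $v_{i_{k,j+1}}$ has exactly the same $(t,s)$ as the $f_{t,s}=f_{i_{k,j},i_{k+1,j}}$ one needs on the module side requires carefully matching $i_{k+1,j}$ against $i_{k,j}$ shifted by the block size. I would handle this by treating the two parities of $j$ separately (one case shifts within a $\mathfrak q(n_\bullet)$ block, the other swaps to the ``$-$'' copy), checking the two index identities in each case. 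A secondary subtlety is making sure no \emph{other} pair $(s,t)$ accidentally yields the same basis monomial $v_{i_{k,j+1}}\otimes f_{i_{k,j},i_{k+1,j}}v_\l$; here I would invoke that $\{bv_\l\mid b\in B_M\}$ is a basis (Corollary~\ref{basisofm}) so distinct PBW monomials in the $f$'s give linearly independent vectors, reducing the question to the injectivity of the index assignment, which the explicit formulas in Definition~\ref{vbeta} guarantee.
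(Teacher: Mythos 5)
Your proposal is exactly the ``straightforward computation'' that the paper itself gives as the entire proof: expand $\Omega_1$ from \eqref{def-Omega}, note that the second tensor factor either annihilates $v_\l$ (positive part), preserves degree $0$ (Cartan part), or contributes a single negative root vector (degree $1$), and then match indices via Definition~\ref{vbeta} to isolate the term $v_{i_{k,j+1}}\otimes f_{i_{k,j},i_{k+1,j}}v_\l$ and its sign. The approach and the anticipated bookkeeping issues coincide with the paper's intent, so the proposal is correct and essentially identical in method.
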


\begin{proof}Straightforward computation. See the definition of $\Omega_1$  in \eqref{def-Omega}.
\end{proof}

\begin{Lemma}\label{L:coefficients technical lemma}
 Suppose $\beta,\beta'\in\underline \ell^r$. We have  $c_{\beta,\beta}=\pm 1$ and $c_{\beta,\beta'}=0$ if
  $\beta'<\beta$.
\end{Lemma}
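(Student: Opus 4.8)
The plan is to analyze how the operators $x_k$ act on the highest-degree part of $w^{\mathbf 0}$ and to show that applying $x_r^{\beta_r'}\cdots x_1^{\beta_1'}$ to $w^{\mathbf 0}$ produces, modulo lower-degree terms and modulo basis vectors $w^{\gamma}$ with $\gamma<\beta'$, exactly $\pm w^{\beta'}$. Combined with the upper-triangularity this will give both $c_{\beta,\beta}=\pm 1$ and the vanishing $c_{\beta,\beta'}=0$ for $\beta'<\beta$. The key computational input is Lemma~\ref{adlee}: under $\Psi_M^f$, the operator $x_k$ acts on the $k$-th tensor factor by $\Omega_1$ acting between that factor and everything to its right (the coproduct formula), and by Lemma~\ref{adlee} the leading-degree effect of a single $\Omega_1$ on a vector $v_{i_{k,j}}\otimes(\text{stuff})v_\l$ is to replace $v_{i_{k,j}}$ by $v_{i_{k,j+1}}$ and multiply the module part by $f_{i_{k,j},i_{k+1,j}}$, with coefficient $\pm 1$, all other terms being of strictly lower degree (or killed by $v_\l$).

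First I would set up the degree bookkeeping: using the $\mathbb Z$-grading on $\U(\mfg)^-$ from the paragraph after Corollary~\ref{basisofm}, observe that each application of $x_k$ can raise the degree of the module component by at most $1$ (this is precisely the ``highest degree of its terms is $1$'' clause of Lemma~\ref{adlee}, applied through the coproduct; the cross terms hitting $v_\l$ vanish because $v_\l$ is a highest weight vector and the relevant raising operators kill it). Hence $\Psi_M^f(x_r^{\beta_r'}\cdots x_1^{\beta_1'})(w^{\mathbf 0})$ has module-degree at most $\sum_i \beta_i'$, and the degree-$(\sum_i\beta_i')$ part is obtained by always taking the leading term in each of the $\sum_i\beta_i'$ applications. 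Tracking which vector $v_{i_{k,j}}$ sits in slot $k$ after $j$ applications of $x_k$ — this is exactly the recursion $i_{k,j}=-(i_{k,j-1}-\gamma_j)$ of Definition~\ref{vbeta} — one sees that applying $x_k$ exactly $\beta_k'$ times (for each $k$, in the prescribed right-to-left order) turns the $k$-th slot $v_{i_{k,0}}$ into $v_{i_{k,\beta_k'}}$ and contributes the ordered product $y_{k,\beta_k'}$ to the module part. Assembling over all $k$ gives precisely $w^{\beta'}=v^{\beta'}\otimes x^{\beta'}v_\l$ up to reordering of the PBW factors (which is harmless since $x^\beta$ is defined exactly as such a reordering of $y^\beta$) and up to sign $\pm 1$ coming from the product of the $\pm 1$'s in Lemma~\ref{adlee}(2).

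For the off-diagonal vanishing, I would argue: if $\beta'<\beta$ and $\sum_i\beta_i'<\sum_i\beta_i$, then the module-degree of every term of $\Psi_M^f(x_r^{\beta_r'}\cdots x_1^{\beta_1'})(w^{\mathbf 0})$ is $\le\sum_i\beta_i'<\sum_i\beta_i=\deg(w^\beta)$, so $w^\beta$ cannot appear, giving $c_{\beta,\beta'}=0$. If instead $\sum_i\beta_i'=\sum_i\beta_i$ but $\beta\neq\beta'$, the leading-degree part computed above is a single basis vector $w^{\beta'}$ with $\beta'\neq\beta$, so again the coefficient of $w^\beta$ among the top-degree terms is zero; and lower-degree terms cannot equal $w^\beta$ by the degree count. (One should double-check that the leading-degree contribution is a genuine basis vector and not a combination — this follows because at each step Lemma~\ref{adlee}(1) identifies the leading term uniquely, so there is no branching in top degree.) The main obstacle I anticipate is the careful verification that the various sub-leading terms produced along the way — in particular terms where some $\Omega_1$ acts ``internally'' within the module factor on a previously created $f$-generator rather than on $v_\l$ — are genuinely of strictly lower module-degree or else expand in the basis $B_{M,r}$ as vectors $w^\gamma$ with $\gamma<\beta$; handling this cleanly requires the ordering on $\underline\ell^r$ (first by $\sum_i$, then lexicographically on the leftmost nonzero entry of $\beta-\beta'$) to be matched precisely against the right-to-left order in which the $x_k$'s are applied, so that any deviation from the ``always take the leading term'' path lands strictly below $\beta'$ in this order.
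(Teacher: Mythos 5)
Your treatment of the diagonal entries and of the case $\sum_i\beta_i'<\sum_i\beta_i$ matches the paper, but the off-diagonal vanishing for equal sums rests on a claim that is false: namely that the top-degree part of $\Psi_M^f(x_r^{\beta_r'}\cdots x_1^{\beta_1'})(w^{\mathbf 0})$ is the \emph{single} basis vector $\pm w^{\beta'}$ because ``Lemma~\ref{adlee}(1) identifies the leading term uniquely, so there is no branching in top degree.'' Lemma~\ref{adlee}(1) is an ``if and only if'' statement about \emph{one particular} degree-one term; it does not assert uniqueness of the degree-one part. Computing directly from \eqref{def-Omega}, for $l\in I^+$ one finds
\[
\Omega_1(v_l\otimes v_\lambda)=\lambda_l\,v_l\otimes v_\lambda \;-\; v_{-l}\otimes h_l'v_\lambda\;+\;\sum_{i<l}\bigl(v_i\otimes e_{l,i}v_\lambda-v_{-i}\otimes f_{l,i}v_\lambda\bigr),
\]
so a single application of $\Omega_1$ already produces on the order of $2(l-1)$ linearly independent degree-one terms, and after $\sum_i\beta_i'$ applications the top-degree part is a large linear combination of basis vectors, of which $w^{\beta'}$ is only one (with coefficient $\pm1$). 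Your own parenthetical ``one should double-check that the leading-degree contribution is a genuine basis vector and not a combination'' is exactly where the argument breaks. Note also that your argument, if it were valid, would yield $c_{\beta,\beta'}=0$ for \emph{every} $\beta\neq\beta'$ with $\sum_i\beta_i=\sum_i\beta_i'$, i.e.\ diagonality; the lemma is deliberately stated, and proved, only as triangularity with respect to a carefully chosen total order, which is a strong indication that diagonality fails.

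What is actually needed for the equal-sum case, and what the paper supplies, is a case analysis on $k$, the maximal index with $\beta_k>0$ (for which necessarily $\beta'_j=0$ for $j>k$ when $\beta'<\beta$). If $\beta_k'=0$, then since the top-degree action of each $x_j$ only involves slot $j$ and the module factor $M$, slot $k$ of every top-degree term remains $v_{n-k+1}$, whereas $w^\beta$ (which has degree equal to the maximal possible degree and so could only arise from top-degree terms) has $v_{i_{k,\beta_k}}\neq v_{i_{k,0}}$ in slot $k$; hence $c_{\beta,\beta'}=0$. If $\beta_k'>0$, one strips off one application of $x_k$ and uses the ``only if'' direction of Lemma~\ref{adlee}(1) to show $c_{\beta,\beta'}\neq0$ would force $c_{\bar\beta,\bar\beta'}\neq0$ with $\bar\beta=\beta-(0^{r-k},1,0^{k-1})$, $\bar\beta'=\beta'-(0^{r-k},1,0^{k-1})$, contradicting a downward induction on $\sum_i\beta_i'$. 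Your proposal contains neither step and cannot conclude without them, or without an honest proof that none of the extra degree-one branches of $\Omega_1$ ever assembles into a basis vector of the form $w^{\gamma}$ with $\gamma>\beta'$ --- which is a substantive claim, not a consequence of Lemma~\ref{adlee}.
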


\begin{proof}
By  Theorem~\ref{actofca}, $x_k$ acts on $V^{\otimes r}\otimes M$ via  $\Psi_M^f(x_k)=1^{\otimes r-k}\otimes \Omega_1|_{V, V^{\otimes k-1}\otimes M}$. In other words, when we compute  $\Psi_M^f(x_k)$, we consider  $V$ (resp., $V^{\otimes k-1}\otimes M$)  as the first (resp., second) tensor factor.
 Since we are going to consider the  terms
of $\Psi_M^f(x_r^{\beta_r'}\cdots x_1^{\beta_1'})(w^{\mathbf{0}})$ with the highest degree, $x_k$ can be replaced by   $\pi_{k,0}(\Omega_1)$,
where $\pi_{k,0}:\U(\mfg)^{\otimes 2}\rightarrow \U(\mfg)^{\otimes(k+1)}$ is the linear map such that $\pi_{k,0}(g_1\otimes g_2)=g_1\otimes 1^{\otimes k-1}\otimes g_2$, for all $g_1,g_2\in \U(\mfg)$.
By  Lemma~\ref{adlee}(2) and  induction on $|\beta|:=\sum_i\beta_i$, we have $c_{\beta,\beta}=\pm 1$.

 Since the degree of  any   term  in the expression of  $\Psi_M^f(x_r^{\beta_r'}\cdots x_1^{\beta_1'})(w^{\mathbf{0}})$
is less than  $\sum_{i}\beta_i'$, we have   $c_{\beta,\beta'}=0$ if  $\sum_{i}\beta_i'<\sum_{i}\beta_i$.  Now, we assume  $\sum_{i}\beta_i'=\sum_{i}\beta_i$ and $\beta'<\beta$. We prove $c_{\beta,\beta'}=0$  by  induction on $\sum_{i}\beta_i'$.
Obviously, $c_{\beta,\beta'}=0$ if  $\sum_{i}\beta_i'=0$. Otherwise,
let $k$ be the maximal integer  such that $ \beta_k>0$.

 \textbf{Case 1:} $\beta_k'=0$. Suppose that $v_\mathbf i\otimes uv_\l \in B_{M,r}$, which  appears as a term of
 $\Psi_M^f(x_r^{\beta_r'}\cdots x_1^{\beta_1'})(w^{\mathbf{0}})$ with the highest degree. Since we are assuming that $\beta_k'=0$,   $v_{i_k}$ (i.e. $k$th component of $v_\mathbf i$) must be  $v_{n-k+1}$. Further, $v_{i_k}$ is the $k$th component of $v^{\mathbf 0}$ (see the definition of  $v^{\mathbf 0}$ in Definition~\ref{vbeta}). By the definition of
  $i_{k, j}$ in Definition~\ref{vbeta}, we have   $c_{\beta,\beta'}=0$.

 \textbf{Case 2:} $\beta_k'>0$. If   $c_{\beta,\beta'}\neq0$, we define   $\bar \beta=\beta-(0^{r-k},1,0^{k-1}) $ and $\bar\beta'=\beta'-(0^{r-k},1,0^{k-1})$.
By  Lemma~\ref{adlee}(1),
 $ w^{\bar \beta}$ is  a term in the expression of   $\Psi_M^f(x_r^{\beta_r'}\cdots x_{k+1}^{\beta'_{k+1}}x_k^{\beta'_k-1} x_{k-1}^{\beta'_{k-1}}\cdots  x_1^{\beta_1'})(w^{\mathbf{0}})$ with non-zero coefficient. So,
 $c_{\bar\beta,\bar\beta'}\neq0$.
 This contradicts  our inductive assumption since  $\sum_{i}\bar\beta_i'=\sum_{i}\bar\beta_i<\sum_{i}\beta_i'$ and $\bar\beta'<\bar\beta$ and $|\bar \beta|=|\beta|-1$.
Thus, $c_{\beta,\beta'}=0$ as required.
\end{proof}

For any  dotted oriented Brauer-Clifford diagram $d$ of type  $\up^r\to\up^r$,    Comes and Kujawa  defined the oriented Brauer-Clifford diagram $\undot{d}$, which is obtained from $d$ by removing all $\fulldot$'s. For example, if
\begin{equation}\label{undot example}
    d=~
    \begin{tikzpicture}[baseline = 10pt, scale=0.5, color=\clr]
        \draw[->,thick] (0,0) to[out=up, in=down] (2,2);
        \draw[->,thick] (1,0) to[out=up, in=down] (4,2);
        \draw[->,thick] (2,0) to[out=up, in=down] (0,2);
        \draw[->,thick] (3,0) to[out=up, in=down] (1,2);
        \draw[->,thick] (4,0) to[out=up, in=down] (5,2);
        \draw[->,thick] (5,0) to[out=up, in=down] (3,2);
        \draw (0.15,1.5) \wdot;
        \draw (4.9,1.5) \wdot;
        \draw (2.85,0.45) \bdot;
        \draw (2.85,1.05) \bdot;
        \draw (3.85,1.05) \bdot;
    \end{tikzpicture},
    \quad\text{then}~
    \undot{d}=~
    \begin{tikzpicture}[baseline = 10pt, scale=0.5, color=\clr]
        \draw[->,thick] (0,0) to[out=up, in=down] (2,2);
        \draw[->,thick] (1,0) to[out=up, in=down] (4,2);
        \draw[->,thick] (2,0) to[out=up, in=down] (0,2);
        \draw[->,thick] (3,0) to[out=up, in=down] (1,2);
        \draw[->,thick] (4,0) to[out=up, in=down] (5,2);
        \draw[->,thick] (5,0) to[out=up, in=down] (3,2);
        \draw (0.15,1.5) \wdot;
\draw (4.9,1.5) \wdot;
    \end{tikzpicture}
    ~.
\end{equation}
 Suppose $d$ is a normally ordered dotted oriented Brauer-Clifford diagram  without bubbles of type  $\up^r\to\up^r$. Let  $\beta_k(d)$  be the number of $\fulldot$'s on the $k$th strand of $d$. Then

\begin{equation}\label{d}
    d=\undot{d}\circ x_r^{\beta_r(d)}\circ\cdots\circ x_1^{\beta_1(d)}.
\end{equation}
(see \cite[\S5]{CK}).

\begin{Prop}\label{basis theorem for level l Hecke-Clifford} Suppose  $r$ is a positive  integer. Let $\OBC_{\mathbb C}^{ f}(\delta)$ be the cyclotomic oriented Brauer-Clifford supercategory over $\mathbb C$ with respect to  $ f(t)=t^{2a+\epsilon}\prod_{i=1}^b(t^2- u_i)\in\mathbb C[t]$ such that $\delta_{2k}=0$ and $\delta_{2k-1}=-2z_k(\lambda)$, where $z_k(\l)$ is given in Definition~\ref{zrlam} for any positive integer $k$.
Let   $S$   be the set of  all elements $
\undot{d}\circ x_r^{\beta_r(d)}\circ\cdots\circ x_1^{\beta_1(d)}$,
where \begin{enumerate} \item $d$ ranges over all equivalence classes of  normally ordered dotted oriented Brauer-Clifford diagrams without bubbles of type  $\up^r\to\up^r$,\item
 $(\beta_r(d), \beta_{r-1}(d), \ldots, \beta_1(d))\in \underline \ell^r$, where $\underline \ell=\{0, 1, \ldots, \ell-1\}$.\end{enumerate}
\noindent For a finite subset $A\subset  S$, we have  $\sum_{d\in A} p_d d=0$  if and only if    $p_d=0$ for all $d\in A$.
 \end{Prop}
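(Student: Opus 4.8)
The implication ``$\Leftarrow$'' is trivial, so I treat ``$\Rightarrow$''. The plan is to test a dependence $\sum_{d\in A}p_d d=0$ against the concrete module $V^{\otimes r}\otimes M$ with $M=M^{\mathfrak p}(\l)$, via the superfunctor $\Psi_M^f$ of Theorem~\ref{zerop}(c). First I would check that $\Psi_M^f\colon\OBC_{\mathbb C}^f\to\mathcal O^{\mathfrak p}$ descends to $\OBC_{\mathbb C}^f(\delta)$: since $\Delta_{2k}=0$ already in $\AOBC_{\mathbb C}$ we get $\delta_{2k}=0=\Psi_M^f(\Delta_{2k})$, while $\Psi_M^f(\Delta_{2k-1})$ acts on $M$ by the scalar $-2z_k(\l)=\delta_{2k-1}$ by Proposition~\ref{actionofsr}(2); and since a bubble is central, $\Psi_M^f(\Delta_k\otimes\mathbf 1_{\up^r})$ acts on $V^{\otimes r}\otimes M$ as $\id_{V^{\otimes r}}\otimes\Psi_M^f(\Delta_k)=\delta_k\,\id$, so $\Psi_M^f$ indeed factors through $\OBC_{\mathbb C}^f(\delta)$. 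Hence it suffices to prove that $\{\Psi_M^f(d):d\in A\}$ are linearly independent operators on $V^{\otimes r}\otimes M$.

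The main device is a degree filtration. Put $F_n=V^{\otimes r}\otimes M_{\le n}$, where $M_{\le n}$ is the span of those basis vectors $zv_\l$ of $M$, $z\in B_M$ (Corollary~\ref{basisofm}), with $\text{deg}(z)\le n$. From the formulas in Theorem~\ref{actofca} and the definition~\eqref{def-Omega} of $\Omega_1$ one checks that $\Psi_M^f(\lcup)$, $\Psi_M^f(\lcap)$, $\Psi_M^f(\swap)$ and $\Psi_M^f(\cldot)$ preserve $F_\bullet$, whereas each $\Psi_M^f(x_k)$ maps $F_n$ into $F_{n+1}$ (because every $g\in\mfg$ raises the $M$-degree by at most $1$). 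Hence, writing a normally ordered $d$ as $\undot d\circ x_r^{\beta_r(d)}\circ\cdots\circ x_1^{\beta_1(d)}$ as in~\eqref{d}, the operator $\Psi_M^f(d)$ maps $F_n$ into $F_{n+|\beta(d)|}$, where $|\beta(d)|=\sum_k\beta_k(d)$. Moreover, since $d$ has type $\up^r\to\up^r$, the diagram $\undot d$ has no cups or caps: it is a permutation of the $r$ strands carrying some $\emptydot$'s, so $\Psi_M^f(\undot d)$ takes each basis vector $v_{\mathbf i}\otimes zv_\l$ to a nonzero scalar multiple of the basis vector $v_{\mathbf i'}\otimes zv_\l$, where $\mathbf i\mapsto\mathbf i'$ is the signed permutation of coordinates prescribed by $\undot d$; in particular the $M$-factor is unchanged, and $\undot d$ can be recovered from this action whenever the coordinates of $\mathbf i$ have pairwise distinct absolute values.

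Now suppose $\sum_{d\in A}p_d\Psi_M^f(d)=0$ while not all $p_d$ vanish. Let $N=\max\{|\beta(d)|:p_d\ne0\}$, $A_N=\{d\in A:|\beta(d)|=N,\ p_d\ne0\}$, let $\beta^*$ be the largest element of $\{\beta(d):d\in A_N\}$ in the total order on $\underline\ell^r$ defined above, and $A_N^*=\{d\in A_N:\beta(d)=\beta^*\}$. Apply $\sum p_d\Psi_M^f(d)$ to $w^{\mathbf 0}\in F_0$ and read the result modulo $F_{N-1}$; only the $d\in A_N$ contribute. For $d\in A_N$, Lemmas~\ref{adlee} and~\ref{L:coefficients technical lemma} — more precisely the argument proving Lemma~\ref{L:coefficients technical lemma} — show that, among the basis vectors whose $M$-factor has the special shape $x^\beta v_\l$ with $\beta\in\underline\ell^r$, the degree-$N$ part of $\Psi_M^f(x_r^{\beta_r(d)}\cdots x_1^{\beta_1(d)})(w^{\mathbf 0})$ equals $\sum_{\beta\le\beta(d),\,|\beta|=N}c_{\beta,\beta(d)}w^\beta$ with $c_{\beta(d),\beta(d)}=\pm1$; in particular no ``new'' basis vector of this special form appears. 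Applying $\Psi_M^f(\undot d)$ — which fixes the $M$-factor and sends each $w^\beta$ to a nonzero multiple of $v_{\mathbf j}\otimes x^\beta v_\l$, $\mathbf j$ being the signed permutation of the coordinate tuple of $v^\beta$ prescribed by $\undot d$ — we find that the coefficient of the basis vector $b_d$ (the one with $\Psi_M^f(\undot d)(w^{\beta^*})$ a nonzero multiple of $b_d$) in $\sum_{d'\in A}p_{d'}\Psi_M^f(d')(w^{\mathbf 0})$ modulo $F_{N-1}$ is a sum over those $d'$ with $\beta^*\le\beta(d')$ for which $\Psi_M^f(\undot{d'})(w^{\beta^*})$ is proportional to $b_d$. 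But $d'\in A_N$ forces $\beta(d')\le\beta^*$, hence $\beta(d')=\beta^*$; and since the coordinates of $v^{\beta^*}$ have pairwise distinct absolute values (by the choice of the $\mathfrak l$-dominant weight in~\eqref{de of lam} and of the $i_{k,j},\g_j$ in Definition~\ref{vbeta}, together with $n_i\ge2r$), the proportionality forces $\undot{d'}=\undot d$, hence $d'=d$. So the coefficient of $b_d$ is a nonzero scalar times $p_d$, whence $p_d=0$ for every $d\in A_N^*\ne\varnothing$ — a contradiction. Therefore all $p_d=0$.

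The step I expect to be the real obstacle is the one hidden in the third paragraph: showing that no \emph{spurious} degree-$N$ basis vector whose $M$-factor has the special shape $x^\beta v_\l$ can be produced by $\Psi_M^f(x_r^{\beta_r(d')}\cdots x_1^{\beta_1(d')})(w^{\mathbf 0})$ when $\beta(d')\ne\beta$. This is precisely where the elaborate choices of the weight in~\eqref{de of lam} and of the indices $i_{k,j}$ and $\g_j$ in Definition~\ref{vbeta} are used, and it is essentially the content of the proofs of Lemmas~\ref{adlee} and~\ref{L:coefficients technical lemma}. Once that is isolated, the rest is the bookkeeping above combined with two soft observations: that $\Psi_M^f(\undot d)$ merely signed-permutes basis vectors while fixing the $M$-factor, and that $v^{\beta^*}$ has pairwise distinct absolute values among its coordinates.
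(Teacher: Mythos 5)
Your proposal is correct and follows essentially the same route as the paper: both apply the superfunctor $\Psi_M^f$ to $w^{\mathbf 0}$, use the triangularity $c_{\beta,\beta}=\pm1$, $c_{\beta,\beta'}=0$ for $\beta'<\beta$ from Lemma~\ref{L:coefficients technical lemma}, and exploit that $\undot{d}$ is an invertible signed permutation of the $V$-factors fixing the $M$-factor (the paper phrases this by composing with $\undot{d_0}^{-1}$ and extracting the coefficient of $w^{\beta(d_0)}$, which is equivalent to your extraction of the coefficient of $b_{d_0}$). Your write-up is in fact somewhat more explicit than the paper's on two points it leaves implicit: the verification that $\Psi_M^f$ descends to $\OBC^f_{\mathbb C}(\delta)$, and the separation of diagrams sharing the maximal $\beta$ but having different underlying diagrams $\undot{d}$ via the pairwise distinct indices in Definition~\ref{vbeta}.
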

\begin{proof}
Since  $\undot{d}:\up^r\to\up^r$ can be  decomposed in terms of  $\swap$'s and $\cldot$'s, we see that $\undot{d}$  is invertible. Suppose $\sum_{d\in A} p_d d=0$. If there is a $d\in A$ such that $p_d\neq 0$, we can find
a  $d_0\in A$  such that $p_{d_0}\not=0$ and $\beta(d_0)\geq \beta(d)$ for all $d\in A$ with $p_{d} \not=0$. Thanks to Lemma~\ref{L:coefficients technical lemma},
  the coefficient  of $w^{\beta(d_0)}$ in $\Psi_M^f(\undot{d_0}^{-1}\circ \sum_{d\in A}p_{d} d)(w^{\mathbf{0}})$ is $p_{d_0}c_{\beta(d_0),\beta(d_0)}$, which is non-zero. So,  $\Psi_M^f(\sum_{d\in A}p_{d} d)\not=0$, a contradiction.
\end{proof}

Now, we consider $z_k(\lambda)$ as a polynomial in variables  $n_1,n_2, \ldots, n_{a+b}$ with coefficients in $\mathbb C$.
Given a weight $\l$ in  \eqref{de of lam}, we consider  the morphism
\begin{equation}\label{definition of varphi}
\psi: \mathbb C^{a+b}\rightarrow \mathbb C^{a+b}, \quad (n_1,n_2,\ldots, n_{a+b}) \mapsto(z_1(\lambda),z_2(\lambda), \ldots, z_{a+b}(\lambda)).
\end{equation}

\begin{Lemma}\label{dominant}Let $\psi$ be the morphism   in \eqref{definition of varphi}. Then $\psi$  is dominant over $\mathbb C$.

\end{Lemma}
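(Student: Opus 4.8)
The plan is to obtain a closed form for the generating series $\sum_{r\ge 1}z_r(\lambda)T^r$ and then read off from it that $\psi$ factors through standard dominant morphisms. \emph{First}, working in $\mathbb{C}[\lambda_1,\dots,\lambda_n][[T]]$ and starting from Definition~\ref{zrlam}, one sums the geometric series in the exponents $a_1,\dots,a_s$: with $y_i:=\lambda_i(\lambda_i+1)$ the product $\prod_{j}\lambda_{i_j}(\lambda_{i_j}^2+\lambda_{i_j})^{a_j}$ contributes $\prod_j\frac{\lambda_{i_j}T}{1-y_{i_j}T}$, and summing over $s$ and over $i_1<\dots<i_s$ with weight $2^{s-1}$ gives
\[
1-2\sum_{r\ge 1}z_r(\lambda)T^r=\prod_{i=1}^{n}\Bigl(1+\frac{2\lambda_iT}{1-y_iT}\Bigr)=\prod_{i=1}^n\frac{1-\lambda_i(\lambda_i-1)T}{1-\lambda_i(\lambda_i+1)T}.
\]
Writing $g(x)=x(x+1)$, the factor attached to a coordinate $\lambda_i=\mu$ is $\tfrac{1-g(\mu-1)T}{1-g(\mu)T}$.

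\emph{Second}, substitute the weight $\lambda$ of \eqref{de of lam}. Its coordinates split into $a+b$ runs of consecutive integers, the run with top value $l_i$ having length $n_{i+1}$ (together with a run of zeros when $\epsilon=1$, which does not involve $n_1,\dots,n_{a+b}$). Inside each run the factors telescope, the run with top value $l_i$ contributing $\tfrac{1-g(l_i-n_{i+1})T}{1-g(l_i)T}$, and the zero run contributing $1$. Since $g(-1)=0$ on the $a$ runs with $l_i=-1$, while on each of the remaining $b$ runs $g(l_i)$ equals one of the nonzero scalars $u_1,\dots,u_b$ of $f$, we obtain
\[
\Bigl(1-2\sum_{r\ge 1}z_r(\lambda)T^r\Bigr)\prod_{j=1}^{b}(1-u_jT)=\prod_{k=1}^{a+b}\bigl(1-\phi_k\,T\bigr),
\]
where each $\phi_k$ is a non-constant polynomial of degree two in the single variable $n_k$ (explicitly $\phi_k=g(c_k-n_k)$ for the appropriate constant $c_k$), and the $u_j$ are fixed independently of $n_1,\dots,n_{a+b}$.

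\emph{Third}, compare the coefficients of $T^1,\dots,T^{a+b}$ on the two sides of the previous identity. For $1\le m\le a+b$ this expresses $(-1)^m e_m(\phi_1,\dots,\phi_{a+b})$ as $-2z_m(\lambda)$ plus an affine function of $z_1(\lambda),\dots,z_{m-1}(\lambda)$ with constant coefficients; solving this lower-triangular system (the diagonal entries being $\pm\tfrac12$) realizes $(z_1(\lambda),\dots,z_{a+b}(\lambda))$ as the image of $(e_1(\phi),\dots,e_{a+b}(\phi))$ under an affine automorphism of $\mathbb{C}^{a+b}$. Thus $\psi$ is the composition of three maps: $(n_1,\dots,n_{a+b})\mapsto(\phi_1,\dots,\phi_{a+b})$, which is surjective because each $\phi_k$ is a non-constant polynomial in $n_k$ and $\mathbb{C}$ is algebraically closed; the elementary-symmetric map $(\phi_1,\dots,\phi_{a+b})\mapsto(e_1(\phi),\dots,e_{a+b}(\phi))$, which is surjective since every monic complex polynomial splits; and the affine automorphism above. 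Hence $\psi$ is surjective, in particular dominant over $\mathbb{C}$.

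The only real work is the combinatorial identity in the first step and checking that the telescoping in the second step is applied to the correct consecutive runs (and that the zero run and, when $\epsilon=1$, the additional $\mathfrak q(n_{a+b+1})$-block contribute nothing to $\psi$); once the closed form is in hand, the conclusion is formal.
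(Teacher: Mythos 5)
Your proof is correct, but it follows a genuinely different route from the paper's. The paper argues infinitesimally: it computes the Jacobian matrix $J_\psi=(\partial z_k(\lambda)/\partial n_s)$, isolates the leading term $a_k n_s^{2k-1}$ of each entry (with $a_k\neq 0$ independent of $s$), and concludes $\det J_\psi\neq 0$ from the nonvanishing of the resulting Vandermonde-type determinant in $n_1,\dots,n_{a+b}$; dominance follows. You instead establish the closed form $\bigl(1-2\sum_{r\ge1}z_r(\lambda)T^r\bigr)\prod_{j=1}^b(1-u_jT)=\prod_{k=1}^{a+b}(1-\phi_kT)$ — I have checked the geometric-series summation, the identity $1+\tfrac{2\lambda T}{1-\lambda(\lambda+1)T}=\tfrac{1-\lambda(\lambda-1)T}{1-\lambda(\lambda+1)T}$, and the telescoping over each run of consecutive coordinates of the weight in \eqref{de of lam}, including the vanishing of the $\lambda_i=-1$ denominators and the triviality of the zero block — and then factor $\psi$ as (separately surjective quadratic substitutions in each $n_k$) $\circ$ (elementary symmetric map) $\circ$ (affine automorphism coming from the lower-triangular system with diagonal $-2$). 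This buys a stronger conclusion (surjectivity of $\psi$, not merely dominance) and makes the structural reason for Corollary~\ref{zeropoly} transparent, at the cost of the extra combinatorial identity in your first step; the paper's Jacobian computation is shorter and needs only the top-degree term of $z_k$. One point worth making explicit in your write-up: the generating-function identity is first an identity in $\mathbb{C}[\lambda_1,\dots,\lambda_n][[T]]$ for each fixed rank $n$, and the resulting coefficient identities become identities of polynomials in $n_1,\dots,n_{a+b}$ only after observing that they hold on the Zariski-dense set of admissible integer tuples — the same extension step the paper itself invokes for its $g_{k,s}$.
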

\begin{proof} We view  $n_1,n_2, \ldots, n_{a+b}$ as variables. So,
it  is enough to show that the determinant $\det J_\varphi$ is non zero, where   $J_\psi:= (\frac{\partial z_k(\lambda)}{\partial n_s} )_{1\leq k,s\leq a+b}$ is the   Jacobian matrix.

We claim that the highest degree term  of $\frac{\partial z_k(\lambda)}{\partial n_s}$  is $a_k n_s^{2k-1}$, for some $a_k\neq 0$ such that $a_k$ is independent of $s$ whenever $s\neq k$.
If so,  then, up to some non-zero scalar,  the term of $\det J_\psi$ with  the highest degree forms the following determinant
$$ \text{det}\left(
     \begin{array}{cccc}
       n_1 & n_2 & \ldots & n_{a+b} \\
       n_1^3 & n_2^3 & \ldots & n_{a+b}^3 \\
       \vdots & \vdots & \ddots& \vdots\\
       n_1^{2(a+b)-1} & n_2^{2(a+b)-1} & \ldots & n_{a+b}^{2(a+b)-1} \\
     \end{array}
   \right),
$$
which is non-zero.
So, it remains to prove the claim.
By \eqref{zrlam}, the highest degree term  of $\frac{\partial z_k(\lambda)}{\partial n_s}$  only appears in  some term of $\frac{-\partial\sum_{i=p_{s-1}+1}^{p_s}\lambda_i(\lambda_i^2+\lambda_i)^{k-1}}{\partial n_s}$. Therefore, it
 appears in  some term  of $\frac{-\partial g_{k,s}}{\partial n_s}$, where
 $g_{k,s}:=\sum_{i=p_{s-1}+1}^{p_s}\lambda_i^{2k-1}$. Thanks to  \eqref{de of lam}, we  write
$$g_{k,s}= \sum_{j=1}^{n_s} (l_s-n_s+j)^{2k-1}.$$
This shows that the highest degree term of $g_{k,s}$  is $b_k n_s^{2k}$, where $b_k\in\mathbb C^*$, a non-zero scalar which  is independent of $s$. However, when  we use \eqref{de of lam} to obtain $g_{k,s}$,  $ n_s $ ranges over all  big enough even integers. Since $g_{k,s}$ is a polynomial in variable  $n_s$ with coefficients in $\mathbb C$,  it is  available for all  $n_s\in \mathbb C$. So, the  term  of $\frac{\partial g_{k,s}}{\partial n_s}$ with  highest degree is $ 2k b_k n_s^{2k-1}$. This proves our claim  by setting  $a_k=-2kb_k$.
\end{proof}

\begin {Cor}\label{zeropoly}As $(n_1, n_2,\ldots, n_{a+b})$ ranges over all sequences of even integers  such that $ n_i\geq 2r$ for $1\leq i\leq a+b$, the set of points
$(z_1(\lambda),z_2(\lambda), \ldots, z_{a+b}(\lambda))$ defined by \eqref{zrlam} is Zariski dense in $\mathbb C^{a+b}$.
\end{Cor}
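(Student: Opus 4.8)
The plan is to deduce this immediately from Lemma~\ref{dominant} by a short Zariski-density argument, so very little new work is needed. Write $S:=\{(n_1,\ldots,n_{a+b})\in\mathbb C^{a+b}\mid n_i\in 2\mathbb Z\text{ and }n_i\ge 2r\text{ for }1\le i\le a+b\}$ for the set of admissible parameters. As in the discussion preceding \eqref{definition of varphi}, once the scalars $l_i$ (which are determined by the fixed polynomial $f$) are chosen, each $z_k(\lambda)$ becomes an honest polynomial in the variables $n_1,\ldots,n_{a+b}$, so $\psi$ is a morphism of affine varieties $\mathbb C^{a+b}\to\mathbb C^{a+b}$ and Lemma~\ref{dominant} applies to it. The goal is to show that $\overline{\psi(S)}=\mathbb C^{a+b}$, where the bar denotes Zariski closure.

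First I would record that $S$ itself is Zariski dense in $\mathbb C^{a+b}$: if $P\in\mathbb C[x_1,\ldots,x_{a+b}]$ vanishes identically on $S$, then, fixing all but one of the variables and using that $\{n\in 2\mathbb Z\mid n\ge 2r\}$ is an infinite subset of $\mathbb C$, an induction on the number of variables (a nonzero one-variable polynomial has only finitely many roots) forces $P=0$. Hence no proper Zariski-closed subset of $\mathbb C^{a+b}$ can contain $S$.

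Next I would set $W:=\overline{\psi(S)}$. Since $\psi$ is continuous for the Zariski topology, $\psi^{-1}(W)$ is closed and contains $S$, so $\psi^{-1}(W)=\mathbb C^{a+b}$ by the density of $S$; equivalently $\psi(\mathbb C^{a+b})\subseteq W$, and taking closures gives $\overline{\psi(\mathbb C^{a+b})}\subseteq W\subseteq\mathbb C^{a+b}$. But $\psi$ is dominant by Lemma~\ref{dominant}, i.e.\ $\overline{\psi(\mathbb C^{a+b})}=\mathbb C^{a+b}$; therefore $W=\mathbb C^{a+b}$, which is precisely the assertion that the points $(z_1(\lambda),\ldots,z_{a+b}(\lambda))$ are Zariski dense in $\mathbb C^{a+b}$.

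There is no serious obstacle here: the two ingredients are the density of the lattice-type set $S$ (entirely elementary) and the dominance of $\psi$ (already established in Lemma~\ref{dominant}). The only point requiring a little care is the bookkeeping that $\psi$ is genuinely a polynomial map in $n_1,\ldots,n_{a+b}$ with the $l_i$ held fixed, so that Lemma~\ref{dominant} can be invoked verbatim.
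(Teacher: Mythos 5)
Your proposal is correct and follows the same route as the paper: the paper's proof of this corollary is the single line ``Thanks to Lemma~\ref{dominant}, we have the result,'' and you have simply supplied the standard details it leaves implicit, namely that the lattice set $S$ is Zariski dense in $\mathbb C^{a+b}$ and that a dominant morphism carries a dense set to a dense set. Both of your supporting steps are carried out correctly, so nothing further is needed.
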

\begin{proof}Thanks to Lemma~\ref{dominant}, we have the result as required.
\end{proof}
For any positive  integer $b$,  let $\mathcal Z:=\mathbb Z[\hat u_1,\cdots, \hat u_b]$ be the ring of polynomials in variables $\hat u_1,\cdots, \hat u_b$.

\begin{Theorem}\label{mainf} Suppose  $r$ is a positive integer. Let $\OBC^{\hat f}_{\mathcal Z}$ be the cyclotomic oriented Brauer-Clifford supercategory over $\mathcal Z$ with respect to  $\hat f(t)=t^{2a+\epsilon}\prod_{i=1}^b(t^2-\hat u_i)\in\mathcal Z[t]$.    Let   $S$   be the set of  all elements $$
p_d(\Delta_1, \Delta_3, \ldots, \Delta_{2(a+b)-1})\undot{d}\circ x_r^{\beta_r(d)}\circ\cdots\circ x_1^{\beta_1(d)},$$
where \begin{enumerate} \item $d$ ranges over all equivalence classes of  normally ordered dotted oriented Brauer-Clifford diagrams without bubbles of type  $\up^r\to\up^r$,\item   $p_d(t_1,t_3,\ldots,t_{2(a+b)-1})$ ranges over all  polynomials in $ \mathcal Z[t_1, \ldots, t_{2(a+b)-1}]$,\item
 $(\beta_r(d), \beta_{r-1}(d), \ldots, \beta_1(d))\in \underline \ell^r$, where $\underline \ell=\{0, 1, \ldots, \ell-1\}$.\end{enumerate}
\noindent For a finite subset $A\subset  S$, $$\sum p_d(\Delta_1, \Delta_3, \ldots, \Delta_{2(a+b)-1}) d=0$$ (where the summation is over all elements in $A$) if and only if  $p_d(t_1,  \ldots, t_{2(a+b)-1} )=0$ for all  $p_d(\Delta_1, \Delta_3, \ldots, \Delta_{2(a+b)-1}) d  \in A$.
 \end{Theorem}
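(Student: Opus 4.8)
The plan is to deduce the statement over $\mathcal Z = \mathbb Z[\hat u_1,\dots,\hat u_b]$ from the corresponding result over $\mathbb C$, namely Proposition~\ref{basis theorem for level l Hecke-Clifford}, by a specialization argument. First I would reduce to a concrete linear-algebra question. Expanding each $p_d$ in the monomial basis $\{t_1^{c_1}t_3^{c_3}\cdots t_{2(a+b)-1}^{c_{a+b}}\}$ of $\mathcal Z[t_1,\dots,t_{2(a+b)-1}]$, a relation $\sum_{d\in A} p_d(\Delta_1,\dots,\Delta_{2(a+b)-1})\,d = 0$ in $\operatorname{Hom}_{\OBC^{\hat f}_{\mathcal Z}}(\up^r,\up^r)$ becomes a $\mathcal Z$-linear relation among the elements $\Delta_1^{c_1}\Delta_3^{c_3}\cdots\Delta_{2(a+b)-1}^{c_{a+b}}\,\undot{d}\circ x_r^{\beta_r(d)}\circ\cdots\circ x_1^{\beta_1(d)}$. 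So it suffices to prove that this (countable) family of morphisms, indexed by triples (a diagram $d$, an exponent vector $\beta(d)\in\underline\ell^r$, an exponent vector $(c_1,\dots,c_{a+b})\in\mathbb Z_{\ge 0}^{a+b}$), is $\mathcal Z$-linearly independent in $\operatorname{Hom}_{\OBC^{\hat f}_{\mathcal Z}}(\up^r,\up^r)$.

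Next I would pass to $\mathbb C$ and use the functors built in Section~2. Fix a finite $\mathcal Z$-linear combination that vanishes; clearing denominators we may assume the coefficients lie in $\mathbb Z[\hat u_1,\dots,\hat u_b]$. For any choice of positive even integers $n_1,\dots,n_{a+b}$ with $n_i\ge 2r$ and any admissible $\epsilon,a,b$, the $\mathfrak l$-dominant weight $\lambda$ of \eqref{de of lam} gives, via Theorem~\ref{zerop}(c), a superfunctor $\Psi^f_{M^{\mathfrak p}(\lambda)}\colon \OBC^f_{\mathbb C}\to\mathcal O^{\mathfrak p}$ where $f(t)=t^{2a+\epsilon}\prod_{i=1}^b(t^2-u_i)$ with $u_i=l_{a+i}(l_{a+i}+1)$. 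By Proposition~\ref{actionofsr}(2) this functor sends $\Delta_{2k-1}\mapsto -2z_k(\lambda)\,\mathrm{id}$ on the highest weight vector, so the image of our relation, evaluated on $w^{\mathbf 0}=v^{\mathbf 0}\otimes v_\lambda$, becomes a $\mathbb C$-linear relation of the shape treated in Proposition~\ref{basis theorem for level l Hecke-Clifford}, but with each $p_d(\Delta_1,\dots)$ replaced by the \emph{scalar} $p_d(-2z_1(\lambda),-2z_3(\lambda),\dots)$ — wait, more precisely by $p_d$ evaluated at the point $\bigl(-2z_1(\lambda),\dots,-2z_{a+b}(\lambda)\bigr)$ (the odd-indexed $\Delta$'s only). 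Proposition~\ref{basis theorem for level l Hecke-Clifford} then forces all these evaluated scalars to vanish: for every such $\lambda$,
\[
p_d\bigl(-2z_1(\lambda),\,-2z_2(\lambda),\,\dots,\,-2z_{a+b}(\lambda)\bigr)=0 \quad\text{for all }d\in A.
\]

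Finally I would invoke the Zariski density statement. By Corollary~\ref{zeropoly}, as $(n_1,\dots,n_{a+b})$ runs over even integers with $n_i\ge 2r$, the points $(z_1(\lambda),\dots,z_{a+b}(\lambda))$ are Zariski dense in $\mathbb C^{a+b}$; applying the linear automorphism $v\mapsto -2v$ preserves density, so the points $(-2z_1(\lambda),\dots,-2z_{a+b}(\lambda))$ are Zariski dense as well. A polynomial in $\mathbb C[t_1,\dots,t_{a+b}]$ vanishing on a Zariski dense set is identically zero, hence each $p_d$ (now regarded with complex coefficients, obtained by specializing the $\hat u_i$ appropriately) is the zero polynomial. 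To conclude over $\mathcal Z$ itself, I note that the coefficients of $p_d$ lie in $\mathbb Z[\hat u_1,\dots,\hat u_b]$ and the specializations $\hat u_i\mapsto u_i = l_{a+i}(l_{a+i}+1)$ range over all of $\mathbb C$ (by the Remark after Theorem~\ref{zerop}); a polynomial over $\mathcal Z$ all of whose complex specializations vanish must be zero, since $\mathbb Z[\hat u_1,\dots,\hat u_b]\hookrightarrow\prod_{u\in\mathbb C^b}\mathbb C$ is injective. This gives $p_d=0$ in $\mathcal Z[t_1,\dots,t_{2(a+b)-1}]$ for all $d\in A$, as required; the converse direction is trivial.

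\textbf{Main obstacle.} The delicate point is bookkeeping: one must be careful that the single weight $\lambda$ of \eqref{de of lam} simultaneously (i) makes $\Psi^f_{M^{\mathfrak p}(\lambda)}$ land in the \emph{correct} cyclotomic quotient $\OBC^f_{\mathbb C}$ so that the relation in $\OBC^{\hat f}_{\mathcal Z}$ actually maps there after specializing $\hat u_i\mapsto u_i$, and (ii) realizes, as $(n_1,\dots,n_{a+b})$ varies, enough $z_k(\lambda)$-values for the density argument — this is exactly what Lemma~\ref{dominant} and Corollary~\ref{zeropoly} are for. The other subtlety is that vanishing of a morphism in $\OBC^{\hat f}_{\mathcal Z}$ need not be detected by a single functor; it is detected only after ranging over \emph{all} admissible $\lambda$, so the density argument must be applied to the collection of scalar equations rather than to any one of them. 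Once these two points are organized, the rest is a direct assembly of Propositions~\ref{basis theorem for level l Hecke-Clifford}, \ref{actionofsr} and Corollary~\ref{zeropoly}.
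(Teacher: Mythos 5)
Your proposal is correct and follows essentially the same route as the paper's proof: specialize $\hat u_i\mapsto u_i$ to pass to $\OBC^f_{\mathbb C}$, apply $\Psi^f_{M^{\mathfrak p}(\lambda)}$ together with Proposition~\ref{basis theorem for level l Hecke-Clifford} to kill the evaluated scalars, invoke Corollary~\ref{zeropoly} to conclude the specialized polynomials vanish, and then vary the $l_{a+i}$ (equivalently the $u_i$, which range over $\mathbb C^*$ rather than all of $\mathbb C$, a harmless imprecision since $(\mathbb C^*)^b$ is still Zariski dense) to get vanishing in $\mathcal Z[t_1,t_3,\ldots,t_{2(a+b)-1}]$.
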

\begin{proof}Suppose $ \sum p_d(\Delta_1, \Delta_3, \ldots, \Delta_{2(a+b)-1}) d=0$ in $\OBC^{\hat f}_{\mathcal Z}$.  There is a ring homomorphism from $\mathcal Z$ to $\mathbb C$, sending $\hat u_i$ to $u_i$ for all admissible $i$, where
$u_i$'s are given in Theorem~\ref{zerop}. We have $$ \sum p_d(\Delta_1, \Delta_3, \ldots, \Delta_{2(a+b)-1}) d\otimes_{\mathcal Z } 1=0 \text{ in $\OBC^{f}_{\mathbb C}$.}$$ Applying  the    superfunctor
$\Psi_{M^{\mathfrak p}(\l)}^f$ on the previous equation (see Theorem~\ref{zerop}) and using   Proposition~\ref{basis theorem for level l Hecke-Clifford}, we have $$ p_d(\delta_1, \delta_3, \ldots, \delta_{2(a+b)-1}) =0,$$ where
$p_d(\delta_1, \delta_3, \ldots, \delta_{2(a+b)-1})$ is obtained from
$p_d(\Delta_1, \Delta_3, \ldots, \Delta_{2(a+b)-1}) $ by replacing
$u_i$ (resp., $\Delta_{2j-1}$) with $l_{a+i}(l_{a+i}+1)$ (resp., $\delta_{2j-1}$). Thanks to Corollary~\ref{zeropoly},   $\tilde p_{d}(t_1,t_3,\ldots, t_{2(a+b)-1})=0$, where $\tilde p_{d}(t_1,t_3,\ldots, t_{2(a+b)-1})$ is obtained from $p_{d}(t_1,t_3,\ldots, t_{2(a+b)-1})$ by specializing $\hat u_i$ at $l_{a+i}(l_{a+i}+1)$, $1\le i\le b$. By Theorem~\ref{zerop} and \eqref{de of lam}, there are infinite choices of $l_{a+1}, l_{a+2}, \ldots, l_{a+b}$ such that   $\tilde p_{d}(t_1,t_3,\ldots, t_{2(a+b)-1})=0$. Further, the choices of $l_{a+i}$ and $l_{a+j}$ are independent whenever $i\neq j$. Thanks to the fundamental theorem of  algebra, we have $p_{d}(t_1,t_3,\ldots, t_{2(a+b)-1})=0$ in $\mathcal Z[t_1, t_3, \cdots, t_{2(a+b)-1}]$.
\end{proof}

\begin{Cor}\label{anyring}
For any nonnegative  $r$ and commutating ring $\kappa$  containing $2^{-1}$, $\End_{\OBC_\kappa^f}(\uparrow^r)$ has basis given by  all equivalence classes of normally ordered dotted oriented Brauer-Clifford diagrams with bubbles of type $\up^r\to\up^r$ with fewer than $\ell$ $\fulldot$'s on each strand.
\end{Cor}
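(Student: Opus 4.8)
The plan is to obtain Corollary~\ref{anyring} --- which is the case $\ob a=\ob b=\up^r$ of Conjecture~\ref{Cyclotomic basis conjecture} --- from Theorem~\ref{mainf} by passing to a universal ring and then specializing. Write $\ell=2a+\epsilon+2b$ for the degree of $f$; then the largest odd integer strictly less than $\ell$ is $2(a+b)-1$, so a normally ordered dotted oriented Brauer-Clifford diagram with bubbles of type $\up^r\to\up^r$ in which every strand, including every bubble loop, carries fewer than $\ell$ $\fulldot$'s is precisely a monomial in $\Delta_1,\Delta_3,\dots,\Delta_{2(a+b)-1}$ times a normally ordered dotted oriented Brauer-Clifford diagram $d$ \emph{without} bubbles whose $\fulldot$-vector $(\beta_r(d),\dots,\beta_1(d))$ lies in $\underline\ell^r$, and by \eqref{d} the latter equals $\undot d\circ x_r^{\beta_r(d)}\circ\cdots\circ x_1^{\beta_1(d)}$. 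Hence the set of such diagrams, taken with arbitrary coefficients, is exactly the set $S$ of Theorem~\ref{mainf} (letting $p_d$ range over all polynomials, equivalently all monomials).

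First I would record the spanning statement: over any commutative ring $\kappa$ containing $2^{-1}$, the images of these diagrams span $\End_{\OBC_\kappa^f}(\up^r)$. This follows from Theorem~\ref{main-aff}, which supplies a $\kappa$-basis of $\End_{\AOBC_\kappa}(\up^r)$, after passing to the cyclotomic quotient: the defining relation $f(\xdot)=0$, with $f$ monic of degree $\ell$, lowers the number of $\fulldot$'s on any inward-pointing segment to at most $\ell-1$, and the cyclotomic bubble relations of \cite{CK} rewrite $\Delta_{2k-1}$ for $2k-1\ge\ell$ (that is, for $k>a+b$) as a polynomial in $\Delta_1,\dots,\Delta_{2(a+b)-1}$; after each such move one re-normalizes, and since the process strictly decreases a $\fulldot$-count it terminates within $S$.

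Next I would prove the assertion over $\mathcal{Z}'=\mathbb{Z}[2^{-1}][\hat u_1,\dots,\hat u_b]$, working with $\hat f(t)=t^{2a+\epsilon}\prod_{i=1}^b(t^2-\hat u_i)$. Theorem~\ref{mainf} already gives linear independence of $S$ over $\mathcal{Z}=\mathbb{Z}[\hat u_1,\dots,\hat u_b]$; since $\mathcal{Z}$ is a domain with fraction field $\mathbb{Q}(\hat u_1,\dots,\hat u_b)\supseteq\mathcal{Z}'$, clearing denominators upgrades this to linear independence of $S$ over $\mathcal{Z}'$. Combined with the spanning statement over $\mathcal{Z}'$ from the previous step, the diagrams in question form a $\mathcal{Z}'$-basis of $\End_{\OBC^{\hat f}_{\mathcal{Z}'}}(\up^r)$; in particular that Hom-superspace is free over $\mathcal{Z}'$. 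Then for an arbitrary $\kappa$ and $\mathbf u=(u_1,\dots,u_b)\in(\kappa^\ast)^b$ I would use the ring homomorphism $\mathcal{Z}'\to\kappa$, $\hat u_i\mapsto u_i$: because $\AOBC$ is presented by generators and relations with coefficients in $\mathbb{Z}$ and the cyclotomic ideal is the left tensor ideal generated by the single morphism $f(\xdot)$, the construction commutes with base change, so $\End_{\OBC_\kappa^f}(\up^r)\cong\End_{\OBC^{\hat f}_{\mathcal{Z}'}}(\up^r)\otimes_{\mathcal{Z}'}\kappa$. Base-changing a free module along a basis produces a free module with the image basis, and the indexing set of diagrams is independent of the ground ring, so these diagrams form a $\kappa$-basis of $\End_{\OBC_\kappa^f}(\up^r)$.

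I expect the main obstacle to be the diagrammatic bookkeeping underlying the spanning step: confirming carefully that in $\OBC_\kappa^f$ every bubble can be rewritten through $\Delta_1,\dots,\Delta_{2(a+b)-1}$ and every strand brought into normally ordered form with fewer than $\ell$ $\fulldot$'s, so that the spanning set really coincides with the set $S$ of Theorem~\ref{mainf}. All of the substantive input --- the parabolic category $\mathcal{O}$ for $\mathfrak{q}(n)$, Sergeev's central elements, and the Zariski-density argument --- is already internal to Theorem~\ref{mainf}. A minor point worth flagging is that $\kappa$ may carry torsion or zero divisors; this causes no trouble precisely because freeness is first secured over the well-behaved ring $\mathcal{Z}'$.
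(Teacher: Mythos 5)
Your proposal is correct and follows essentially the same route as the paper: spanning is quoted from Comes--Kujawa, linear independence is pulled from Theorem~\ref{mainf} over the universal polynomial ring in the $\hat u_i$'s, and the general case is obtained by base change along $\hat u_i\mapsto u_i$. The only (harmless) cosmetic differences are that you work over $\mathbb Z[2^{-1}][\hat u_1,\dots,\hat u_b]$ rather than $\mathcal Z$ and that you sketch the spanning reduction instead of citing it.
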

\begin{proof}In \cite{CK},  Comes and Kujawa have proved that $\End_{\OBC_\kappa^f }(\uparrow^r)$ is spanned  by all equivalence classes of normally ordered dotted oriented Brauer-Clifford diagrams with bubbles of type $\uparrow^r\to\uparrow^r$ with fewer than $\ell$ $\fulldot$'s on each strand. When $\kappa=\mathcal Z$, the linear independent of such elements  immediately follows from Theorem~\ref{mainf}. This proves the result when $\kappa=\mathcal Z$.  In general, we consider
$\kappa$ as the $\mathcal Z$-module on which  $\hat u_i$'s  act as scalars $u_i$'s for all $1\le i\le b$.
By arguments similar to those for $\AOBC_\kappa$ in \cite{CK},  one can define the  $\kappa$-supercategory $\OBC_{\mathcal Z}^{\hat f} \otimes_{\mathcal Z}\kappa $
with the objects as $\OBC_{\mathcal Z}^{\hat f} $, and the morphisms are
\begin{equation}\label{iso321} \Hom_{ \OBC_{\mathcal Z}^{\hat f} \otimes_{\mathcal Z}\kappa}(\ob a, \ob b)= \Hom_{ \OBC_{\mathcal Z}^{\hat f}}(\ob a, \ob b)\otimes _{\mathcal Z}\kappa.\end{equation} The obvious mutually inverse superfunctors  provide an isomorphism of  supercategories between $\OBC_{\mathcal Z}^{\hat f} \otimes_{\mathcal Z}\kappa $ and $\OBC_\kappa^f$.
The result follows from the   base change, immediately.
\end{proof}

 The following result can be proven by arguments similar to those for $\AOBC_\kappa$ in \cite[\S6]{CK}. The difference is that we have to consider $\OBC_{\mathcal Z}^f$ whereas they can
consider $\AOBC_{\mathbb Z}$.

\begin{Theorem}\label{Cyclotomic basis conjecture1} Conjecture \ref{Cyclotomic basis conjecture} is true over an arbitrary commutative ring  $\kappa$  containing $2^{-1}$. \end{Theorem}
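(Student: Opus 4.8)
The plan is to deduce the statement for all Hom\nobreakdash-superspaces from its special case $\End_{\OBC^f_\kappa}(\up^r)$, Corollary~\ref{anyring}, by the reduction used by Comes and Kujawa in \cite[\S6]{CK}, now carried out inside $\OBC^f_\kappa$ rather than $\AOBC_{\mathbb Z}$. Fix $\ob a,\ob b\in\wrd$. If the arrow balances $\#\up(\ob a)-\#\down(\ob a)$ and $\#\up(\ob b)-\#\down(\ob b)$ differ, then \eqref{bijec} has no solution, there is no diagram of type $\ob a\to\ob b$, and both sides of the assertion are empty; so assume the balances agree and let $r$ be the common cardinality of the two sides of \eqref{bijec}.

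First I would record the spanning half, due to \cite{CK}: arguing as in \cite[\S6]{CK} but for $\Hom_{\OBC^f_\kappa}(\ob a,\ob b)$ in place of $\End_{\OBC^f_\kappa}(\up^r)$, every morphism is a $\kappa$\nobreakdash-linear combination of normally ordered dotted oriented Brauer--Clifford diagrams with bubbles of type $\ob a\to\ob b$ having fewer than $\ell$ $\fulldot$'s on each strand, and, by sliding all bubbles to the right edge, each such diagram equals up to a sign $\pm\, m\cdot d_{\mathrm{bf}}$, where $d_{\mathrm{bf}}$ is bubble\nobreakdash-free, $m$ is a monomial in the bubbles, and $R:=\End_{\OBC^f_\kappa}(\unit)$ acts on morphisms by adjoining a bubble far to the right. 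The case $r=0$ of Corollary~\ref{anyring} identifies $R$ with the polynomial ring $\kappa[\Delta_1,\Delta_3,\dots,\Delta_{2(a+b)-1}]$ and shows that the set $\mathcal B_{\ob a,\ob b}$ of equivalence classes of all such diagrams is the disjoint union, over the monomials $m$ in $\Delta_1,\dots,\Delta_{2(a+b)-1}$, of $\pm\, m\cdot\mathcal B_{\ob a,\ob b}^{\mathrm{bf}}$, where $\mathcal B_{\ob a,\ob b}^{\mathrm{bf}}\subseteq\mathcal B_{\ob a,\ob b}$ is the finite set of bubble\nobreakdash-free classes. Consequently $\mathcal B_{\ob a,\ob b}$ is a $\kappa$\nobreakdash-basis of $\Hom_{\OBC^f_\kappa}(\ob a,\ob b)$ exactly when $\mathcal B_{\ob a,\ob b}^{\mathrm{bf}}$ is an $R$\nobreakdash-basis, and by the above it is at least an $R$\nobreakdash-spanning set.

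Next I would set up the reduction to $\ob a=\ob b=\up^r$. In $\OBC^f_\kappa$ the object $\down$ is a two\nobreakdash-sided dual of $\up$ (via $\lcup$, $\lcap$ and their rotations) and every crossing is invertible by \eqref{OB relations 1 (symmetric group)}--\eqref{OB relations 2 (zigzags and invertibility)}; the standard mate construction, combined with the invertible crossings used to reorder words, then produces a $\kappa$\nobreakdash-linear isomorphism $\Phi\colon\Hom_{\OBC^f_\kappa}(\ob a,\ob b)\xrightarrow{\ \sim\ }\End_{\OBC^f_\kappa}(\up^r)$ which is also $R$\nobreakdash-linear, since adjoining a bubble far from the strands commutes with bending them. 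By Corollary~\ref{anyring} together with the bubble factorization above, $\End_{\OBC^f_\kappa}(\up^r)$ is a \emph{free} $R$\nobreakdash-module with basis $\mathcal B_{\up^r,\up^r}^{\mathrm{bf}}$, of rank $r!\,(2\ell)^r$: there are $r!$ oriented Brauer diagrams of type $\up^r\to\up^r$, each consisting of $r$ through\nobreakdash-strands, and each of those $r$ strands carries, independently, a $\emptydot$ or not on its unique outward boundary segment and some number $0,1,\dots,\ell-1$ of $\fulldot$'s on its unique inward boundary segment. The same count applies to $\mathcal B_{\ob a,\ob b}^{\mathrm{bf}}$: an oriented Brauer diagram of type $\ob a\to\ob b$ is a bijection between the two $r$\nobreakdash-element sets in \eqref{bijec}, and again each of its $r$ strands has exactly one outward and one inward boundary segment, whence $\lvert\mathcal B_{\ob a,\ob b}^{\mathrm{bf}}\rvert=r!\,(2\ell)^r$ as well. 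Therefore $\Phi$ carries the $R$\nobreakdash-spanning set $\mathcal B_{\ob a,\ob b}^{\mathrm{bf}}$ onto an $R$\nobreakdash-spanning set of $\End_{\OBC^f_\kappa}(\up^r)$ whose cardinality equals the rank of this free $R$\nobreakdash-module; since a surjective endomorphism of a finitely generated module over a commutative ring is an isomorphism, that spanning set is an $R$\nobreakdash-basis, so $\mathcal B_{\ob a,\ob b}^{\mathrm{bf}}$ is an $R$\nobreakdash-basis and hence $\mathcal B_{\ob a,\ob b}$ is the required $\kappa$\nobreakdash-basis.

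The step I expect to be the main obstacle is the faithful transcription of \cite[\S6]{CK} to the present setting: establishing the spanning and bubble\nobreakdash-reduction statements for $\Hom_{\OBC^f_\kappa}(\ob a,\ob b)$, and checking that the mate isomorphism $\Phi$ is compatible with the normal forms and with the $R$\nobreakdash-action, over an arbitrary commutative ring $\kappa$ containing $2^{-1}$. These are bookkeeping\nobreakdash-heavy but raise no essentially new point once Corollary~\ref{anyring} is available; alternatively one may run the entire argument over $\mathcal Z=\mathbb Z[\hat u_1,\dots,\hat u_b]$ using Theorem~\ref{mainf} and then invoke the base\nobreakdash-change equivalence $\OBC^{\hat f}_{\mathcal Z}\otimes_{\mathcal Z}\kappa\cong\OBC^f_\kappa$ from the proof of Corollary~\ref{anyring}.
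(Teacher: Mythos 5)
Your proposal is correct, and its skeleton coincides with the paper's: dispose of the unbalanced case, quote Comes--Kujawa's spanning result, transport the problem to $\End_{\OBC_\kappa^f}(\uparrow^r)$ by the bending maps of \cite[(4.2)]{CK} (the maps \eqref{isospace} in the paper), and invoke Corollary~\ref{anyring}. Where you genuinely diverge is in how the conclusion is extracted. The paper first takes $\kappa$ to be a field and compares $\kappa$-dimensions of the finite-dimensional filtered pieces $\Hom_{\OBC_\kappa^f}(\ob a,\ob b)_{\leq k}$ on the two sides, then obtains the result over $\mathcal Z=\mathbb Z[\hat u_1,\dots,\hat u_b]$ by passing to its fraction field, and finally base-changes via \eqref{iso321}. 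You instead work uniformly over an arbitrary $\kappa$ by regarding each Hom-superspace as a module over the bubble algebra $R=\End_{\OBC_\kappa^f}(1)$, noting that the mate isomorphism is $R$-linear (bubbles are even and unshielded, so they slide past everything), and then using the fact that a spanning set whose cardinality equals the rank of a finitely generated free module over a commutative ring is automatically a basis. This buys a cleaner, field-free route to the arbitrary-ring statement, at the cost of two small verifications the paper's route does not need: that $R$ really is the polynomial ring $\kappa[\Delta_1,\Delta_3,\dots,\Delta_{2(a+b)-1}]$ --- the $r=0$ case of Corollary~\ref{anyring}, which is not literally covered by Theorem~\ref{mainf} (stated for $r\ge 1$) but follows by tensoring a putative relation among bubble monomials with an identity strand and applying the $r=1$ case --- and the uniform count $\lvert\mathcal B^{\mathrm{bf}}_{\ob a,\ob b}\rvert=r!\,(2\ell)^r$, which holds because every strand of an oriented Brauer diagram has exactly one inward- and one outward-pointing boundary segment. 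Both arguments are sound.
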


\begin{proof} Suppose that  $\ob a $ (resp. $\ob b$) consists of $r_1$ (resp., $r_1'$) $\uparrow$'s and $r_2$ (resp., $r_2'$ $\downarrow$'s). If $r_1+r_2'\neq r_1'+r_2$, then there is no oriented Brauer diagram of type $\ob a \rightarrow \ob b$, forcing
$\Hom_{\OBC^f} (\ob a, \ob b)=0$.  When $ r_1+r_2'= r_1'+r_2:=r$, and $\kappa$ is a field,  there is a $\kappa$-linear isomorphism
\begin{equation}\label{isospace}
\Hom_{\OBC^f_\kappa} (\ob a, \ob b)_{\leq k}\rightarrow \Hom_{\OBC_\kappa^f} (\downarrow^{r_2}\uparrow^{r_1}, \uparrow^{r_1'}\downarrow^{r_2'})_{\leq k}\rightarrow
\End_{\OBC_\kappa^f} (\uparrow^r)_{\leq k}\end{equation}
defined  in the same way as the top horizontal maps in \cite[(4.2)]{CK}, where $\Hom_{\OBC_\kappa^f} (\ob a, \ob b)_{\leq k}$ is the $\kappa$-span of all dotted oriented Brauer-Clifford diagrams with bubbles of type $\ob a\to\ob b$ having at most $k$ $\fulldot$'s, and $0\leq k\leq \ell-1$. By \eqref{isospace},  $$\dim_\kappa \Hom_{\OBC_\kappa^f} (\ob a, \ob b)=\sum_{k=0}^{\ell-1}\dim_\kappa \Hom_{\OBC_\kappa^f} (\ob a, \ob b)_{\leq k}, $$ forcing
$\dim_\kappa \Hom_{\OBC_\kappa^f} (\ob a, \ob b)=\dim_\kappa \End_{\OBC_\kappa^f} (\uparrow^r)$.   Comes-Kujawa proved that $\Hom_{\OBC_\kappa^f} (\ob a, \ob b)$ is spanned by the set of all equivalence classes of normally ordered dotted oriented Brauer-Clifford diagrams with bubbles of type $\ob a\to\ob b$ with fewer than $\ell$ $\fulldot$'s on each strand whenever $\kappa$ is a commutative ring containing $2^{-1}$,
 Corollary~\ref{anyring} immediately implies Theorem~\ref{Cyclotomic basis conjecture1} over the field $\kappa$. Since  the $\mathcal Z$-linear independent of the set of all equivalence classes of  normally ordered dotted Brauer-Clifford diagrams of type $\ob a\rightarrow \ob b $ follows from the corresponding result over the fraction field of $\mathcal Z$,  we have Theorem~\ref{Cyclotomic basis conjecture1} over $\mathcal Z$. By \eqref{iso321}, we have  Theorem~\ref{Cyclotomic basis conjecture1} over an arbitrary commutative ring $\kappa$ containing $2^{-1}$.\end{proof}

\section{Cyclotomic walled Brauer-Clifford superalgebras}
The aim of this section is to establish  connections between two cyclotomic walled Brauer-Clifford superalgebras defined in \cite{CK, GRSS}.
The level two cases has been dealt with in \cite{GRSS} under the assumption that
$f(t)=t^2-u$ with $u\neq 0$ over the complex field.

 We start by recalling the notion of cyclotomic walled Brauer-Clifford superalgebras in \cite{GRSS}.
Let $\Sigma_r$ be the {\it symmetric group} in $r$ letters.
 Then $\Sigma_r$ is generated by $s_1, \ldots, s_{r-1}$,  subject to the  relations (for all admissible $i$ and $j$):
\begin{equation}\label{symm} s_i^2=1, \ \ s_is_{i+1}s_i=s_{i+1}s_is_{i+1}, \ \ \text{ $s_is_j=s_js_i$, if  $|i-j|>1$.}\end{equation}
The {\it  Hecke-Clifford algebra} $HC_{r}$ is  the  associative $\kappa$-superalgebra generated by even elements $s_1, \ldots, s_{r-1}$ and odd elements $c_1, \ldots, c_r$   subject to \eqref{symm} together with  the   following defining relations (for all admissible  $i, j$):
\begin{equation}\label{sera} c_i^2=-1, \ \ c_ic_j=-c_jc_i, \ \ \text{$w^{-1} c_i w=c_{(i)w}, \forall w\in \Sigma_r$}. \end{equation}
The {\it affine Hecke-Clifford} algebra
$HC_{r}^{\rm aff}$ is the associative $\kappa$-superalgebra generated by even elements  $s_1, \ldots, s_{r-1},\ob x_1$ and odd elements $c_1, \ldots, c_r$   subject to
 \eqref{symm}--\eqref{sera}, together with the following defining relations (for all admissible $i$ and $j$):
 \begin{equation} \label{asera}\ob x_1\ob x_2=\ob x_2\ob x_1, \ \ \ob x_1c_i=(-1)^{\delta_{i, 1}} c_i\ob x_1, \ \ s_j\ob x_1=\ob x_1s_j, \text{if $j\neq 1$, }\end{equation}
 where $\ob x_2=s_1\ob x_1s_1-(1-c_1c_2)s_1$.

Let $\overline{HC}_r$  be the $\kappa$-superalgebra  generated by the even elements $\bar s_1, \ldots, \bar s_{r-1}$ and odd  elements $\bar c_1, \ldots, \bar c_r$ subject to the relations (for all admissible $i$ and $j$):  \begin{equation}\label{aseradual}\begin{aligned} &  \bar s_i^2=1, \ \ \bar s_i\bar s_{i+1}\bar s_i=\bar s_{i+1}\bar s_i\bar s_{i+1}, \ \ \text{and  $\bar s_i\bar s_j=\bar s_j\bar s_i$, if  $|i-j|>1$, }\\
 & \bar c_i^2=1, \ \ \bar c_i\bar c_j=-\bar c_j\bar c_i, \ \ \text{and $w^{-1} \bar c_i w=\bar c_{(i)w}, \forall w\in \Sigma_r$. }\\ \end{aligned}  \end{equation}
Let $\overline{HC}_r^{\rm aff}$ be the $\kappa$-superalgebra generated  by even elements  $\bar s_1, \ldots, \bar s_{r-1}, \bar {\ob x}_1$ and odd elements $\bar c_1, \ldots, \bar c_r$   subject to \eqref{aseradual} together with the following defining relations (for all admissible $i$ and $j$):
 \begin{equation}\label{aseradual2}
 \bar {\ob x}_1\bar {\ob x}_2=\bar {\ob x}_2\bar {\ob x}_1, \ \  \bar {\ob x}_1\bar c_i=(-1)^{\delta_{i, 1}} \bar c_i\bar {\ob x}_1, \ \ \bar s_j\bar {\ob x}_1=\bar {\ob x}_1\bar s_j, \text{if $j\neq 1$, }
 \end{equation}
 where $\bar {\ob x}_2=\bar s_1\bar {\ob x}_1\bar s_1-(1+\bar c_1\bar c_2)\bar s_1$.

 In \cite[Theorem~5.1]{JK}, Jung and Kang defined   the walled  Brauer-Clifford superalgebra $ BC_{r,t}$. It is  the associative $\kappa$-superalgebra  generated by even generators $e_1$,  $s_1, \ldots, s_{r-1}$, $\bar s_{1}, \ldots, \bar s_{t-1}$, and odd generators  $c_1, \ldots, c_r, \bar c_1, \ldots,  \bar c_{t}$ subject to  \eqref{symm},\eqref{sera} and \eqref{aseradual} together with the following defining relations for all admissible $i, j$:
 \begin{multicols}{2}
\begin{enumerate}
\item [(1)] $e_1 c_1=e_1\bar c_1 $, $c_1 e_1=\bar c_1 e_1 $,
\item  [(2)]  $\bar s_j c_i=c_i \bar s_j$, $ s_i \bar c_j=\bar c_j  s_i$,
\item [(3)] $c_i \bar c_j=-\bar c_j c_i$, $s_i\bar s_j=\bar s_j s_i$,
\item [(4)]  $e_1^2=0$,
\item [(5)] $e_1 s_1 e_1=e_1=e_1\bar s_1 e_1$,
\item [(6)] $s_i e_1=e_1 s_i$,   $\bar s_i e_1=e_1 \bar s_i$, if $i\neq 1$,
\item [(7)]$e_1s_1\bar s_1 e_1 s_1=e_1s_1\bar s_1 e_1 \bar s_1$,
\item [(8)] $s_1 e_1s_1\bar s_1 e_1 =\bar s_1 e_1s_1\bar s_1 e_1$,
\item [(9)] $c_i e_1=e_1 c_i$ and  $\bar c_i e_1=e_1\bar c_i$, if $i\neq 1$,
\item [(10)] $e_1c_1e_1=0=e_1\bar c_1 e_1$.
\end{enumerate}
\end{multicols}

\begin{Defn}\label{awbsa}\cite[Definition~3.1]{GRSS}
 The {\it affine  walled Brauer-Clifford superalgebra}
$ BC_{r,t}^{\text{\rm aff}}$ is the associative $\kappa$-superalgebra generated by odd elements $c_1, \ldots, c_r$, $\bar c_1, \ldots, \bar c_t$ and  even elements  $e_1, \ob x_1, \bar {\ob x}_1$, $s_1, \ldots, s_{r-1}$, $\bar s_1, \ldots, \bar s_{t-1}$,  and two families of even central elements  $\omega_{2k+1}, \bar \omega_{k}$, $ k\in\Z^{\ge1}$ subject to \eqref{symm}--\eqref{aseradual2} and the above relations (1)--(10) together with the following defining relations for all admissible $i$:
\begin{multicols}{2}
\begin{enumerate}
\item [(1)]$e_1(\ob x_1+\bar{\ob x}_1)=(x_1+\bar {\ob x}_1)e_1=0$,
 \item [(2)] $e_1s_1\ob x_1s_1=s_1\ob x_1s_1e_1$,
\item [(3)] $\ob x_1(e_1+\bar{\ob x}_1-\bar e_1 )=(e_1-\bar e_1+\bar {\ob x}_1)x_1$,
\item [(4)] $e_1\bar s_1\bar {\ob x}_1\bar s_1=\bar s_1\bar {\ob x}_1\bar s_1 e_1$,
\item [(5)] $e_1{\ob x}_1^{2k+1} e_1=\omega_{2k+1} e_1$,  $\forall k\in \mathbb N$,
\item [(6)] $e_1\ob x_1^{2k}e_1=0$,  $\forall k\in \mathbb N$,
 \item  [(7)] $e_1\bar {\ob x}_1^{k}e_1=\bar \omega_{k}e_1$, $\forall k\in \mathbb Z^{>0}$,
 \item [(8)] $\ob x_1 \bar c_{i}=\bar c_{i} \ob x_1$,
 \item [(9)] $\bar {\ob x}_1 c_i= c_i \bar {\ob x}_1$,
 \item [(10)] $ \ob x_1 \bar s_i=\bar s_i \ob x_1$,
 \item [(11)] $\bar {\ob x}_1 s_i=s_i \bar {\ob x}_1$.
 \end{enumerate}
\end{multicols}
 \end{Defn}
By \cite[Lemma~3.4, Corollary~3.5]{GRSS},  we have to assume that   $\bar \omega_{k}$'s satisfy some technical conditions which are defined via $\omega_{2i+1}$ for all $i\in \mathbb N$ and moreover,
 $\bar \omega_{2k}=0$ for $k\in\mathbb Z_{\geq 0}$.
Otherwise, $e_1=0$ and  $ BC_{r,t}^{\text{\rm aff}}$ is isomorphic to the outer  tensor product of $HC^{\rm aff}_r$ and $\overline{HC}^{\rm aff}_t$ whenever $\kappa$ is a field.

Let $\AOBC_\kappa(\delta)$ and $\OBC_\kappa^f(\delta)$ be the category obtained from
$\AOBC_\kappa $ and $\OBC_\kappa^f$ by specializing $\Delta_k$ at $\delta_k$, where
$\delta=(\delta_k)_{k\in\mathbb Z_{\geq0}}$.
It is proven   in \cite{CK, GRSS} that there is
 a $\kappa$-superalgebra homomorphism
 \begin{equation} \label{isomofoabcaff} \varphi: {BC}_{r,t}^{\rm aff}\longrightarrow \End_{\AOBC_\kappa(\delta)}(\downarrow^t\uparrow^r).   \end{equation}
 Since we are going to use the same notation in  \cite{CK},  we use their  homomorphism $ \varphi$ in \eqref{isomofoabcaff}. By \cite[A.4]{CK}, $\varphi$ is defined by
$$\begin{aligned}
    c_i & \mapsto\sqrt{-1}~
        \begin{tikzpicture}[baseline = 7.5pt, scale=0.5, color=\clr]
            \draw[<-,thick] (0,0) to[out=up, in=down] (0,1.5);
        \end{tikzpicture}^{~s ~}
        \begin{tikzpicture}[baseline = 7.5pt, scale=0.5, color=\clr]
            \draw[->,thick] (0,0) to[out=up, in=down] (0,1.5);
        \end{tikzpicture}^{~r-i}
        \begin{tikzpicture}[baseline = 7.5pt, scale=0.5, color=\clr]
            \draw[->,thick] (0,0) to[out=up, in=down] (0,1.5);
            \draw (0,0.7) \wdot;
        \end{tikzpicture} \;\;
        \begin{tikzpicture}[baseline = 7.5pt, scale=0.5, color=\clr]
            \draw[->,thick] (0,0) to[out=up, in=down] (0,1.5);
        \end{tikzpicture}^{~i-1} &
    \bar{c}_j &\mapsto\sqrt{-1}~
        \begin{tikzpicture}[baseline = 7.5pt, scale=0.5, color=\clr]
            \draw[<-,thick] (0,0) to[out=up, in=down] (0,1.5);
        \end{tikzpicture}^{~s-j}
        \begin{tikzpicture}[baseline = 7.5pt, scale=0.5, color=\clr]
            \draw[<-,thick] (0,0) to[out=up, in=down] (0,1.5);
            \draw (0,0.7) \wdot;
        \end{tikzpicture} \;\;
        \begin{tikzpicture}[baseline = 7.5pt, scale=0.5, color=\clr]
            \draw[<-,thick] (0,0) to[out=up, in=down] (0,1.5);
        \end{tikzpicture}^{~j-1~}
        \begin{tikzpicture}[baseline = 7.5pt, scale=0.5, color=\clr]
            \draw[->,thick] (0,0) to[out=up, in=down] (0,1.5);
        \end{tikzpicture}^{~r} \\
    e_1 & \mapsto
        \begin{tikzpicture}[baseline = 11pt, scale=0.5, color=\clr]
            \draw[<-,thick] (0,0) to[out=up, in=down] (0,2);
        \end{tikzpicture}^{~s-1}
        \begin{tikzpicture}[baseline = 11pt, scale=0.5, color=\clr]
            \draw[<-,thick] (0,0) to[out=up,in=left] (2,0.75) to[out=right,in=up] (4,0);
            \draw[->,thick] (0,2) to[out=down,in=left] (2,1.25) to[out=right,in=down] (4,2);
            \draw[->,thick] (2,0) to[out=up, in=down] (2,2);
            \draw[color=black] (2.75,1.9) node{$~^{r-1}$};
        \end{tikzpicture}
        \\
   \ob x_1 & \mapsto -
        \begin{tikzpicture}[baseline = 7.5pt, scale=0.5, color=\clr]
            \draw[<-,thick] (0,0) to[out=up, in=down] (0,1.5);
        \end{tikzpicture}^{~s~}
        \begin{tikzpicture}[baseline = 7.5pt, scale=0.5, color=\clr]
            \draw[->,thick] (0,0) to[out=up, in=down] (0,1.5);
        \end{tikzpicture}^{~r-1}
        \begin{tikzpicture}[baseline = 7.5pt, scale=0.5, color=\clr]
            \draw[->,thick] (0,0) to[out=up, in=down] (0,1.5);
            \draw (0,0.7) \bdot;
        \end{tikzpicture}
        &
    \bar{\ob x}_1 &\mapsto
        \begin{tikzpicture}[baseline = 11pt, scale=0.5, color=\clr]
            \draw[<-,thick] (0,0) to[out=up, in=down] (0,2);
        \end{tikzpicture}^{~s-1}
        \begin{tikzpicture}[baseline = 11pt, scale=0.5, color=\clr]
            \draw[->,thick] (0,2) to[out=down, in=up] (1.5,1) to[out=down,in=up] (0,0);
            \draw[->,thick] (0.75,0) to (0.75,2);
            \draw[color=black] (1,1.9) node{$~^{r}$};
            \draw (1.5,1) \bdot;
        \end{tikzpicture}
        \\
    s_i & \mapsto
        \begin{tikzpicture}[baseline = 7.5pt, scale=0.5, color=\clr]
            \draw[<-,thick] (0,0) to[out=up, in=down] (0,1.5);
        \end{tikzpicture}^{~s~}
        \begin{tikzpicture}[baseline = 7.5pt, scale=0.5, color=\clr]
            \draw[->,thick] (0,0) to[out=up, in=down] (0,1.5);
        \end{tikzpicture}^{~r-i-1}
        \begin{tikzpicture}[baseline = 7.5pt, scale=0.5, color=\clr]
            \draw[->,thick] (0,0) to[out=up, in=down] (1,1.5);
            \draw[->,thick] (1,0) to[out=up, in=down] (0,1.5);
        \end{tikzpicture} \;\;
        \begin{tikzpicture}[baseline = 7.5pt, scale=0.5, color=\clr]
            \draw[->,thick] (0,0) to[out=up, in=down] (0,1.5);
        \end{tikzpicture}^{~i-1} &
    \bar{s}_j &\mapsto
        \begin{tikzpicture}[baseline = 7.5pt, scale=0.5, color=\clr]
            \draw[<-,thick] (0,0) to[out=up, in=down] (0,1.5);
        \end{tikzpicture}^{~s-j-1}
        \begin{tikzpicture}[baseline = 7.5pt, scale=0.5, color=\clr]
            \draw[<-,thick] (0,0) to[out=up, in=down] (1,1.5);
            \draw[<-,thick] (1,0) to[out=up, in=down] (0,1.5);
        \end{tikzpicture} \;\;
        \begin{tikzpicture}[baseline = 7.5pt, scale=0.5, color=\clr]
            \draw[<-,thick] (0,0) to[out=up, in=down] (0,1.5);
        \end{tikzpicture}^{~j-1~}
        \begin{tikzpicture}[baseline = 7.5pt, scale=0.5, color=\clr]
            \draw[->,thick] (0,0) to[out=up, in=down] (0,1.5);
        \end{tikzpicture}^{~r~} \\
 \omega_{2k+1} & \mapsto -\delta_{2k+1} &
   \bar{\omega}_k &\mapsto \begin{cases}    \delta'_k, &\text{if $k$ is odd;}\\
                                        0, &\text{if $k$ is even.}
                       \end{cases}\\
\end{aligned}
$$
where $\delta_k'$'s $\in \kappa$ are   determined by  \begin{equation}\label{del'}\delta_k'-\delta_k= -\sum_{0< i< k/2}\delta_{2i-1}\delta_{k-2i}'.\end{equation}
Via the previous homomorphism, we see that  \eqref{del'} are the same as those relations in \cite[Corollary~3.5]{GRSS}.
Let $ {BC}_{r,t}^{\rm aff}(\delta)$ be  the affine walled Brauer-Clifford superalgebra obtained from $ {BC}_{r,t}^{\rm aff}$ by specializing    central generators $ \omega_{2k+1}$ and $\bar \omega_{2k+1} (k\in\mathbb Z_{>0})$ at $-\delta_{2k+1}$  and $\delta'_k $ in $\kappa$. Then $\varphi$ factors through ${BC}_{r,t}^{\rm aff}(\delta)$. The basis theorem of $ \End_{\AOBC_\kappa(\delta) }(\downarrow^t\uparrow^r)$  in \cite{CK} yields the following
 $\kappa$-superalgebra isomorphism induced from $\varphi$:
 \begin{equation} \label{isomofoabc} \bar\varphi: {BC}_{r,t}^{\rm aff}(\delta)\longrightarrow {\mathscr {BC}}_{r, t}^{\rm aff},   \end{equation}
where $ {\mathscr {BC}}_{r, t}^{\rm aff}:=\End_{\AOBC_\kappa(\delta) }(\downarrow^t\uparrow^r)$.
In particular, $\bar\varphi$ sends each  \emph{regular monomial} of  ${BC}_{r,t}^{\rm aff}(\delta)$ in \cite[Definition~3.15]{GRSS} to a unique  equivalence class of normally ordered dotted oriented  Brauer-Clifford diagram (without bubbles).
The tiny difference  between the isomorphisms established in \cite{CK, GRSS} is that we  number the leftmost strand as the first one in \cite{GRSS} while Comes and Kujawa number the rightmost as the first strand (see, e.g, \eqref{xlabel}). Moreover, we use right tensor ideal in \cite{GRSS} while they use left tensor ideal for the definition of $\OBC_\kappa^f$.

\begin{Defn}
Define two linear  $\kappa$-linear homomorphisms  $\sigma_\uparrow: \End_{\AOBC_\kappa}(\uparrow)\rightarrow\End_{\AOBC_\kappa}(\downarrow)$ and
$\sigma_\downarrow: \End_{\AOBC_\kappa}(\downarrow)\rightarrow\End_{\AOBC_\kappa}(\uparrow)$ such that, for any
$h_1\in \End_{\AOBC_\kappa}(\uparrow)$ and $ h_2\in \End_{\AOBC_\kappa}(\downarrow)$,
$$\begin{aligned} & \sigma_\uparrow(h_1):= (\downarrow\lcap)\circ (\dswap h_1)\circ ( \downarrow \rcup)= \begin{tikzpicture}[baseline = 15pt, scale=0.5, color=\clr]
        \draw[<-,thick] (0,0) to (0,0.5) to(1,1);
        \draw[-,thick] (2,1) to[out=up, in=right] (1.53,1.5) to[out=left, in=right] (1.47,1.5);
        \draw[-,thick] (1.49,1.5) to[out=left,in=up] (1,1);
          \draw[-,thick]  (2,1)to(2,0.7);
             \draw[-,thick] (2,0.7) to[out=down, in=right] (1.53,0.2) to[out=left, in=right] (1.47,0.2);
        \draw[-,thick] (1.49,0.2) to[out=left,in=down] (1,0.7);
        \draw[-,thick]   (1,0.7)to(0,1)to(0,2);
       \draw (2,0.8) node[circle,draw,thick,fill=white,inner sep=0pt, minimum width=10pt]{\tiny$h_1$};
    \end{tikzpicture},\\
    & \sigma_\downarrow(h_2):= (\uparrow\lcap)\circ (\swap h_2)\circ ( \uparrow \rcup)=  \begin{tikzpicture}[baseline = 15pt, scale=0.5, color=\clr]
        \draw[-,thick] (0,0) to (0,0.5) to(1,1);
        \draw[-,thick] (2,1) to[out=up, in=right] (1.53,1.5) to[out=left, in=right] (1.47,1.5);
        \draw[-,thick] (1.49,1.5) to[out=left,in=up] (1,1);
          \draw[-,thick]  (2,1)to(2,0.7);
             \draw[-,thick] (2,0.7) to[out=down, in=right] (1.53,0.2) to[out=left, in=right] (1.47,0.2);
        \draw[-,thick] (1.49,0.2) to[out=left,in=down] (1,0.7);
        \draw[->,thick]   (1,0.7)to(0,1)to(0,2);
        \draw (2,0.8) node[circle,draw,thick,fill=white,inner sep=0pt, minimum width=10pt]{\tiny$h_2$};
    \end{tikzpicture}.\\ \end{aligned}$$

\end{Defn}
\begin{Lemma}\label{sigmaupd}
$\sigma_\uparrow$  and $\sigma_\downarrow$ are mutually inverse to each other.
\end{Lemma}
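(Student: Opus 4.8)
The plan is to verify $\sigma_\downarrow\circ\sigma_\uparrow=\mathbf 1_{\End_{\AOBC_\kappa}(\up)}$ and $\sigma_\uparrow\circ\sigma_\downarrow=\mathbf 1_{\End_{\AOBC_\kappa}(\down)}$ by a diagrammatic computation. The two identities are interchanged by swapping the roles of $\up$ and $\down$ everywhere, so it suffices to treat $\sigma_\downarrow\circ\sigma_\uparrow$. Fix an arbitrary $h\in\End_{\AOBC_\kappa}(\up)$ and substitute the definitions: $\sigma_\downarrow(\sigma_\uparrow(h))$ is then a single string diagram in which the box $h$ is wrapped in two successive ``rotation'' layers, the inner one (from $\sigma_\uparrow$) consisting of a cup, the crossing $\dswap$ and a cap, the outer one (from $\sigma_\downarrow$) consisting of a cup, the crossing $\swap$ and a cap. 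Apart from the box $h$, every ingredient of this picture -- the strands, the two cups, the two caps and the two crossings -- is an even morphism lying in the plain oriented Brauer category $\OB_\kappa$, and the whole diagram has the form $g_2\circ(\mathbf 1_{\ob a}\otimes h\otimes\mathbf 1_{\ob b})\circ g_1$ for suitable morphisms $g_1,g_2$ of $\OB_\kappa$ and words $\ob a,\ob b$.

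First I would move the box $h$ to a convenient straight portion of its strand. Since every cup, cap and crossing is even and $h$ is the only possibly-odd constituent of the diagram, the super-interchange law produces no sign when $h$ is slid vertically past any of these, whatever the parity of $h$; thus the position of $h$ inside the diagram is immaterial. With $h$ moved out of the way, the surrounding configuration of the two cups, the two caps and the two crossings is a diagram of $\OB_\kappa$ joining a single $\up$-strand to itself, and a short sequence of applications of the involutivity of crossings from \eqref{OB relations 1 (symmetric group)} and the zigzag relations \eqref{OB relations 2 (zigzags and invertibility)} -- together with their rotated $\down$-coloured forms, which hold in $\OB_\kappa$ by \cite{BCNR} -- collapses it to the plain identity strand. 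Tracking $h$ through these moves shows it ends up on the straight $\up$-strand with its original orientation and with no crossing left on it, whence $\sigma_\downarrow(\sigma_\uparrow(h))=h$. Interchanging $\up$ and $\down$ throughout gives $\sigma_\uparrow(\sigma_\downarrow(h'))=h'$ for every $h'\in\End_{\AOBC_\kappa}(\down)$, and the lemma follows.

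The substance of the argument is bookkeeping, not depth: one must draw the two nested rotation layers carefully, keep the orientation of every strand consistent through each isotopy, and check that after $h$ is moved aside the cups, caps and crossings are arranged so that the relations of $\OB_\kappa$ genuinely apply at each step (in particular that the two crossings come to lie on a common pair of parallel strands so that a rotated form of $\swap\circ\swap=\mathbf 1_{\up\up}$ is available). Once this is set up, the relations \eqref{OB relations 1 (symmetric group)}--\eqref{OB relations 2 (zigzags and invertibility)} do all the work; no relation involving $\xdot$ or $\cldot$ enters, so the lemma in fact already holds with $\OB_\kappa$ in place of $\AOBC_\kappa$.
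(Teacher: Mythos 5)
Your proposal is correct and is essentially the paper's argument spelled out in more detail: the paper's own proof is the one-line observation that the claim follows from the first relation in \eqref{OB relations 1 (symmetric group)} (together, implicitly, with the zigzag relations \eqref{OB relations 2 (zigzags and invertibility)} needed to straighten the composite picture), and your verification -- slide $h$ harmlessly past the even cups, caps and crossings via the super-interchange law, then collapse the surrounding double-rotation configuration inside $\OB_\kappa$ -- is exactly that computation. Your closing remark that only the relations of $\OB_\kappa$ are used, so no $\xdot$- or $\cldot$-relations enter, matches the paper's citation of \eqref{OB relations 1 (symmetric group)} alone.
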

\begin{proof}The result  immediately  follows from the first relation in \eqref{OB relations 1 (symmetric group)}.
\end{proof}

Similarly, we  define $$\begin{aligned} & \sigma_\uparrow(\delta): \End_{\AOBC_\kappa(\delta)}(\uparrow)\rightarrow\End_{\AOBC_\kappa(\delta)}(\downarrow), \\   & \sigma_\downarrow(\delta): \End_{\AOBC_\kappa(\delta)}(\downarrow)\rightarrow\End_{\AOBC_\kappa(\delta)}(\uparrow)\\
\end{aligned}$$
in an obvious way. Recall $f(t)$ is given in \eqref{funcf}.
\begin{Lemma}\label{gtp} There is a
 $g(t)\in \kappa[t]$ such that the equation
 \begin{equation}\label{ind321} (-1)^\ell\bar\varphi(e_1f(\ob x_1))=\bar\varphi(e_1g(\bar{\ob x}_1))\end{equation}
holds  in  $\AOBC_\kappa(\delta)$.
\end{Lemma}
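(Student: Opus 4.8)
The idea is to reduce the stated identity to a purely local statement about how a black dot crosses the cap that appears inside $e_1$, and then to run a short recursion. First I would unwind the images under $\varphi$ (equivalently $\bar\varphi$) recalled just above: using the explicit formula for $\varphi$, the morphism $\bar\varphi(e_1\ob x_1^k)$ is the diagram of $e_1$ with all strands straight except that the inward-pointing ($\up$-)leg of the bottom cap of $e_1$ carries $k$ black dots (this is where the dot of $\ob x_1$ on the rightmost $\up$-strand flows in), while $\bar\varphi(e_1\bar{\ob x}_1^k)$ is the same diagram but with $k$ downward black dots on the other ($\down$-)leg of that same cap (where the downward dot of $\bar{\ob x}_1$ on the $s$-th $\down$-strand flows in). Since the remaining $r-1$ middle $\up$-strands, the $t-1$ left $\down$-strands, and the top cup of $e_1$ play no role, it suffices to prove the corresponding local identity in $\Hom_{\AOBC_\kappa(\delta)}(\down\up,1)$: for a suitable $g\in\kappa[t]$, $(-1)^\ell$ times $\lcap$ with $f(\xdot)$ on its $\up$-leg equals $\lcap$ with $g$ of a downward black dot on its $\down$-leg.

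For the local identity I would use the relations of $\AOBC_\kappa$ governing a black dot passing through a cap, together with the definition \eqref{down white dot} of the downward dot, exactly as recorded in \cite{BCNR,CK}: moving a single black dot from the $\up$-leg of $\lcap$ to its $\down$-leg costs a sign $-1$ and produces correction terms in which a counterclockwise bubble $\Delta_j$ is attached to a cap carrying fewer dots on its $\down$-leg. Iterating this move $k$ times rewrites $\lcap\circ(\down\otimes\xdot^k)$ as $(-1)^k$ times $\lcap$ with $k$ downward dots on the $\down$-leg, plus a combination of $\lcap$'s with strictly fewer than $k$ downward dots whose coefficients become scalars in $\kappa$ once we specialize $\Delta_j=\delta_j$ in $\AOBC_\kappa(\delta)$. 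Applying this to $f(\xdot)=\xdot^{\ell}+\cdots$ and collecting terms produces a well-defined $g\in\kappa[t]$ with $(-1)^\ell\bar\varphi(e_1f(\ob x_1))=\bar\varphi(e_1g(\bar{\ob x}_1))$; the correction terms do not touch the top degree, so $g$ is monic of degree $\ell$ (the factor $(-1)^\ell$ is inserted precisely to make this leading coefficient equal to $1$, which is the form needed in Section~4). Alternatively, the same recursion can be carried out inside ${BC}_{r,t}^{\rm aff}(\delta)$: starting from relation~(1) of Definition~\ref{awbsa} in the form $e_1\ob x_1=-e_1\bar{\ob x}_1$, using relation~(3) to commute $\ob x_1$ past $\bar{\ob x}_1$ modulo $e_1$-terms, and then collapsing $e_1\bar{\ob x}_1^{\,j}e_1$ to a scalar multiple of $e_1$ by relations~(5)--(7), one gets by induction on $k$ that $e_1\ob x_1^k=(-1)^ke_1\bar{\ob x}_1^{\,k}+\sum_{i<k}\gamma_{k,i}\,e_1\bar{\ob x}_1^{\,i}$ with $\gamma_{k,i}\in\kappa$, and applies $\bar\varphi$.

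The main obstacle is the bookkeeping of the bubble corrections: one must verify that each correction term genuinely decreases the number of dots remaining on the $\down$-leg, so that the recursion terminates and leaves the top-degree term intact, and that after the substitution $\Delta_j=\delta_j$ only odd bubbles survive — which is exactly $\Delta_{2k}=0$, recalled after Definition~\ref{normdot} — so that the coefficients $\gamma_{k,i}$ really do lie in $\kappa$. A secondary point requiring care is consistency of conventions: Comes--Kujawa number strands from the right and use a left tensor ideal, whereas \cite{GRSS} does the opposite, so the explicit $\varphi$ recalled before this lemma must be used throughout when translating between $e_1f(\ob x_1)$, $e_1g(\bar{\ob x}_1)$ and the diagrams they represent.
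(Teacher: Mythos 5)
Your proposal is correct and is essentially the paper's argument: the paper defines $g$ by applying the rotation map $\sigma_\uparrow(\delta)$ to $f(\xdot)$, quoting the closed formula \cite[(3.26)]{CK} for the rotation of a $k$-dotted upward strand (a $k$-dotted downward strand plus bubble corrections $\Delta_{k-i-1}$ on lower-dotted strands, which become scalars $\delta_{k-i-1}$ in $\AOBC_\kappa(\delta)$) instead of rerunning your dot-sliding recursion, and then verifies the global identity \eqref{fgfgfgf} using Lemma~\ref{sigmaupd} (that $\sigma_\uparrow$ and $\sigma_\downarrow$ are mutually inverse) together with \eqref{scup}. One small adjustment to your sign bookkeeping: the diagrammatic slide of a dot across the cap carries no sign in \cite[(3.26)]{CK}, and the factor $(-1)^\ell$ in the lemma comes entirely from the minus sign in the definition of $\varphi(\ob x_1)$ combined with $f(-t)=(-1)^\ell f(t)$ — though your alternative recursion in $BC_{r,t}^{\rm aff}(\delta)$ starting from $e_1\ob x_1=-e_1\bar{\ob x}_1$ reaches the same monic $g$ of degree $\ell$.
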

\begin{proof}
Write  $f(t)=a_\ell t^\ell+a_{\ell-1}t^{\ell-1}+\ldots+a_1 t+a_0$, where $a_i\in \kappa$ for all admissible $i$.
Define $y_k:= \sigma_\uparrow(\xdotk )$ in $\AOBC_\kappa$ for $1\leq k\leq \ell$.
By \cite[(3.26)]{CK},
 $y_k=\begin{tikzpicture}[baseline = 15pt, scale=0.5, color=\clr]
        \draw[<-,thick] (0,0.5)  to(0,1.8);
  \draw(0,1) \bdot; \draw(0.4,1)node{$k$};     \end{tikzpicture} +\sum_{i=0}^{k-1}\Delta_{k-i-1}
\begin{tikzpicture}[baseline = 15pt, scale=0.5, color=\clr]
        \draw[<-,thick] (0,0.4)  to(0,1.8);
  \draw(0,1.3) \bdot; \draw(0.4,1.3)node{$ i$};     \end{tikzpicture}$.     So,
\begin{equation}\label{ykde}
y_k= \begin{tikzpicture}[baseline = 15pt, scale=0.5, color=\clr]
        \draw[<-,thick] (0,0.5)  to(0,1.8);
  \draw(0,1) \bdot; \draw(0.4,1)node{$k$};     \end{tikzpicture} +\sum_{i=0}^{k-1}\delta_{k-i-1}
\begin{tikzpicture}[baseline = 15pt, scale=0.5, color=\clr]
        \draw[<-,thick] (0,0.4)  to(0,1.8);
  \draw(0,1.3) \bdot; \draw(0.4,1.3)node{$i$};     \end{tikzpicture}
\end{equation} in ${\AOBC_\kappa(\delta)}$.
As $\begin{tikzpicture}[baseline = 15pt, scale=0.5, color=\clr]
        \draw[<-,thick] (0,0.5)  to(0,1.8);
  \draw(0,1) \bdot; \draw(0.4,1)node{$k$};     \end{tikzpicture}=(\xdotr)^k $, $y_k$ can be considered as  a polynomial of $\xdotr$ with degree $k$.
We define \begin{equation}\label{ggg} \mathbf g:=\sigma_\uparrow(\delta)(f(\xdot) )  =
    \begin{tikzpicture}[baseline = 15pt, scale=0.5, color=\clr]
        \draw[<-,thick] (0,0) to (0,0.5) to(1,1);
        \draw[-,thick] (2,1) to[out=up, in=right] (1.53,1.5) to[out=left, in=right] (1.47,1.5);
        \draw[-,thick] (1.49,1.5) to[out=left,in=up] (1,1);
          \draw[-,thick]  (2,1)to(2,0.7);
             \draw[-,thick] (2,0.7) to[out=down, in=right] (1.53,0.2) to[out=left, in=right] (1.47,0.2);
        \draw[-,thick] (1.49,0.2) to[out=left,in=down] (1,0.7);
        \draw[-,thick]   (1,0.7)to(0,1)to(0,2);\draw (2,0.8) node[circle,draw,thick,fill=white,inner sep=0pt, minimum width=10pt]{\tiny$j$};
    \end{tikzpicture} \end{equation}in ${\AOBC_\kappa(\delta)}$,
  where $j=f(\xdot)$.
     So $ \mathbf g= a_\ell y_\ell+a_{\ell-1}y_{\ell-1}+\ldots+a_1 y_1+a_0$, which can be considered as   a polynomial  of $\xdotr$ with degree $\ell$.
Let   $g(t)\in\kappa[t]$ be obtained from $\mathbf g$ by replacing $\xdotr$ by $t$. Then
$g(\xdotr)=\mathbf g$.
 Thanks to Lemma~\ref{sigmaupd}, $\sigma_\downarrow(\delta)(g(\xdotr) )=\sigma_\downarrow(\delta)\circ \sigma_\uparrow(\delta) (f(\xdot) )=f(\xdot)$. Let   $h:=g(\xdotr)$. Then
\begin{equation}\label{fgf}
f(\xdot)= \begin{tikzpicture}[baseline = 15pt, scale=0.5, color=\clr]
        \draw[-,thick] (0,0) to (0,0.5) to(1,1);
        \draw[-,thick] (2,1) to[out=up, in=right] (1.53,1.5) to[out=left, in=right] (1.47,1.5);
        \draw[-,thick] (1.49,1.5) to[out=left,in=up] (1,1);
          \draw[-,thick]  (2,1)to(2,0.7);
             \draw[-,thick] (2,0.7) to[out=down, in=right] (1.53,0.2) to[out=left, in=right] (1.47,0.2);
        \draw[-,thick] (1.49,0.2) to[out=left,in=down] (1,0.7);
        \draw[->,thick]   (1,0.7)to(0,1)to(0,2);\draw (2,0.8) node[circle,draw,thick,fill=white,inner sep=0pt, minimum width=10pt]{\tiny$h$};
    \end{tikzpicture},
\end{equation}
Via  \eqref{OB relations 1 (symmetric group)}--\eqref{OB relations 2 (zigzags and invertibility)} and the definition of $\varphi$,
we have
\begin{equation}\label{bark}
\bar\varphi(\bar {\ob x}_1)^k=  \begin{tikzpicture}[baseline = 11pt, scale=0.5, color=\clr]
            \draw[<-,thick] (0,0) to[out=up, in=down] (0,2);
        \end{tikzpicture}^{~s-1}
        \begin{tikzpicture}[baseline = 11pt, scale=0.5, color=\clr]
            \draw[->,thick] (0,2) to[out=down, in=up] (1.5,1) to[out=down,in=up] (0,0);
            \draw[->,thick] (0.75,0) to (0.75,2);
            \draw[color=black] (1,1.9) node{$~^{r}$};
            \draw (1.5,1) \bdot;\draw(1.9,1)node{$k$};
        \end{tikzpicture} \quad \text { and hence }
        \bar\varphi(g(\bar{\ob x}_1))=\begin{tikzpicture}[baseline = 11pt, scale=0.5, color=\clr]
            \draw[<-,thick] (0,0) to[out=up, in=down] (0,2);
        \end{tikzpicture}^{~s-1}
        \begin{tikzpicture}[baseline = 11pt, scale=0.5, color=\clr]
            \draw[->,thick] (0,2) to[out=down, in=up] (1.5,1) to[out=down,in=up] (0,0);
            \draw[->,thick] (0.75,0) to (0.75,2);
            \draw[color=black] (1,1.9) node{$~^{r}$};\draw (1.5,1) node[circle,draw,thick,fill=white,inner sep=0pt, minimum width=10pt]{\tiny$h$};
        \end{tikzpicture}.
\end{equation}  Therefore, we have
\begin{equation}\label{varx1}
\bar\varphi ( f(\ob x_1))=(-1)^\ell f(\begin{tikzpicture}[baseline = 7.5pt, scale=0.5, color=\clr]
            \draw[<-,thick] (0,0) to[out=up, in=down] (0,1.5);
        \end{tikzpicture}^{~s~}
        \begin{tikzpicture}[baseline = 7.5pt, scale=0.5, color=\clr]
            \draw[->,thick] (0,0) to[out=up, in=down] (0,1.5);
        \end{tikzpicture}^{~r-1~}
        \begin{tikzpicture}[baseline = 7.5pt, scale=0.5, color=\clr]
            \draw[->,thick] (0,0) to[out=up, in=down] (0,1.5);
            \draw (0,0.7) \bdot;
        \end{tikzpicture}),
\end{equation}
In order to prove \eqref{ind321},  by \eqref{bark}--\eqref{varx1}, it is enough to prove that
\begin{equation}\label{fgfgfgf}
\begin{tikzpicture}[baseline = 7.5pt, scale=0.5, color=\clr]
            \draw[<-,thick] (0,0) to[out=up, in=down] (0,2);
        \end{tikzpicture}^{~s-1}
    \begin{tikzpicture}[baseline = 7.5pt, scale=0.5, color=\clr]
            \draw[<-,thick] (1,0) to[out=up, in=down] (1,2);
                    \draw[->,thick] (2,0) to[out=up, in=down] (2,2);\draw[color=black] (2.4,1.9) node{$~^{~r-1}$};
                       \draw[-,thick] (3.5,0) to[out=up, in=down] (3.5,2);
                       \draw[-,thick] (1,2) to[out=up,in=left] (2,2.75) to[out=right,in=up] (3.5,2);
            \draw (3.5,0.7) node[circle,draw,thick,fill=white,inner sep=0pt, minimum width=10pt]{\tiny$j$};
        \end{tikzpicture}=
        \begin{tikzpicture}[baseline = 11pt, scale=0.5, color=\clr]
            \draw[<-,thick] (0,0) to[out=up, in=down] (0,2);
        \end{tikzpicture}^{~s-1}
        \begin{tikzpicture}[baseline = 11pt, scale=0.5, color=\clr]
            \draw[->,thick] (0,2) to[out=down, in=up] (2,1) to[out=down,in=up] (0,0);
            \draw[->,thick] (0.75,0) to (0.75,2);
            \draw[color=black] (1,1.9) node{$~^{~r}$};
            \draw[-,thick] (1.5,0) to (1.5,2);
            \draw[-,thick] (0,2) to[out=up,in=left] (1,2.75) to[out=right,in=up] (1.5,2);
            \draw (2,1) node[circle,draw,thick,fill=white,inner sep=0pt, minimum width=10pt]{\tiny$h$};
        \end{tikzpicture}
\end{equation}
where $j=f(\xdot)$ and $h=g(\xdotr)$.
By \eqref{fgf}, we have
\begin{equation}\label{recupx}
\begin{tikzpicture}[baseline = 7.5pt, scale=0.5, color=\clr]
            \draw[<-,thick] (0,0) to[out=up, in=down] (0,2);
        \end{tikzpicture}^{~s-1}
    \begin{tikzpicture}[baseline = 7.5pt, scale=0.5, color=\clr]
            \draw[<-,thick] (1,0) to[out=up, in=down] (1,2);
                    \draw[->,thick] (2,0) to[out=up, in=down] (2,2);\draw[color=black] (2.4,1.9) node{$~^{~r-1}$};
                       \draw[-,thick] (3.5,0) to[out=up, in=down] (3.5,2);
                       \draw[-,thick] (1,2) to[out=up,in=left] (2,2.75) to[out=right,in=up] (3.5,2);
            \draw (3.5,0.7) node[circle,draw,thick,fill=white,inner sep=0pt, minimum width=10pt]{\tiny$j$};
        \end{tikzpicture}=\begin{tikzpicture}[baseline = 7.5pt, scale=0.5, color=\clr]
            \draw[<-,thick] (0,0) to[out=up, in=down] (0,2);
        \end{tikzpicture}^{~s-1}
    \begin{tikzpicture}[baseline = 7.5pt, scale=0.5, color=\clr]
            \draw[<-,thick] (1,0) to[out=up, in=down] (1,2);
                    \draw[->,thick] (2,0) to[out=up, in=down] (2,2);\draw[color=black] (2.4,1.9) node{$~^{~r-1}$};
                       \draw[-,thick] (1,2) to[out=up,in=left] (2,2.75) to[out=right,in=up] (3.5,2);
           \draw[-,thick] (3.5,0) to (3.5,0.5) to(4.5,1);
        \draw[-,thick] (5.5,1) to[out=up, in=right] (5.03,1.5) to[out=left, in=right] (4.97,1.5);
        \draw[-,thick] (4.99,1.5) to[out=left,in=up] (4.5,1);
          \draw[-,thick]  (5.5,1)to(5.5,0.7);
             \draw[-,thick] (5.5,0.7) to[out=down, in=right] (5.03,0.2) to[out=left, in=right] (4.97,0.2);
        \draw[-,thick] (4.99,0.2) to[out=left,in=down] (4.5,0.7);
        \draw[-,thick]   (4.5,0.7)to(3.5,1)to(3.5,2);
            \draw (5.5,1) node[circle,draw,thick,fill=white,inner sep=0pt, minimum width=10pt]{\tiny$h$};
        \end{tikzpicture}
\end{equation}
 It follows from \cite[(3.4)]{CK} that
  \begin{equation}\label{scup}
  \begin{tikzpicture}[baseline = 5pt, scale=0.5, color=\clr]
        \draw[<-,thick] (0,0) to[out=up,in=left] (0.5,0.65) to[out=right,in=up] (1,0);
    \end{tikzpicture}
    ~=~
    \begin{tikzpicture}[baseline = 5pt, scale=0.5, color=\clr]
        \draw[<-,thick] (0,0) to[out=up,in=down] (1,1) to[out=up,in=right] (0.5,1.5) to[out=left,in=up] (0,1) to[out=down,in=up] (1,0);
    \end{tikzpicture}\end{equation}
Now \eqref{fgfgfgf} follows from \eqref{recupx}--\eqref{scup}.
\end{proof}

\begin{Lemma}\label{form} Suppose $\kappa$ is an algebraically closed field with characteristic not two.
Let $g(t)\in \kappa[t]$ be defined in the proof of Lemma~\ref{gtp}. Then there are two non-negative integers $a_1, b_1$ and some non-zero scalars $\bar u_i$ in $\kappa$, $1\le i\le b_1$  such that $a_1+2b_1=\ell$ and \begin{equation}\label{reform} g(\bar {\ob x}_1)=\bar x_1^{a_1}\prod_{j=1}^{b_1}(\bar {\ob x}_1^2-\bar u_j).\end{equation}
\end{Lemma}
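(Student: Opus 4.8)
The plan is to reduce the claim to an elementary statement about the polynomial $g(t)\in\kappa[t]$ constructed in the proof of Lemma~\ref{gtp}: namely that $g$ is monic of degree $\ell$ and is supported only on monomials $t^i$ with $i\equiv\ell\pmod 2$. Granting this, the factorization \eqref{reform} over an algebraically closed field is immediate.

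First I would make $g$ explicit. Writing $f(t)=\sum_{k=0}^{\ell}a_kt^k$, so that $a_\ell=1$ since $f$ is monic of degree $\ell$, the identity $\mathbf g=a_\ell y_\ell+a_{\ell-1}y_{\ell-1}+\cdots+a_1y_1+a_0$ from the proof of Lemma~\ref{gtp} together with \eqref{ykde} (after replacing the down-dotted strand by the variable $t$, as in that proof) gives
\[
g(t)=\sum_{k=0}^{\ell}a_k\Bigl(t^{k}+\sum_{i=0}^{k-1}\delta_{k-i-1}\,t^{i}\Bigr).
\]
Since $f(t)=t^{2a+\epsilon}\prod_{i=1}^{b}(t^2-u_i)$ satisfies $f(-t)=(-1)^{\ell}f(t)$, we have $a_k=0$ unless $k\equiv\ell\pmod 2$. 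On the other hand $\delta_{2j}=0$ for every $j\ge 0$ (the specialization of the identity $\Delta_{2j}=0$, which already holds in $\AOBC_\kappa$), so in the inner sum the index $i$ contributes only when $k-i-1$ is odd, that is, when $i\equiv k\pmod 2$. Hence every monomial occurring in the $k$-th summand has exponent $\equiv k\equiv\ell\pmod 2$, and, since the leading term is $a_\ell t^\ell=t^\ell$, the polynomial $g$ is monic of degree $\ell$ and supported on exponents of the parity of $\ell$.

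Next I would extract the factorization. Let $a_1\ge 0$ be the smallest exponent occurring in $g$; then $a_1\equiv\ell\pmod 2$ and, by the previous step, $g(t)=t^{a_1}h(t^2)$ for a unique $h\in\kappa[s]$ with $h(0)\ne 0$. Since $g$ is monic of degree $\ell$, the polynomial $h$ is monic of degree $b_1:=(\ell-a_1)/2\ge 0$, whence $a_1+2b_1=\ell$. As $\kappa$ is algebraically closed, $h(s)=\prod_{j=1}^{b_1}(s-\bar u_j)$ with $\bar u_j\in\kappa$, and $h(0)=\prod_{j=1}^{b_1}(-\bar u_j)\ne 0$ forces each $\bar u_j\ne 0$. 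Substituting $s=\bar{\ob x}_1^{2}$ in the identity $g(t)=t^{a_1}h(t^2)$ then yields $g(\bar{\ob x}_1)=\bar{\ob x}_1^{\,a_1}\prod_{j=1}^{b_1}(\bar{\ob x}_1^{2}-\bar u_j)$, as required.

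I do not expect a genuine obstacle here: the argument is bookkeeping of parities applied to the explicit shape of $g$ coming from \eqref{ykde}. The two points needing care are that the correction terms $\sum_i\delta_{k-i-1}t^i$ do not break the parity of $g$, which works precisely because all even-indexed $\delta$'s vanish, and that monicity of $g$ is what turns this parity statement into the exact degree count $a_1+2b_1=\ell$. Algebraic closedness of $\kappa$ is used only to split $h$, and the assumption that $\kappa$ has characteristic $\ne 2$ only through the standing hypothesis $2^{-1}\in\kappa$ underlying $\Delta_{2j}=0$.
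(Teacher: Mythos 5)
Your argument is correct, but it proves the key parity property of $g$ by a different mechanism than the paper does. The paper never computes the coefficients of $g$; instead it first writes $g(t)=t^{a_1}\prod_{j=1}^{c}(t-\bar v_j)$ over the algebraically closed field, then uses that $\bar\varphi$ is an isomorphism to transport \eqref{ind321} into the relation $e_1f(\ob x_1)=(-1)^\ell e_1g(\bar{\ob x}_1)$ in $BC_{r,t}^{\rm aff}(\delta)$, invokes \cite[Lemma~6.2]{GRSS} to deduce $\bar c_1\, g(\bar{\ob x}_1)=(-1)^{a+\epsilon}g(\bar{\ob x}_1)\,\bar c_1$, and finally uses \cite[Theorem~5.15]{GRSS} (that $\kappa[\bar{\ob x}_1]$ is a genuine polynomial ring, hence a UFD) to conclude that the nonzero roots pair up as $\pm\bar v_j$. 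You instead read the parity of $g$ directly off \eqref{ykde}: $g(t)=\sum_k a_k\bigl(t^k+\sum_{i<k}\delta_{k-i-1}t^i\bigr)$, with $a_k=0$ unless $k\equiv\ell\pmod 2$ because $f(-t)=(-1)^\ell f(t)$, and with the correction terms respecting this parity because $\delta_{2j}=0$ for all $j\ge 0$ (which does include $\delta_0$, as $\Delta_0=0$ in $\AOBC_\kappa$ by \cite[Proposition~3.12]{CK} — this is the one point your argument silently leans on, and it is covered by the paper's standing assertion). Your route is more elementary and self-contained: it avoids the isomorphism $\bar\varphi$ and both external results from \cite{GRSS}, and it makes the monicity and exact degree count $a_1+2b_1=\ell$ transparent, where the paper only asserts ``Note that $g(t)$ is a monic polynomial.'' What the paper's route buys is the intermediate superalgebra relation $\bar c_1\,g(\bar{\ob x}_1)=\pm g(\bar{\ob x}_1)\,\bar c_1$, which is the conceptual source of the parity constraint and is reused in spirit when defining $BC^f_{r,t}$ in \eqref{llc}. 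Both proofs use algebraic closedness only to split the degree-$b_1$ factor, so the two are genuinely interchangeable here.
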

\begin{proof}Since we are assuming that $\kappa$ is an algebraically closed field, we can write
$g(t)=t^{a_1}\prod_{j=1}^{c}(t-\bar v_j)$ such that $a_1+c=\ell$ and $\bar v_j\neq0$.
 \emph{This is the place where we have to assume that $\kappa$ is an algebraically closed field}.
 By Lemma~\ref{gtp} and the fact that $\bar \varphi$ is an isomorphism, we have
\begin{equation}\label{equatifg}
e_1f(\ob x_1))=(-1)^\ell e_1g(\bar{\ob x}_1)
\end{equation}
in ${BC}_{r,t}^{\rm aff}(\delta)$. Thanks to  \cite[Lemma~6.2]{GRSS}, we have  $\bar c_1 g(\bar{\ob x}_1)= (-1)^{a+\epsilon} g(\bar {\ob x}_1) \bar c_1$ if \eqref{equatifg} holds, where  $a$ and $\epsilon$ can be found in the definition of $f(t)$.
 By~\cite[Theorem~5.15]{GRSS}, $\kappa[\bar {\ob x}_1]$ can be considered as a ring of  polynomials in variable $\bar {\ob x}_1$. Thus  $\kappa[\bar {\ob x}_1]$ is a unique factorization domain. This means that
 $c$ is even and $\bar {\ob x}_1-\bar v_j, \bar {\ob x}_1+\bar v_j$ appear in $g(\bar {\ob x}_1)$, simultaneously. Note that $g(t)$ is a monic polynomial in variable $t$,   we have
$$g(\bar {\ob x}_1)=\bar {\ob x}_1^{a_1}\prod_{j=1}^{b_1}(\bar {\ob x}_1^2-\bar u_j)$$
for  some scalars $\bar u_i$'s in $\kappa$, where $b_1=c/2$ such that $a_1+2b_1=\ell$.\end{proof}

Thanks to  \eqref{funcf}, \eqref{reform}--\eqref{equatifg}, we can define
\begin{equation}\label{llc}  BC_{r, t}^{f}:=BC^{\rm aff}_{r, t}(\delta)/I,\end{equation} where
$I$ is the two-sided super
ideal generated by $f(\ob x_1)$ and $g(\bar {\ob x}_1)$. This is the same as the  level $\ell$ walled Brauer-Clifford superalgebra $BC_{\ell,r, t}$ defined in \cite[Definition~3.14]{GRSS}.

\begin{Theorem}\label{scycido} Suppose that $\kappa$ is an algebraically closed field with characteristic not two.
As $\kappa$-superalgebras,
$BC_{r, t}^{f} \cong  \End_{\OBC_\kappa^f(\delta)}(\downarrow^t\uparrow^r)$.
\end{Theorem}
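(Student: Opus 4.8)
The plan is to realize both superalgebras as quotients of the affine endomorphism algebra and then to match the two defining ideals. Recall $\mathscr{BC}_{r,t}^{\mathrm{aff}}=\End_{\AOBC_\kappa(\delta)}(\downarrow^t\uparrow^r)$, and that by \eqref{isomofoabc} the map $\bar\varphi\colon{BC}_{r,t}^{\mathrm{aff}}(\delta)\to\mathscr{BC}_{r,t}^{\mathrm{aff}}$ is an isomorphism of superalgebras. Since $\OBC_\kappa^f(\delta)$ is the quotient of $\AOBC_\kappa(\delta)$ by the left tensor ideal $I$ generated by $f(\xdot)\colon\uparrow\to\uparrow$, the general fact that Hom-spaces pass to quotients by such ideals gives $\End_{\OBC_\kappa^f(\delta)}(\downarrow^t\uparrow^r)=\mathscr{BC}_{r,t}^{\mathrm{aff}}/I(\downarrow^t\uparrow^r,\downarrow^t\uparrow^r)$, where $I(\downarrow^t\uparrow^r,\downarrow^t\uparrow^r)$ denotes the subspace of endomorphisms of $\downarrow^t\uparrow^r$ lying in $I$. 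Transporting along $\bar\varphi$, it therefore suffices to show that $\bar\varphi$ carries the two-sided super ideal $J:=\langle f(\ob x_1),g(\bar{\ob x}_1)\rangle$ of ${BC}_{r,t}^{\mathrm{aff}}(\delta)$ appearing in \eqref{llc} isomorphically onto $I(\downarrow^t\uparrow^r,\downarrow^t\uparrow^r)$.

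The first task is the inclusion $\bar\varphi(J)\subseteq I(\downarrow^t\uparrow^r,\downarrow^t\uparrow^r)$. For the generator $f(\ob x_1)$ this is immediate from \eqref{varx1}: $\bar\varphi(f(\ob x_1))$ is, up to a sign, the diagram obtained by placing $f(\xdot)$ on the rightmost strand of $\downarrow^t\uparrow^r$, which by definition lies in the left tensor ideal generated by $f(\xdot)$. For $g(\bar{\ob x}_1)$ I would use the explicit construction of $g$ in the proof of Lemma~\ref{gtp}: there $g(\xdotr)=\sigma_\uparrow(\delta)\big(f(\xdot)\big)$, so $g(\xdotr)$ is already a composite of $f(\xdot)$ with cups, caps and a crossing, hence lies in that left tensor ideal; combining with \eqref{bark} and \eqref{fgf}, the morphism $\bar\varphi(g(\bar{\ob x}_1))$ is obtained from $g(\xdotr)$ by tensoring with identity strands and composing with a single cup--cap pair, and so it too lies in $I(\downarrow^t\uparrow^r,\downarrow^t\uparrow^r)$. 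Since $\bar\varphi$ is a superalgebra homomorphism, it follows that $\bar\varphi(J)\subseteq I(\downarrow^t\uparrow^r,\downarrow^t\uparrow^r)$, and hence $\bar\varphi$ descends to a surjective homomorphism
$$\Phi\colon BC_{r,t}^{f}={BC}_{r,t}^{\mathrm{aff}}(\delta)/J\ \longrightarrow\ \mathscr{BC}_{r,t}^{\mathrm{aff}}/I(\downarrow^t\uparrow^r,\downarrow^t\uparrow^r)=\End_{\OBC_\kappa^f(\delta)}(\downarrow^t\uparrow^r).$$

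To conclude, I would show $\Phi$ is injective by comparing a spanning set of the source with a basis of the target. By the discussion following \eqref{isomofoabc}, $\bar\varphi$ identifies the regular monomials of ${BC}_{r,t}^{\mathrm{aff}}(\delta)$ (in the sense of \cite[Definition~3.15]{GRSS}) with the equivalence classes of normally ordered dotted oriented Brauer--Clifford diagrams without bubbles of type $\downarrow^t\uparrow^r$; under this identification the regular monomials surviving in $BC_{r,t}^{f}=BC_{\ell,r,t}$ — those in which each generator $\ob x_\bullet$ and $\bar{\ob x}_\bullet$ occurs fewer than $\ell$ times, as $f$ and $g$ have degree $\ell$ — correspond exactly to the normally ordered diagrams without bubbles of type $\downarrow^t\uparrow^r$ having fewer than $\ell$ $\fulldot$'s on each strand. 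By \cite{GRSS} the images of these regular monomials span $BC_{r,t}^{f}$, while by Theorem~\ref{Cyclotomic basis conjecture1} applied to $\OBC_\kappa^f(\delta)$ (where the bubble generators $\Delta_1,\Delta_3,\dots,\Delta_{2(a+b)-1}$ have been specialized to scalars) the matching normally ordered diagrams form a $\kappa$-basis of $\End_{\OBC_\kappa^f(\delta)}(\downarrow^t\uparrow^r)$. Thus $\Phi$ sends a spanning set of $BC_{r,t}^{f}$ bijectively onto a basis of the target; a surjection with this property is an isomorphism, which proves the theorem (and, as a by-product, upgrades the GRSS spanning set to a basis of $BC_{\ell,r,t}$).

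The main obstacle is the containment $\bar\varphi(g(\bar{\ob x}_1))\in I(\downarrow^t\uparrow^r,\downarrow^t\uparrow^r)$: one must know that the cyclotomic relation imposed on the $\bar{\ob x}_1$-side is \emph{forced} by killing $f(\xdot)$ on the $\ob x_1$-side, together with the precise shape of the resulting polynomial $g$. This is exactly the content of Lemmas~\ref{gtp} and \ref{form}; the latter is where algebraic closedness of $\kappa$ is essential, in order to factor $g$ in the form \eqref{reform} and thereby recognize $J$ as the ideal defining $BC_{\ell,r,t}$ in the sense of \cite[Definition~3.14]{GRSS}. Once these diagrammatic identities are in place, the remaining bookkeeping — matching the degree-$\ell$ truncations on the two sides and invoking the spanning result for $BC_{\ell,r,t}$ — is routine given Theorem~\ref{Cyclotomic basis conjecture1}.
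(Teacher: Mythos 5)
Your proposal is correct and follows essentially the same route as the paper: pass to the quotient by the left tensor ideal, check that $\bar\varphi$ kills $f(\ob x_1)$ directly and kills $g(\bar{\ob x}_1)$ because $g(\xdotr)=\sigma_\uparrow(\delta)(f(\xdot))=\mathbf g$ vanishes in $\OBC_\kappa^f(\delta)$, obtain the induced surjection onto $\End_{\OBC_\kappa^f(\delta)}(\downarrow^t\uparrow^r)$, and conclude injectivity by matching the regular-monomial spanning set of $BC_{r,t}^{f}$ with the normally ordered diagram basis supplied by Theorem~\ref{Cyclotomic basis conjecture1}. The paper's proof is the same argument phrased via the composite homomorphism $\bar\varphi^f$ rather than via an explicit comparison of ideals, and it likewise notes that the argument upgrades the GRSS spanning set to a basis as a by-product.
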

\begin{proof} Since $\OBC_\kappa^f(\delta)$ is a quotient supercategory of $\AOBC_\kappa(\delta)$, we have the canonical epimorphism   from $\End_{\AOBC_\kappa(\delta) }(\downarrow^t\uparrow^r)$ to $\End_{\OBC_\kappa^f(\delta)}(\downarrow^t\uparrow^r)$, which is induced by the quotient superfunctor from   $\AOBC_\kappa(\delta)$ to  $\OBC_\kappa^f(\delta)$ \cite{CK}. Composing $\bar\varphi$ (see \eqref{isomofoabc}) with this epimorphism  yields an epimorphism
\begin{equation}\label{varphif}
\bar\varphi^f: {BC}_{r,t}^{\rm aff}(\delta)\longrightarrow  \End_{\OBC_\kappa^f(\delta)}(\downarrow^t\uparrow^r).
\end{equation}
We have  $f(\xdot)=0$ in $\OBC_\kappa^f(\delta)$. So  $ \bar\varphi^f(f(\ob x_1))=0$ in $\OBC_\kappa^f(\delta)$.
By \eqref{ggg}, we have $g(\xdotr)=\mathbf g=0$ in $\OBC_\kappa^f(\delta)$.
Hence we have  $\bar\varphi^f(g(\bar{\ob x}_1))=0$ in $\OBC_\kappa^f(\delta)$ by \eqref{bark}.
So, $\bar\varphi^f$ factors through $BC_{r, t}^{f}$.
 It results in   an induced surjective superalgebra homomorphism  $\tilde  \varphi^f: BC_{r, t}^{f} \rightarrow  \End_{\OBC_\kappa^f(\delta)}(\downarrow^t\uparrow^r)$. It is easy to check that $\tilde  \varphi^f$ sends a regular monomial of $BC_{r, t}^{f}$ (see \cite[Definition~3.15]{GRSS}) to a normally ordered dotted oriented Brauer-Clifford diagram, and moreover, the images of
two regular monomials are not equivalent.
By \cite[Corollary~3.16]{GRSS} and Theorem~\ref{Cyclotomic basis conjecture1}, we see that  $\tilde \varphi^f$ sends a basis of  $BC_{r, t}^{f}$ to a basis of $\End_{\OBC_\kappa^f(\delta)}(\downarrow^t\uparrow^r)$, forcing $\tilde \varphi^f$ to be  an isomorphism.
\end{proof}

As explained in \cite{CK}, the proof of Theorem~\ref{scycido} does not depend on the result of a basis of
 $BC_{r, t}^{f}$. Via Theorem~\ref{Cyclotomic basis conjecture1}, it can give a proof of the fact that the set of  all regular monomials of $BC_{r, t}^{f}$ is a basis of $BC_{r, t}^{f}$ when $\kappa$ is an algebraically closed field.  Finally,
one  can use Theorem~\ref{scycido} to give  a presentation of  $\End_{\OBC_\kappa^f(\delta)}(\downarrow^t\uparrow^r)$.

\small
\end{document}